\PassOptionsToPackage{unicode}{hyperref}
\PassOptionsToPackage{hyphens}{url}
\PassOptionsToPackage{dvipsnames,svgnames,x11names}{xcolor}
\documentclass[12pt]{article}
\usepackage{dsfont}
\usepackage{iftex}
\ifPDFTeX
  \usepackage[T1]{fontenc}
  \usepackage[utf8]{inputenc}
  \usepackage{textcomp} % provide euro and other symbols
\else % if luatex or xetex
  \usepackage{unicode-math}
  \defaultfontfeatures{Scale=MatchLowercase}
  \defaultfontfeatures[\rmfamily]{Ligatures=TeX,Scale=1}
\fi
\usepackage{lmodern}
\ifPDFTeX\else  
\fi
\IfFileExists{upquote.sty}{\usepackage{upquote}}{}
\IfFileExists{microtype.sty}{% use microtype if available
  \usepackage[]{microtype}
  \UseMicrotypeSet[protrusion]{basicmath} % disable protrusion for tt fonts
}{}
\makeatletter
\@ifundefined{KOMAClassName}{% if non-KOMA class
  \IfFileExists{parskip.sty}{%
    \usepackage{parskip}
  }{% else
    \setlength{\parindent}{0pt}
    \setlength{\parskip}{6pt plus 2pt minus 1pt}}
}{% if KOMA class
  \KOMAoptions{parskip=half}}
\makeatother
\usepackage{xcolor}
\usepackage{comment}
\makeatletter
\ifx\paragraph\undefined\else
  \let\oldparagraph\paragraph
  \renewcommand{\paragraph}{
    \@ifstar
      \xxxParagraphStar
      \xxxParagraphNoStar
  }
  \newcommand{\xxxParagraphStar}[1]{\oldparagraph*{#1}\mbox{}}
  \newcommand{\xxxParagraphNoStar}[1]{\oldparagraph{#1}\mbox{}}
\fi
\ifx\subparagraph\undefined\else
  \let\oldsubparagraph\subparagraph
  \renewcommand{\subparagraph}{
    \@ifstar
      \xxxSubParagraphStar
      \xxxSubParagraphNoStar
  }
  \newcommand{\xxxSubParagraphStar}[1]{\oldsubparagraph*{#1}\mbox{}}
  \newcommand{\xxxSubParagraphNoStar}[1]{\oldsubparagraph{#1}\mbox{}}
\fi
\makeatother

\usepackage{longtable,booktabs,array}
\usepackage{calc} % for calculating minipage widths
\usepackage{etoolbox}
\makeatletter
\patchcmd\longtable{\par}{\if@noskipsec\mbox{}\fi\par}{}{}
\makeatother
\IfFileExists{footnotehyper.sty}{\usepackage{footnotehyper}}{\usepackage{footnote}}
\makesavenoteenv{longtable}
\usepackage{graphicx}
\makeatletter
\def\maxwidth{\ifdim\Gin@nat@width>\linewidth\linewidth\else\Gin@nat@width\fi}
\def\maxheight{\ifdim\Gin@nat@height>\textheight\textheight\else\Gin@nat@height\fi}
\makeatother
\setkeys{Gin}{width=\maxwidth,height=\maxheight,keepaspectratio}
\makeatletter
\def\fps@figure{htbp}
\makeatother

\makeatletter
\@ifpackageloaded{caption}{}{\usepackage{caption}}
\AtBeginDocument{%
\ifdefined\contentsname
  \renewcommand*\contentsname{Table of contents}
\else
  \newcommand\contentsname{Table of contents}
\fi
\ifdefined\listfigurename
  \renewcommand*\listfigurename{List of Figures}
\else
  \newcommand\listfigurename{List of Figures}
\fi
\ifdefined\listtablename
  \renewcommand*\listtablename{List of Tables}
\else
  \newcommand\listtablename{List of Tables}
\fi
\ifdefined\figurename
  \renewcommand*\figurename{Figure}
\else
  \newcommand\figurename{Figure}
\fi
\ifdefined\tablename
  \renewcommand*\tablename{Table}
\else
  \newcommand\tablename{Table}
\fi
}
\@ifpackageloaded{float}{}{\usepackage{float}}
\floatstyle{ruled}
\@ifundefined{c@chapter}{\newfloat{codelisting}{h}{lop}}{\newfloat{codelisting}{h}{lop}[chapter]}
\floatname{codelisting}{Listing}

\makeatother
\makeatletter
\@ifpackageloaded{caption}{}{\usepackage{caption}}
\@ifpackageloaded{subcaption}{}{\usepackage{subcaption}}
\makeatother

\ifLuaTeX
  \usepackage{selnolig}  % disable illegal ligatures
\fi
\usepackage[]{natbib}
\usepackage{bookmark}

\IfFileExists{xurl.sty}{\usepackage{xurl}}{} % add URL line breaks if available
\hypersetup{
  pdftitle={Title},
  pdfauthor={Author 1; Author 2},
  pdfkeywords={3 to 6 keywords, that do not appear in the title},
  colorlinks=true,
  linkcolor={blue},
  filecolor={Maroon},
  citecolor={Blue},
  urlcolor={Blue},
  pdfcreator={LaTeX via pandoc}}

\newcommand{\anon}{1}
\usepackage{multirow}

\usepackage{amsthm}  % Needed for theorem-like environments

\usepackage{algorithm, algorithmic}

\newtheorem{theorem}{\bf Theorem}[section]
\newtheorem{lemma}{\bf Lemma}[section]

\newtheorem{remark}{\it Remark}[section]
\newtheorem{proposition}{Proposition}
\newtheorem{example}{Example}
\usepackage{float}
\usepackage{tabularx, booktabs, array}

\RequirePackage{amsmath,amsfonts,amssymb, bm,mathtools}
\begin{document}

\def\spacingset#1{\renewcommand{\baselinestretch}%
{#1}\small\normalsize} \spacingset{1}

%%%%%%%%%%%%%%%%%%%%%%%%%%%%%%%%%%%%%%%%%%%%%%%%%%%%%%%%%%%%%%%%%%%%%%%%%%%%%%

\if1\anon
{
  \title{\bf %Beyond Global Marginal Coverage: 
Conformal Prediction Intervals with Tail-Specific Guarantees 
% One-sided conformal prediction intervals
% Beyond Marginal Coverage: Intersecting One-Sided Conformal Intervals for Tail-Specific Guarantees
%Enhancing Conformal Prediction for a More Reliable Global Coverage with Tail-Specific Guarantees
%Intersecting One-sided Conformal Intervals for a More Reliable Global Coverage with Tail-Specific Guarantees
}
  \author{Simone Cuonzo\thanks{
    The authors gratefully acknowledge support by Sapienza Research Grants, Ateneo Piccoli
2024–RP1241905F06F855 and Avvio alla Ricerca 2025-AR125199C38C3858. In
accordance with COPE guidelines and the TITAN Guideline Checklist 2025, we acknowledge the use of ChatGPT and Claude for MAC v1.9659.2 for linguistic editing. No content generation or interpretation was delegated to the tool. The authors remain fully responsible for the manuscript. No artificial intelligence tools have been used for image generation, data
collection, or data analysis.}\hspace{.2cm}\\
    MEMOTEF Department, Sapienza University of Rome\\
    and \\
    Nina Deliu \\
    MEMOTEF Department, Sapienza University of Rome\\
    MRC--Biostatistics Unit, University of Cambridge}
  \maketitle
} \fi

\if0\anon
{
  \bigskip
  \bigskip
  \bigskip
  \begin{center}
    {\LARGE\bf Title}
\end{center}
  \medskip
} \fi

\bigskip
\begin{abstract}
% The text of your abstract. 200 or fewer words.

This paper extends classical conformal frameworks for constructing prediction intervals with {\it global} marginal coverage $1-\alpha$ to intervals that provide explicitly calibrated guarantees for the upper and lower tails separately. 
 Focusing on split conformal prediction, 
we first construct lower and upper one-sided conformal intervals that achieve marginal validity, and then  
derive the induced two-sided interval by intersection. Theoretical results prove both tail-specific and global marginal coverage of the induced two-sided interval. Results are presented first for the exchangeable setting, where coverage has finite-sample guarantees, and then for non-exchangeable data, where guarantees are asymptotic. Simulation studies show that the proposed approach achieves improved directional calibration relative to classical two-sided intervals, especially relevant in skewed data. Finally, the benefit of the proposed framework is showcased in a financial application, where one aims for return maximization while seeking strict control on the left tail.
\end{abstract}

\noindent%
{\it Keywords:} Conformal prediction for skew data, Finite-sample tail coverage, Reliable risk management, Robust conformal prediction, Uncertainty quantification
\vfill

\newpage
\spacingset{1.8} % DON'T change the spacing!

%\vspace*{-.2cm}
\section{Introduction}
\vspace*{-.1cm}

Consider a set of $n$ {\it training} data $\mathcal{D}_n = {(X_i, Y_i)}_{i=1}^{n}$, where each $i$-th pair consists of a feature vector $X_i \in \mathbb{R}^p$, $p \ge 1$, and an associated scalar response $Y_i \in \mathcal{Y} \subseteq \mathbb{R}$. Let $X_{n+1}$ denote a (new) {\it test} feature. Having observed $\mathcal{D}_n$ and $X_{n+1}$, the objective is to construct a prediction interval $\mathcal{C}_{n+1} \coloneq \mathcal{C}(X_{n+1}) \subseteq \mathcal{Y}$ that is likely to contain, or ``{\it cover}'', the true value of the unseen test response $Y_{n+1}$. Conformal prediction (CP) offers a {\it valid} yet flexible distribution-free framework for this predictive inference problem, under the sole assumption of {\it exchangeability}~\citep{vovk2005algorithmic, lei2018distribution}. Specifically, if $(X_1,Y_1),\ldots,(X_{n+1},Y_{n+1})$ are exchangeable, a CP interval is guaranteed to satisfy
\begin{equation}
\mathbb{P}\left( Y_{n+1} \in \mathcal{C}_{n+1} \right) \ge 1 - \alpha,\qquad \alpha \in (0,1),
\label{eq:validity}
\end{equation}
where the probability is taken over the joint distribution of the training data and the test observation, in line with the frequentist philosophy. For a comparison with prediction intervals under the Bayesian paradigm, see~\cite{deliu2026interplay}. This frequentist guarantee is often referred to as {\it marginal coverage} or {\it statistical validity}. Importantly, this property holds in {\it finite samples}, without assumptions on the form of the underlying distribution. Moreover, it remains compatible with arbitrarily complex predictive models, including modern regression models as well as machine learning (ML) algorithms. 

The versatility of CP has contributed to its growing popularity as a tool to quantify uncertainty in modern statistical practice. Yet, standard CP methods are not without limitations. For example, the marginal guarantee in Eq.~\eqref{eq:validity} does not extend to conditional coverage given $X_{n+1}$ and exact conditional coverage is impossible without further (distributional) assumptions \citep{ lei2014distribution, barber2021limits}. The most practical remedies rely on adaptive nonconformity scores, such as {\it conformalized quantile regression}~\citep[CQR;][]{romano2019conformalized}, which produce intervals that are more responsive to local heteroscedasticity. Another challenge concerns the exchangeability assumption, which fails in many realistic settings, ranging from covariate shift \citep{tibshirani2019conformal, qin2025distribution} to temporal dependence~\citep{gibbs2021adaptive}; more general forms of distribution drift and corresponding remedies are discussed in \citet{barber2023conformal}.

The challenges outlined above all concern a {\it global} or average form of coverage of the entire response range, with no regard for direction. In this work, we identify and address a complementary challenge: in many real-world prediction and decision problems, the two tails of the predictive distribution are inherently asymmetric and carry different consequences. Two paradigmatic examples illustrate this point.
%Therefore, both tails deserve their own guarantee, not just overall coverage. 
\begin{example} In portfolio management, an investor seeks to maximize returns while requiring explicit protection against extreme losses. Both positive and negative deviations from the expected return must be characterized, but the downside carries far more severe consequences, demanding stricter probabilistic guarantees. Value-at-Risk formalizes this asymmetry by focusing on the left tail of the distribution~\citep{mcneil2015quantitative}. Two-sided risk measures extend this idea to jointly consider the two tails~\citep{chen2008two, farinelli2008beyond}.
\end{example}

\begin{example} A similar asymmetry arises in medicine: when monitoring a patient's blood pressure, a clinician must guard against both extremes, which reflect hypotension and hypertension; yet, the emphasis often has a direction. In post-surgical hypertensive patients, for instance, the primary concern is to detect dangerous spikes above a safety threshold, as values that are too high may trigger stroke. %In both settings, the decision-maker cares about both tails, but one tail demands a stricter probabilistic guarantee. Often, string probabilistic control is needed over both tails.
Analogously, in early-phase clinical trials, a new treatment must satisfy simultaneous constraints on toxicity and efficacy, where the two thresholds carry inherently different stakes~\citep{thall2004dose}. In all such settings, the decision-maker must account for both tails, but each demands its own probabilistic guarantee.
%\textcolor{red}{valuare anche questo articolo per toxicity-efficacy in phase 1-2 trials: https://jmlr.org/papers/v22/19-228.html}
\end{example}

Common methods for constructing prediction intervals, including the CP framework, do not directly respond to this two-directional objective. They distribute coverage across both tails, seeking for the {\it marginal guarantee} in Eq.~\eqref{eq:validity}, which ensures that the prediction interval contains the true response with high probability on average, but provides no explicit control over individual tail probabilities. In particular, it does not distinguish between coverage failures in the lower tail and those in the upper tail, leaving directional calibration unaddressed. This motivates the development of more robust prediction intervals that deliver separate and explicitly calibrated guarantees for each tail, allowing practitioners to assign stricter coverage requirements where the cost of error is more severe.

\subsection{Specific Contributions and Organization}

This work proposes a CP framework that strengthens classical marginal coverage with explicitly calibrated guarantees for the upper and lower tails of $Y_{n+1}$ separately. Focusing on split CP~\citep{vovk2005algorithmic}, this is achieved by constructing one-sided CP intervals for each tail and recovering a two-sided interval through their intersection. The specific contributions of this work are fourfold.
\begin{description}
\item[C1] We introduce a stronger notion of marginal validity with \textit{tail-specific coverage} (Section~\ref{sec: preliminaries}) and propose a general CP procedure that allows explicit control on the left and right tails with user defined miscoverage rates $\alpha^-$ and $\alpha^+$, respectively (Section~\ref{sec:onesided_split_cp}). We present a variation of the quantile score that retains the sign of the residual relative to the estimated quantile, in contrast to the truncated version used in standard CQR.

\item[C2] In Section~\ref{sec: theory}, we establish theoretical guarantees for the proposed CP framework, proving finite-sample tail-specific and marginal coverage under exchangeability (Section~\ref{sec: theory_exch}), and asymptotic counterparts under non-exchangeable data (Section~\ref{sec: theory_nonexch}).
\item[C3] We illustrate the empirical benefits of the proposed framework through simulation studies (Section~\ref{sec: sim_studies}), demonstrating improved directional calibration relative to classical two-sided CP intervals, with particular gains in skewed and heavy-tailed settings, and show the advantage of the signed quantile score over its truncated counterpart.  
\item[C4] In Section~\ref{sec: app}, we showcase the framework in a financial application ({\bf Example 1}), where the asymmetry between downside risk and upside return motivates strict probabilistic control over the left tail alongside a robust indication of potential gains.
\end{description}

\section{Preliminaries} \label{sec: preliminaries}

\subsection{Problem Statement}

Given a training set $\mathcal{D}_n = {(X_i, Y_i)}_{i=1}^{n}$ %with each $X_i \times Y_i \in \mathbb{R}^p \times \mathbb{R}$, 
and a test feature $X_{n+1}$, our goal is to construct a prediction interval for the new response $Y_{n+1}$, say $\mathcal{C}_{n+1} \subseteq \mathcal{Y}$, that not only satisfies the marginal coverage guarantee of Eq.~\eqref{eq:validity}, but also provides explicitly calibrated guarantees on each tail of the distribution of $Y_{n+1}$. Assuming $\mathcal{C}_{n+1}$ takes the form of a single interval $\mathcal{C}_{n+1} = [L_{n+1}, U_{n+1}]$ -- which holds whenever the response variable has a unimodal distribution, but also under less restrictive conditions that will be discussed in Section~\ref{sec:onesided_split_cp} -- the joint set of guarantees we seek is
\begin{align}
\mathbb{P}\left( Y_{n+1} \in [L_{n+1}, U_{n+1}] \right) &\ge 1 - \alpha, \tag{marginal coverage}\\
\mathbb{P}\left( Y_{n+1} > U_{n+1}\right) \le \alpha^+\qquad \text{and}\qquad &\mathbb{P}\left( Y_{n+1} < L_{n+1}\right) \le \alpha^-, \tag{tail-specific coverage}
\end{align}
with $\alpha^-+\alpha^+ = \alpha \in (0,1)$. The tail-specific coverage condition is strictly stronger than marginal coverage: it decomposes the total miscoverage $\alpha$ into directional components, ensuring that no tail is systematically under-protected.

This objective relates to a classical distinction between {\it equal-tailed intervals}, which allocate $\alpha/2$ to each tail, {\it highest density regions}, which yield the shortest interval at a given coverage level~\citep{hyndman1996computing, deliu2024alternative}, and {\it intervals symmetric} around a point predictor. The tail-specific intervals proposed here occupy a different point in this landscape: they target directional reliability, allowing the assignment of asymmetric miscoverages, $\alpha^-$ and $\alpha^+$, to reflect the asymmetric consequences of errors in each tail. This is particularly valuable when the response is skewed or heavy-tailed, settings where a single global coverage conceals meaningful asymmetries between the two tails.

In what follows, we first review the standard split CP framework in both the exchangeable and non-exchangeable setting, before developing the proposed tail-specific construction.

\subsection{Background on Split Conformal Prediction} \label{sec: split-CP}

There are two main approaches to implement conformal prediction: full CP and split CP~\citep{vovk2005algorithmic, fontana2023conformal}. While the former is computationally demanding and consequentially rarely applicable in many practical settings, split CP proposes a computationally efficient procedure that decouples model training and calibration in a similar flavor to the validation-set approach~\citep{james_witten_hastie_tibshirani2021}. 
The most common approach for implementing CP is known as the {\it split CP}~\citep{vovk2005algorithmic}, which can be summarised as follows. The available data are partitioned into two disjoint subsets, $\mathcal{D}_{n_{t}}$, a proper training set of cardinality $n_{t}$ on which a fitted model $\widehat{f}_{n_{t}}$ is obtained, and $\mathcal{D}_{n_{c}}$, a calibration set of size $n_{c}$ which is used to compute the so-called calibration or conformity scores. The central feature of CP, compared to, e.g., asymptotic prediction intervals or resampling methods, is, in fact, the use of a \textit{(non-)conformity score function}, say $s\!: \mathcal{X} \times \mathcal{Y} \to \mathbb{R}$, that quantifies the (dis-)similarity or {\it (non-)conformity} of any point $(X, Y)$ to an observed sample $\mathcal{D}$. In a regression setting, it can be thought of as a residual function: $s_{n_{t}}((x, y)) = |\widehat{f}_{n_{t}}(x) - y|$, with $\widehat{f}_{n_{t}}$ a fitted regression model on $\mathcal{D}_{n_{t}}$.

An important property of a conformity function is that it is symmetric in $\mathcal{D}$, i.e., the conformity for $(x, y)$ is invariant to any permutation of the elements of $\mathcal{D}$. By leveraging the symmetry of $s$ together with the {\it exchangeability} of the $(X_i, Y_i) \in \mathcal{D}_{n_{c}}$, conformity scores computed on $\mathcal{D}_{n_{c}}$ with respect to a model fitted on $\mathcal{D}_{n_{t}}$, i.e.,
\begin{align*}
    S_i = s_{n_{t}}\big((X_i, Y_i)\big),\quad i = 1,\dots, n_{c},
\end{align*}
are themselves exchangeable, ensuring validity. For a non-conformity score function $s$ (e.g., residual function), split CP intervals are obtained as
\begin{equation*}
\mathcal{C}_{n+1,\alpha}
=
\big\{y: s_{n_{t}}\big((X_{n+1}, y)\big) \leq Q_{\alpha, n_{c}}
\big\},
\label{eq:split_cp_general}
\end{equation*}
where $Q_{\alpha, n_{c}}$ denotes the empirical
$(1-\alpha)(1+1/n_{c})$-quantile of the $n_{c}$ calibration scores $S_1,\dots, S_{n_{c}}$. The $(1 + 1/n_{c})$ adjustment enables finite-sample guarantees in Eq.~\eqref{eq:validity}.

While any conformity score ensures validity, their choice plays a crucial role in determining the efficiency and adaptivity of the resulting CP intervals. For example, residual scores yield prediction intervals that are symmetric around the point predictor $\widehat f_{n_{t}}$, implicitly assuming homoscedasticity. Standardized residual (S-residual) scores address this by rescaling residuals according to their conditional standard deviation estimate $\widehat{\sigma}_{n_{t}}(x)$, producing intervals that adapt their width to the local variability of $Y|X=x$. Nevertheless, while adapting to heteroscedasticity, both constructions remain symmetric around $\widehat f_{n_{t}}$ and cannot capture asymmetries in the conditional distribution of $Y|X=x$, potentially leading to systematic under-coverage in one tail. As a solution, \textit{Conformalised Quantile Regression}~\citep[CQR;][]{romano2019conformalized} uses a conformity score constructed on estimated conditional quantiles for both the upper $q_{1-\frac{\alpha}{2}, n_{t}}$ and lower $q_{\frac{\alpha}{2}, n_{t}}$ tails; this is technically characterized in Supplementary material A. Table~\ref{tab:cp_scores}
summarizes these three popular conformity scores, together with the corresponding prediction intervals.

\begin{table}[htbp]
\centering
\caption{Popular conformity scores in CP and associated split CP intervals.}
\label{tab:cp_scores}
\small
\begin{tabularx}{\textwidth}{>{\setlength{\baselineskip}{0.8\baselineskip}}p{0.16\textwidth}
                              >{\setlength{\baselineskip}{0.8\baselineskip}}X
                              >{\setlength{\baselineskip}{0.8\baselineskip}}p{0.55\textwidth}}
\toprule
\textbf{Score name} & \textbf{Score definition 
} & \textbf{Associated Split CP Interval} \\
\hline
Residual $s^{\text{res}}$
&
$  
| \widehat f_{n_{t}}(x) - y |$
&
$\mathcal{C}^{\text{res}}_{\alpha}(x) = \widehat f_{n_{t}}(x) \pm Q^{\text{res}}_{\alpha, n_{c}}$
\\[6pt]

S-residual $s^{\text{s-res}}$
&
$%s^{\text{s-res}}_{n_{t}}\big((x, y)\big) = 
\dfrac{| \widehat f_{n_{t}}(x) - y |}{\widehat{\sigma}_{n_{t}}(x)}$
&
$\mathcal{C}^{\text{s-res}}_{\alpha}(x) = \widehat f_{n_{t}}(x) \pm \widehat{\sigma}_{n_{t}}(x)\, Q^{\text{s-res}}_{\alpha, n_{c}}$
\\[8pt]

Quantile $s^{\text{q}}$
& 
$\max\!\Big\{$\parbox[t]{0.20\textwidth}{$\widehat q_{\frac{\alpha}{2}, n_{t}}(x) - y,$\\[-3pt]
\hspace{0.5em}$y - \widehat q_{1-\frac{\alpha}{2}, n_{t}}(x)\Big\}$}
&
$\mathcal{C}^{\text{q}}_{\alpha}(x) = \big[
\widehat q_{\frac{\alpha}{2}, n_{t}}(x) - Q^{\text{q}}_{\alpha, n_{c}},\;
\widehat q_{1-\frac{\alpha}{2}, n_{t}}(x) + Q^{\text{q}}_{\alpha, n_{c}}
\big]$
\\
\hline
\end{tabularx}
\end{table}

\subsection{Adaptive Conformal Inference in Non-exchangeable Data}
The finite-sample validity guarantees established in the previous section rely
critically on the exchangeability of the data (specifically, the calibration set). However, this
assumption is typically violated in the presence of temporal dependence or distributional shifts.
In such settings, exact marginal coverage can no longer be ensured with standard CP.

To overcome this limitation, \citet{gibbs2021adaptive} introduce the
\emph{Adaptive Conformal Inference} (ACI) framework, which replaces the fixed
miscoverage level $\alpha$ with an adaptive sequence of $\alpha_i$'s that is updated online so as to track the desired long-run violation rate $\alpha$. 
Given an observed set of $n$ data points $\mathcal{D}_n$, for each new pair observed $\{(X_{i},Y_{i})\}_{i\geq n+1}$, one computes the miscoverage or error indicator $\mathrm{err}_{i} := \mathbf{1}\{Y_{i} \notin \mathcal{C}_{i,\alpha_i}\}$, with $\mathcal{C}_{i,\alpha_i}$ a standard CP interval for $X_i$ at the level $1-\alpha_i$, and updates the miscoverage level according to the adaptive recursion
\begin{equation*}
\alpha_{i+1}
=
\alpha_{i}
+
\gamma\bigl(\alpha - \mathrm{err}_{i}\bigr),
\label{eq:aci_lower_general}
\end{equation*}
where $\gamma > 0$ is a step-size parameter controlling the speed of adaptation,
and $\alpha \in (0,1)$ is the target long-run violation rate. 
In this way, the sequence $\{\alpha_i\}_{i \geq n+1}$ is recursively adapted online so as to track the target miscoverage level $\alpha$, using all data pairs $\{(X_j,Y_j)\}_{j=1}^i$.
A key property of the ACI recursion is that it guarantees asymptotic control
of the empirical miscoverage frequency without requiring exchangeability~\citep{gibbs2021adaptive}.

A key limitation of the ACI framework lies in the selection of the learning-rate parameter $\gamma$, which may heavily affect performance in practice: small values of $\gamma$ yield stable but slow adaptation, whereas larger values enable faster tracking at the cost of increased variability. To address this issue, \citet{zaffran2022adaptive} and \citet{gibbs2024conformal} propose the \emph{Online Expert Aggregation on ACI} (AgACI) and the \emph{Dynamically-tuned Adaptive Conformal Inference} (DtACI), respectively, which eliminate the need to specify a single learning rate by aggregating a set of $k$ ACI instances run in parallel. For example, in DtACI each ACI instance is associated with a candidate learning rate $\gamma_j$, $j=1,\dots,k$, and produces its own sequence of adaptive
miscoverage levels $\{\alpha_i^{j}\}_{i\ge n+1}$. 
The aggregation is then performed through an exponential weighting scheme, where, for each $i \geq n+1$, each candidate $j$ is assigned a
weight $w_i^j$ that reflects its past predictive performance. %as measured by the pinball loss. 
These weights are normalized to form a probability distribution
\[
p_i^j := \frac{w_i^j}{\sum_{l=1}^k w_i^l}, \qquad j=1,\dots,k,\qquad i\ge n+1,
\]
which is then used to combine the candidate predictors. %Given these probabilities, 
The algorithm selects $\alpha_i = \alpha_i^j$ with probability $p_i^j$. Equivalently, one can consider the mixture induced by $\{p_i^j\}_{j=1}^k$ as defining an aggregated adaptive miscoverage level given by $\bar{\alpha}_i := \sum_{j=1}^k p_i^j \,\alpha_i^j$. The resulting value $\alpha_i$ (or $\bar{\alpha}_i$) is then used to construct the
conformal prediction set at time $i$.

After observing $(X_i, Y_i)$, the performance of each candidate is evaluated via the
pinball loss $\ell_\alpha(\beta_i,\alpha_i^j)
:=
\alpha(\beta_i-\alpha_i^j)-\min\{0,\beta_i-\alpha_i^j\}$, where $\beta_i
:=
\sup\Bigl\{\beta\in[0,1]:\, Y_i\in \mathcal{C}_{i,\beta}\Bigr\}$ denotes the largest miscoverage level for which $Y_{i+1}$ is contained in the prediction set. The weights
are then updated according to the exponential reweighting rule
\[
\tilde{w}_{i+1}^j
=
w_i^j \exp\! \bigl(-\eta\,\ell(\beta_i,\alpha_i^j)\bigr),\qquad i\ge n+1,
\]
where $\eta>0$ is a learning parameter controlling the sensitivity to losses. 

In parallel, each candidate ACI instance is updated independently according to
\[
\alpha_{i+1}^j
=
\alpha_i^j
+
\gamma_j\bigl(\alpha- \mathrm{err}_i^j\bigr)\qquad i\ge n+1,
\]
where $\mathrm{err}_i^j := \mathbf{1}\{Y_i \notin C_{i,\alpha_i^j}\}$
denotes the miscoverage indicator associated with candidate $j$. We initialize $\alpha^j_{n+1}=\alpha$ and $w_{n+1}^j=1$ for all $j=1,\dots,k$.

This construction preserves the flexibility of the original ACI framework while providing robustness with respect to the choice of the learning-rate parameter. Importantly, in addition to ensuring asymptotic marginal coverage, DtACI also enjoys explicit regret guarantees (not available for ACI or AgACI) with respect to the best fixed learning rate. Therefore, among the adaptive procedures, in this work, we focus on ACI and DtACI, for their coverage and regret guarantees. We also recall that all adaptive procedures remain agnostic to the specific conformity score, allowing for any choice of preference.

\section{Achieving Tail-specific Guarantees in CP} \label{sec:onesided_split_cp}

We now show that the construction of a CP interval with both marginal and tail-specific guarantees can be achieved in two steps: (i) first, one-sided CP intervals of the form $(-\infty,U]$ and $[L,+\infty)$, with guarantees on each tail, respectively, are derived; (ii) a two-sided interval is recovered by their intersection as $[L,U]$. We assume and consider conformity score functions that are quasi-convex in $y$, a sufficient condition for an interval to be a single interval of the forms $(-\infty,U]$, $[L,+\infty)$, or $[L,U]$, rather than a union of disjoint intervals. 

To obtain (i) one-sided intervals, one can simply adjust the conformity scores in Table~\ref{tab:cp_scores}. Consider, for example, the one-sided quantile score with
(lower-tail) miscoverage level $\alpha$
\begin{align} \label{eq: cqr_onesided}
   s^{{\rm q},{\rm L}}(x,y)
   =
   \max\!\left\{
   \hat{\rm q}_{\alpha,\,n_{t}}(x)-y,
   0
   \right\}.
\end{align}
This leads to a corresponding $1-\alpha$ {\it lower-bounded CP interval}
$\mathcal C^{\rm q,L}$ for a new feature $X_{n+1}$
\begin{equation}
\mathcal C^{\rm q,L}_{n+1,\alpha}
:=
\left\{
y\in\mathbb R:
s^{{\rm q},{\rm L}}(X_{n+1},y)
\le
Q^{\rm q,L}_{\alpha,n_{c}}
\right\},
\label{eq:onesided_set_def_general_max}
\end{equation}
where $Q^{\rm q,L}_{\alpha,n_{c}}$ denotes the empirical
$(1-\alpha)(1+1/n_{c})$-quantile of the calibration scores
$\{S_i^{\rm q,L},\, i\in\mathcal D_{n_{c}}\}$.
Since the conformity scores are non-negative, we necessarily have
$Q^{\rm q,L}_{\alpha,n_{c}}\ge 0$. This implies that the defining inequality in Eq.~\eqref{eq:onesided_set_def_general_max} can be rearranged as
\[
%s^{{\rm q},{\rm L}}(X_{n+1},y) \le Q^{\rm q,L}_{\alpha,n_{c}} \quad\Longleftrightarrow\quad 
\max\!\left\{
\hat q_{\alpha,n_{t}}(X_{n+1})-y,
0
\right\}
\le
Q^{\rm q,L}_{\alpha,n_{c}}\quad\Longleftrightarrow\quad
\begin{cases}
    y \ge \hat q_{\alpha,n_{t}}(X_{n+1}) - Q^{\rm q,L}_{\alpha,n_{c}} & \text{if}\ \hat q_{\alpha,n_{t}}(X_{n+1}) > y \\ 
    y \ge -\infty & \text{if}\ \hat q_{\alpha,n_{t}}(X_{n+1}) \leq y
\end{cases}.
\]
Taken jointly, the resulting prediction set admits the explicit representation
\begin{equation*}
\mathcal C^{\rm q,L}_{n+1,\alpha}
=
[L_{n+1}^{\rm q},\infty),
\qquad\text{with}\qquad
L_{n+1}^{\rm q}
:= \hat q_{\alpha,n_{t}}(X_{n+1}) -
Q^{\rm q,L}_{\alpha,n_{c}}.
\end{equation*}

\begin{comment}
\begin{equation*}
\mathcal C^{\rm q,L}_{n+1,\alpha}
=
[L_{n+1}^{\rm q},\infty),
\qquad\text{with}\qquad
L_{n+1}^{\rm q}
:=
\begin{cases}
\hat q_{\alpha,n_{t}}(X_{n+1}) -
Q^{\rm q,L}_{\alpha,n_{c}} & \text{if}\ \hat q_{\alpha,n_{t}}(X_{n+1}) > y \\ 
- \infty & \text{if}\ \hat q_{\alpha,n_{t}}(X_{n+1}) \leq y
\end{cases}.
\end{equation*}
\end{comment}
The $1-\alpha$ {\it upper-bounded CP interval} is obtained in a similar fashion.

\begin{remark}
One-sided CQR can be interpreted as a degenerate case of standard two-sided CQR, with one of the two quantile predictors set to $\pm \infty$, therefore neglecting one of the two sides. The two-sided CQR score can be then re-written as
\begin{align*}
s^{\rm q}(x,y)&=\max\{s^{\rm q,L}(x,y),\,s^{\rm q,U}(x,y)\}\\ &= \max\{\max\!\left\{
   \hat{q}_{\alpha,\,n_{t}}(x)-y,
   0
   \right\},\,\max\!\left\{
   y-\hat{q}_{1-\alpha,\,n_{t}}(x),
   0
   \right\}\}\\
   & = \begin{cases}
       \max\{0,\,\max\!\left\{
   y-\hat{q}_{1-\alpha,\,n_{t}}(x),
   0
   \right\}\} = s^{\rm q,U}& \text{if}\ \hat{q}_{\alpha,\,n_{t}}(x) \to -\infty\\
   \max\{\max\!\left\{
   \hat{q}_{\alpha,\,n_{t}}(x)-y,
   0
   \right\},\,0\} =  s^{\rm q,L} & \text{if}\ \hat{q}_{1-\alpha,\,n_{t}}(x) \to \infty
   \end{cases}.
\end{align*}
%accounts for both upper and lower quantile scores to control the coverage globally on both tails, one-sided CQR retains only one of the two components, corresponding %in general (for small $\alpha$ and non-degenerate scores) 
%to either $s^{q,L} = \max\{s^{q,L}, 0\}$ or $s^{q,U} = \max\{0,\,s^{q,U}\}$
 %This translates into intervals 
%using $s^{q,L}$ alone ignores upper-tail violations, whereas using $s^{q,U}$ alone ignores lower-tail violations. Hence, 
%with the omitted side becoming unconstrained, so the prediction set extends to $+\infty$ or $-\infty$, yielding sets 
%of the form $[L_{n+1}^{q},+\infty)$ or $(-\infty,U_{n+1}^{q}]$, respectively.
\end{remark}

As an alternative version of the quantile score in Eq.~\eqref{eq: cqr_onesided}, one could consider $s^{{\rm q\text{-}sgn},{\rm L}}(x,y) = \hat{q}_{\alpha,\,n_{t}}(x)-y$ and $s^{{\rm q\text{-}sgn},{\rm U}}(x,y) = y - \hat{q}_{1-\alpha,\,n_{t}}(x)$, which are still valid conformity scores, but lead to more efficient prediction intervals, as later discussed in the theoretical and empirical results. 
In Table~\ref{tab:onesided_cp_scores}, we report the corresponding lower- and upper-bounded CP intervals for this variant, along with the one-sided residual-based scores of interest defined in Table~\ref{tab:cp_scores}. %Since the one-sided quantile score has been defined in Eq.~\eqref{eq: cqr_onesided}, in Table except for the quantile score, for which ~\ref{tab:onesided_cp_scores}, we only report the non-degenerate variant.

\begin{table}[htbp]
\centering
\caption{One-sided conformity scores and associated lower/upper bounded split CP intervals.}
\label{tab:onesided_cp_scores}
\small
\renewcommand{\arraystretch}{1.2}
\setlength{\tabcolsep}{4pt}

\begin{tabularx}{\textwidth}{
>{\raggedright\arraybackslash}p{0.155\textwidth}
>{\centering\arraybackslash}p{0.03\textwidth}
>{\centering\arraybackslash}X
>{\centering\arraybackslash}p{0.45\textwidth}}
\toprule

\textbf{Score name} 
& \textbf{Tail} 
& \textbf{Score definition} 
& \textbf{One-sided split CP interval} \\

\midrule

Residual $s^{\rm res}$
& {\rm L}
& $s^{{\rm res},{\rm L}}(x,y)=\hat f_{n_{t}}(x)-y$
& $\mathcal C^{\rm res,L}_{\alpha}(x)
=\bigl[\hat f_{n_{t}}(x)-Q^{\rm res,L}_{\alpha,n_{c}},+\infty\bigr)$
\\

& {\rm U}
& $s^{{\rm res},{\rm U}}(x,y)=y-\hat f_{n_{t}}(x)$
& $\mathcal C^{\rm res,U}_{\alpha}(x)
=\bigl(-\infty,\hat f_{n_{t}}(x)+Q^{\rm res,U}_{\alpha,n_{c}}\bigr]$
\\[6pt]

S-residual $s^{\rm s\text{-}res}$
& {\rm L}
& $s^{{\rm s\text{-}res},{\rm L}}(x,y)
=\dfrac{\hat f_{n_{t}}(x)-y}{\hat\sigma_{n_{t}}(x)}$
& $\mathcal C^{\rm s\text{-}res,L}_{\alpha}(x)
=\bigl[\hat f_{n_{t}}(x)
-\hat\sigma_{n_{t}}(x)Q^{\rm s\text{-}res,L}_{\alpha,n_{c}},
+\infty\bigr)$
\\

& {\rm U}
& $s^{{\rm s\text{-}res},{\rm U}}(x,y)
=\dfrac{y-\hat f_{n_{t}}(x)}{\hat\sigma_{n_{t}}(x)}$
& $\mathcal C^{\rm s\text{-}res,U}_{\alpha}(x)
=\bigl(-\infty,
\hat f_{n_{t}}(x)
+\hat\sigma_{n_{t}}(x)Q^{\rm s\text{-}res,U}_{\alpha,n_{c}}\bigr]$
\\[6pt]

Quantile $s^{\rm q\text{-}sgn}$
& {\rm L}
& $s^{{\rm q\text{-}sgn},{\rm L}}(x,y)
=\hat q_{\alpha,n_{t}}(x)-y$
& $\mathcal C^{\rm q\text{-}sgn,L}_{\alpha}(x)
=\bigl[\hat q_{\alpha,n_{t}}(x)-Q^{\rm q\text{-}sgn,L}_{\alpha,n_{c}},
+\infty\bigr)$
\\

& {\rm U}
& $s^{{\rm q\text{-}sgn},{\rm U}}(x,y)
=y-\hat q_{1-\alpha,n_{t}}(x)$
& $\mathcal C^{\rm q\text{-}sgn,U}_{\alpha}(x)
=\bigl(-\infty,
\hat q_{1-\alpha,n_{t}}(x)
+Q^{\rm q\text{-}sgn,U}_{\alpha,n_{c}}\bigr]$
\\

\bottomrule
\end{tabularx}
\end{table}

The ACI framework for non-exchangeable data can be extended naturally to the one-sided setting, by simply applying the
ACI update separately to each tail. Specifically, define now the directional miscoverage indicators $\mathrm{err}_{i}^- := \mathbf{1}\{Y_{i} \notin \mathcal{C}^L_{i,\alpha^-_i}\}$ and $\mathrm{err}_{i}^+ := \mathbf{1}\{Y_{i} \notin \mathcal{C}^U_{i,\alpha^+_i}\}$ for $i\geq n+1$, where $\alpha^+$ and $\alpha^-$ denote the target violation rate for each tail, set e.g., at $\alpha/2$ in order to recover a global miscoverage level
$\alpha$. Taken compactly, the adaptive miscoverage levels are then updated via two independent ACI
recursions as
\begin{equation*}
\alpha_{i+1}^\pm
= \alpha_{i}^\pm + \gamma^\pm\bigl(\alpha^\pm - \mathrm{err}_{i}^\pm\bigr),\quad i\geq n+1.
\label{eq:aci_onesided}
\end{equation*}

Similarly, DtACI is extended by maintaining separate collections of candidate lower/upper
 learning rates $\{\gamma_j^\pm\}_{j=1}^k$, with corresponding sequences of adaptive miscoverage levels $\{\alpha_i^{j,\pm}\}_{j=1}^k$, for $i\geq n+1$. 
 
 \begin{comment}
     
 Here, tail-specific weights $w_i^{j,\pm}$ are obtained based on the pinball loss
$\ell(\beta_i^\pm,\alpha_i^{j,\pm})$, where $\beta_i^\pm:=
\sup\Bigl\{\beta^\pm\in[0,1]:\, Y_i\in \mathcal{C}_{i,\beta^\pm}^{\pm}\Bigr\}$ denotes the largest
miscoverage level such that $Y_i$ is contained in the corresponding one-sided
prediction set.\\
As in the two-sided case, the weights are normalized to obtain probability
distributions
\[
p_i^{j,\pm} := \frac{w_i^{j,\pm}}{\sum_{l=1}^k w_i^{l,\pm}},
\]
which define mixtures over the candidate predictors. Accordingly, we consider
the aggregated adaptive miscoverage levels
\[
\bar{\alpha}_i^\pm := \sum_{l=1}^k p_i^{l,\pm}\,\alpha_i^{l,\pm},
\]
which are used to construct the one-sided prediction sets.

Each candidate is then updated independently via the one-sided analogue of the
ACI recursion,
\[
\alpha_{i+1}^{j,\pm}
=
\alpha_i^{j,\pm}
+
\gamma_j^\pm\bigl(\alpha^\pm - \mathrm{err}_i^{j,\pm}\bigr), \quad i \geq n+1
\]
with $\mathrm{err}_i^{j,\pm}$ defined analogously to the ACI case.
\end{comment}

This decoupled construction highlights an important structural property: the control of lower or upper tail miscoverage is treated through two independent online calibration problems. As a result, the method inherits both the adaptive nature of conformal inference and the regret guarantees of DtACI, while also ensuring long-run directional control of tail-specific miscoverage rates even under nonstationarity. This tail-specific control is particularly advantageous in asymmetric and heavy-tailed settings, where the lower and upper tails %experience markedly different pressures induced by the data, and therefore 
may benefit from a targeted adaptation mechanism.

%This formulation reveals that the one-sided DtACI procedure can be interpreted as running two independent online learning problems—one for each tail—each of which adaptively combines a family of ACI experts. 

%\paragraph*{Two-sided intervals via intersection of one-sided conformal sets}

%Standard conformal prediction constructs two-sided prediction intervals by directly calibrating a single miscoverage level $\alpha$, yielding intervals of the form $\mathcal{C}_{i,\alpha}$ that guarantee marginal coverage $\mathbb{P}\{Y_i \in \mathcal{C}_{i,\alpha}\} \ge 1-\alpha$.

%While this ensures valid global marginal coverage, it does not provide any control on the distribution of errors across the lower and upper tails. In particular, the resulting procedure may exhibit asymmetric miscoverage, with violations concentrated disproportionately in one tail.

This property motivates a natural construction of tail-calibrated two-sided CP intervals by
combining the two independently built one-sided CP intervals.
Let $\mathcal C^L_{n+1,\alpha^-}$ and $\mathcal C^U_{n+1,\alpha^+}$ denote the lower and upper CP intervals for the response $Y_{n+1}$ at the target levels $\alpha^-$ and $\alpha^+$, respectively. Then, defining $\underline{\alpha} := (\alpha^-, \alpha^+)$, the two-sided prediction interval can be derived as the intersection
\begin{equation}
\mathcal C_{n+1,\underline{\alpha}}^{\cap}
: =
\mathcal C_{n+1,\alpha^-}^{L}
\;\cap\;
\mathcal C_{n+1,\alpha^+}^{U} =[L_{n+1},U_{n+1}].
\label{eq:intersection_interval}
\end{equation}

In the non-exchangeable adaptive CP setting, the construction
extends through the adaptive lower and upper miscoverage levels
$\alpha_{n+1}^-$ and $\alpha_{n+1}^+$. Defining $\underline{\alpha}_{n+1}
:=
(\alpha_{n+1}^-,\alpha_{n+1}^+)$, the resulting intersection interval is given by $\mathcal C_{n+1,\underline{\alpha}_{n+1}}^{\cap}
:=
\mathcal C_{n+1,\alpha_{n+1}^-}^{L}
\cap
\mathcal C_{n+1,\alpha_{n+1}^+}^{U}$. The full procedure is given in the Supplementary material B (Algorithm 1) for DtACI. ACI and the non-adaptive construction (for exchangeable data) can be recovered as a special case, by fixing $\gamma^\pm_j = \gamma^\pm$ and $\gamma^\pm_j = \gamma^\pm$ as well as $\alpha_i^\pm = \alpha^\pm$, respectively.

Two-sided CP intervals constructed by intersection simultaneously satisfy: (a) global marginal coverage at target level $1-\alpha$, with $\alpha=\alpha^+ +\alpha^-$ and
(b) tail-specific marginal coverage guarantees at the levels $1-\alpha^-$ and $1-\alpha^+$, as proved in Section~\ref{sec: theory}.

\section{Theoretical Results} \label{sec: theory}
We now establish finite-sample coverage guarantees under exchangeability and asymptotic coverage guarantees under non-exchangeability along with short-term regret bounds, both at the global marginal level (Theorems) and tail-specific level (Propositions).

\subsection*{Exchangeable case} \label{sec: theory_exch}

\begin{proposition}[Tail-specific coverage guarantees]
\label{thm:onesided_validity_three_families}
Under exchangeability, the lower/upper one-sided split CP intervals in Table~\ref{tab:onesided_cp_scores} satisfy tail-specific $1-\alpha^-$ / $1-\alpha^+$ coverage guarantees:
\begin{equation}
\mathbb P\!\left(Y_{n+1}\in\mathcal C_{n+1,\alpha^-}^L\right)\ge 1-\alpha^-,
\qquad
\mathbb P\!\left(Y_{n+1}\in\mathcal C_{n+1,\alpha^+}^{U}\right)\ge 1-\alpha^+.
\label{eq:onesided_lower_bounds_general}
\end{equation}
Moreover, if the corresponding scores are almost surely distinct, then
\begin{equation}
\mathbb P\!\left(Y_{n+1}\in \mathcal C_{n+1,\alpha^-}^L\right)\le 1-\alpha^-+\frac{1}{n_{c}+1},
\quad 
\mathbb P\!\left(Y_{n+1}\in\mathcal C_{n+1,\alpha^+}^{U}\right)\le 1-\alpha^+ + \frac{1}{n_{c}+1}.
\label{eq:onesided_upper_bounds_general}
\end{equation}
\end{proposition}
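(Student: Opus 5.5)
The plan is to reduce the proposition to the standard split-conformal quantile lemma, applied separately to each one-sided score. Fix the lower tail; the upper tail is symmetric.

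\textbf{Step 1: write the interval as a score event.} For each of the three families in Table~\ref{tab:onesided_cp_scores}, I first check that
\[
Y_{n+1}\in\mathcal C^L_{n+1,\alpha^-} \iff S_{n+1}^L\le Q^L_{\alpha^-,n_c},
\qquad S_{n+1}^L:=s^L(X_{n+1},Y_{n+1}).
\]
This holds because each score is strictly decreasing in $y$ and affine in $y$. For the residual and quantile scores the slope is $-1$; for the S-residual score it is $-1/\hat\sigma_{n_t}(x)$, with $\hat\sigma_{n_t}>0$. Hence $\{y: s^L(x,y)\le Q\}$ is exactly the half-line in the table, and the computation matches the one displayed for the one-sided quantile score.

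\textbf{Step 2: exchangeability of the scores.} Condition on the proper training set $\mathcal D_{n_t}$. The fitted functions $\hat f_{n_t},\hat\sigma_{n_t},\hat q_{\alpha,n_t}$ are then fixed. The map $(x,y)\mapsto s^L(x,y)$ is therefore a fixed function applied to the exchangeable pairs of the calibration set together with the test pair. It follows that $S_1^L,\dots,S_{n_c}^L,S_{n+1}^L$ are exchangeable.

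\textbf{Step 3: lower bound via the rank argument.} Let $k=\lceil (1-\alpha^-)(n_c+1)\rceil$, so that $Q^L$ is the $k$-th smallest calibration score (take $Q^L=+\infty$ when $k>n_c$). The event $S_{n+1}^L\le Q^L$ is equivalent to $S_{n+1}^L$ being among the $k$ smallest of the $n_c+1$ scores, with ties broken favorably. By exchangeability, the rank of $S_{n+1}^L$ is stochastically dominated by a uniform variable on $\{1,\dots,n_c+1\}$. Hence
\[
\mathbb P\left(S_{n+1}^L\le Q^L \mid \mathcal D_{n_t}\right)\ge \frac{k}{n_c+1}\ge 1-\alpha^-.
\]
Taking expectations over $\mathcal D_{n_t}$ gives Eq.~\eqref{eq:onesided_lower_bounds_general}.

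\textbf{Step 4: upper bound.} If the scores are almost surely distinct, the rank is exactly uniform, so the probability equals $k/(n_c+1)$. Since $k<(1-\alpha^-)(n_c+1)+1$, this probability is below $1-\alpha^-+1/(n_c+1)$, which is Eq.~\eqref{eq:onesided_upper_bounds_general}.

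\textbf{Upper tail.} The upper interval follows identically, using the scores $y-\hat f_{n_t}(x)$, $(y-\hat f_{n_t}(x))/\hat\sigma_{n_t}(x)$ and $y-\hat q_{1-\alpha,n_t}(x)$, which are increasing in $y$.

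\textbf{Main obstacle.} The only delicate point is Step 1 for the signed scores. Unlike the truncated score, the threshold $Q^L$ may be negative. This is harmless here, because there is no truncation at $0$, so the equivalence holds for every real $Q$. The subtlety belongs to the truncated version $s^{\rm q,L}$, but it was handled there by $Q\ge0$. I would also note the edge case $k>n_c$, where the interval is the whole line and the bounds hold trivially.
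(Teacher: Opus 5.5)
Your proposal is correct and follows the same route as the paper, which simply defers to the standard split-conformal rank argument of \citet{lei2018distribution}. That route is: reduce membership in each half-line to the event $S^L_{n+1}\le Q^L_{\alpha^-,n_c}$, use exchangeability of the scores conditional on $\mathcal D_{n_t}$, and get $\mathbb P \ge k/(n_c+1)$ in general and equality under a.s.\ distinct scores, hence the upper bound. You have just written out the details the paper leaves implicit, namely the monotonicity of the affine scores and the $k>n_c$ edge case.
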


\begin{theorem}[Global coverage guarantees]
\label{thm:two_sided_coverage}
Assume all conformity score functions are quasi-convex in $y$. Let $L_{n+1}$ and $U_{n+1}$ be the lower and upper one-sided CP interval bounds for $n+1$, with miscoverage levels $\alpha^{\pm}\in(0,1)$. Let $\underline{\alpha}
:=
(\alpha^-,\alpha^+)$, and define %the associated CP interval obtained as the intersection of the two one-sided CP intervals, that is,
\[
\mathcal C_{n+1,\underline{\alpha}}^{\cap}
:=
[L_{n+1},U_{n+1}]
=
\mathcal C_{n+1,\alpha^-}^{L}
\cap
\mathcal C_{n+1,\alpha^+}^{U}.
\]
Under exchangeability, if the one-sided marginal guarantees  in Eq.~\eqref{eq:onesided_lower_bounds_general} hold, then
\begin{equation*}
1-(\alpha^{-}+\alpha^{+})
\;\le\;
\mathbb P\!\left(Y_{n+1}\in \mathcal C_{n+1,\underline{\alpha}}^{\cap}\right).
\end{equation*}
Moreover, if the corresponding scores are almost surely distinct, so that \eqref{eq:onesided_upper_bounds_general} holds, then
\begin{equation*}
1-(\alpha^{-}+\alpha^{+})
\;\le\;
\mathbb P\!\left(Y_{n+1}\in \mathcal C_{n+1,\underline{\alpha}}^{\cap}\right)
\;\le\;
1-\max\{\alpha^{-},\alpha^{+}\}
+\frac{1}{n_{c}+1}.
\end{equation*}
In particular, if $\alpha^{-}=\alpha^{+}=\alpha/2$, then $1-\alpha
\;\le\;
\mathbb P\!\left(Y_{n+1}\in \mathcal C_{n+1,\underline{\alpha}}^{\cap}\right)
\;\le\;
1-\frac{\alpha}{2}
+\frac{1}{n_{c}+1}$.
\end{theorem}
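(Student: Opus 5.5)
The proof will be a union bound (Bonferroni) for the lower bound, combined with monotonicity of probability under set inclusion for the upper bound; both tail-specific statements come from Proposition~\ref{thm:onesided_validity_three_families}.

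First, I will use quasi-convexity of the one-sided scores in $y$. It guarantees that $\mathcal C^L_{n+1,\alpha^-}=[L_{n+1},+\infty)$ and $\mathcal C^U_{n+1,\alpha^+}=(-\infty,U_{n+1}]$ are genuine half-lines, as Table~\ref{tab:onesided_cp_scores} shows explicitly for the three score families. Their intersection is therefore exactly $[L_{n+1},U_{n+1}]$, with the convention that it is empty if $L_{n+1}>U_{n+1}$. This gives the event identity
\[
\{Y_{n+1}\notin \mathcal C^{\cap}_{n+1,\underline{\alpha}}\}=\{Y_{n+1}<L_{n+1}\}\cup\{Y_{n+1}>U_{n+1}\}=\{Y_{n+1}\notin\mathcal C^L_{n+1,\alpha^-}\}\cup\{Y_{n+1}\notin\mathcal C^U_{n+1,\alpha^+}\}.
\]
Applying the union bound and then Eq.~\eqref{eq:onesided_lower_bounds_general} gives
\[
\mathbb P(Y_{n+1}\notin\mathcal C^\cap_{n+1,\underline{\alpha}})\le \alpha^-+\alpha^+ ,
\]
which is the lower bound $1-(\alpha^-+\alpha^+)$. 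No independence between the two calibrations is needed. This matters because both are computed on the same calibration set and are therefore dependent.

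For the upper bound, I use $\mathcal C^\cap_{n+1,\underline{\alpha}}\subseteq\mathcal C^L_{n+1,\alpha^-}$ and $\mathcal C^\cap_{n+1,\underline{\alpha}}\subseteq\mathcal C^U_{n+1,\alpha^+}$. Monotonicity of probability then gives
\[
\mathbb P(Y_{n+1}\in\mathcal C^\cap_{n+1,\underline{\alpha}})\le\min\bigl\{\mathbb P(Y_{n+1}\in\mathcal C^L_{n+1,\alpha^-}),\,\mathbb P(Y_{n+1}\in\mathcal C^U_{n+1,\alpha^+})\bigr\}.
\]
When the scores are almost surely distinct, Eq.~\eqref{eq:onesided_upper_bounds_general} bounds each term in the minimum. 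The minimum is thus at most
\[
\min\{1-\alpha^-,\,1-\alpha^+\}+\frac{1}{n_c+1}=1-\max\{\alpha^-,\alpha^+\}+\frac{1}{n_c+1}.
\]
The special case $\alpha^-=\alpha^+=\alpha/2$ follows by substitution.

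The only delicate point is the event identity in the first step. I must confirm that quasi-convexity really makes each one-sided set a half-line with the correct orientation: the lower set is upward-closed in $y$ and the upper set is downward-closed. For the scores in Table~\ref{tab:onesided_cp_scores} this holds directly, because each score is monotone in $y$ (decreasing for $L$, increasing for $U$). For a general quasi-convex score I would argue that a sublevel set is an interval, and that one-sidedness forces one endpoint to be infinite. The possibly empty case $L_{n+1}>U_{n+1}$ is harmless, since it only makes the coverage event smaller and the identity of complements still holds.
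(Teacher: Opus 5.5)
Your proposal is correct and follows essentially the same route as the paper: Boole's inequality on $\{Y_{n+1}<L_{n+1}\}\cup\{Y_{n+1}>U_{n+1}\}$ gives the lower bound. Monotonicity of probability then bounds the coverage by the minimum of the two one-sided upper bounds, which equals $1-\max\{\alpha^-,\alpha^+\}+\frac{1}{n_c+1}$, exactly as in the paper's proof.
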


%Setting $\alpha^-=\alpha^+=\alpha/2$ does not recover the exact $1-\alpha$ finite-sample guarantee of 
Compared to classical two-sided CP intervals, the intersection proposal is characterized by a potential efficiency loss due to the non-vanishing gap of value $\alpha/2$ between the upper and lower probability bounds as $n_c \to \infty$. Proof of Theorem \ref{thm:two_sided_coverage} is provided in Supplementary material C, while that of Proposition \ref{thm:onesided_validity_three_families} follows verbatim the theorems in \cite{lei2018distribution}.%Thus, the intersection construction should be viewed as complementary to classical CP: two-sided scores are preferable when tight global guarantees are required, whereas one-sided intervals are more appropriate when tail-specific coverage control is the main objective.
%Note that setting $\alpha^{-}=\alpha^{+}=\alpha/2$ does not recover the exact $1-\alpha$ coverage guarantee of classical conformal intervals: even as $n_{c}\to\infty$, a non-vanishing gap persists between the lower and upper bounds on the two-sided coverage probability. From a practical standpoint, the intersection of one-sided conformal intervals should therefore be viewed as a complementary tool rather than a replacement for classical conformal intervals. When tight \textit{global} finite-sample guarantees are critical, joint calibration via the two-sided scores remains preferable. Conversely, when one-sided coverage control is the primary objective, the one-sided construction provides a principled and interpretable alternative. Moreover, another key practical advantage of the one-sided construction is its intrinsic adaptivity. Unlike symmetric conformal intervals, which inflate both endpoints by the same amount irrespective of where violations occur, the one-sided approach adjusts each bound independently, calibrating it solely based on its own empirical violation history. As a consequence, each endpoint is expanded only to the extent required to control miscoverage in the corresponding tail, leading to potentially tighter and more efficient prediction intervals, particularly in settings with asymmetric distributions and strong serial dependence, including highly autocorrelated time series.

\subsection*{Non-exchangeable case} \label{sec: theory_nonexch}

For notational convenience, results below are stated with the online sequence re-indexed from $1$, where index $1$ corresponds to time $n+1$ after the first $n$ units have been observed. We denote by $N$ the total length of the online sequence. Proofs of all the theoretical results stated here, along with any required Lemma, are provided in Supplementary material C.

% and by $k$ the number of candidate learning rates in the DtACI grid $\{\gamma_j\}_{j=1}^k$, as introduced in Section~\ref{sec: split-CP}.

%\subsubsection*{Distribution-free guarantees for one-sided ACI}
%For notational simplicity, we re-index the online sequence from $1$, with index $1$ corresponding to the original time $n+1$, after the first $n$ observations have been used for initialization. We derive the coverage guarantees of the one-sided adaptive conformal procedures under the same assumptions as in \citep{gibbs2021adaptive}: $\alpha_1^\pm\in[0,1]$, and the empirical quantile functions $\widehat Q_{\alpha,n_c}^{L,U}$ are non-decreasing, with boundary values $\widehat Q_{\alpha,n_{c}}^{L,U}(x) = -\infty$ for all $x < 0$, and $\widehat Q_{\alpha,n_{c}}^{L,U}(x) = +\infty$ for all $x > 1$. Under these conditions, adaptive conformal inference achieves the target long-run coverage frequency without distributional assumptions on the data-generating process.

\begin{proposition}[Tail-specific coverage guarantees under ACI]
\label{prop:aci_lower}
Under the same conditions as \citet{gibbs2021adaptive}, namely, that the empirical quantile functions $\widehat{Q}_{\alpha,n_{c}}^{L,U}$ are non-decreasing with $\widehat{Q}_{\alpha,n_{c}}^{L,U}(x)=-\infty$ for $x<0$ and $\widehat{Q}_{\alpha,n_{c}}^{L,U}(x)=+\infty$ for $x>1$, for a given $\alpha_1^\pm\in[0,1]$, with probability one, for all $N \in \mathbb N$,
\[
\left|
\frac{1}{N}\sum_{i=1}^N \mathrm{err}_i^{\pm}
-
\alpha^{\pm}
\right|
\le
\frac{\max\{\alpha_1^{\pm},\,1-\alpha_1^{\pm}\} + \gamma}{N\gamma}.
\]
In particular, $\frac{1}{N}\sum_{i=1}^N \mathrm{err}_i^{\pm}
\;\xrightarrow[N\to\infty]{\mathrm{a.s.}}\;
\alpha^{\pm}$.
\end{proposition}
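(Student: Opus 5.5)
The plan is to reproduce the argument of Gibbs and Candès (2021, Proposition 4.1) separately for each tail. The one-sided ACI recursions for $\alpha_i^-$ and $\alpha_i^+$ are decoupled: each uses only its own indicator $\mathrm{err}_i^\pm$ and its own step size, here written $\gamma$. So it suffices to treat a generic tail. Drop the superscript and write $\alpha_{i+1}=\alpha_i+\gamma(\alpha^\pm-\mathrm{err}_i)$, where $\mathrm{err}_i=\mathbf 1\{Y_i\notin\mathcal C_{i,\alpha_i}\}$ and $\mathcal C_{i,\alpha_i}$ is the one-sided interval built from $\widehat Q_{\alpha_i,n_c}^{L}$ (resp.\ $U$). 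Telescoping the recursion gives
\[
\alpha_{N+1}-\alpha_1=\gamma\sum_{i=1}^N(\alpha^\pm-\mathrm{err}_i),
\qquad\text{so}\qquad
\Bigl|\frac1N\sum_{i=1}^N\mathrm{err}_i-\alpha^\pm\Bigr|=\frac{|\alpha_{N+1}-\alpha_1|}{N\gamma}.
\]
The whole proof therefore reduces to a deterministic, pathwise bound on the iterates.

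The key lemma is that $\alpha_i\in[-\gamma,1+\gamma]$ for all $i$. I will prove it by induction, and this is where the boundary conventions on $\widehat Q^{L,U}_{\alpha,n_c}$ enter.
\begin{itemize}
\item If $\alpha_i>1$, the quantile is $-\infty$ (one should check the orientation of the quantile against the paper's convention). The one-sided set $[\hat q-\widehat Q,\infty)$ or $(-\infty,\hat q+\widehat Q]$ is then empty, so $\mathrm{err}_i=1$ and $\alpha_{i+1}=\alpha_i-\gamma(1-\alpha^\pm)<\alpha_i$.
\item If $\alpha_i<0$, the quantile is $+\infty$, the set is all of $\mathbb R$, $\mathrm{err}_i=0$, and $\alpha_{i+1}=\alpha_i+\gamma\alpha^\pm>\alpha_i$.
\item If $\alpha_i\in[0,1]$, a single step moves it by at most $\gamma\max\{\alpha^\pm,1-\alpha^\pm\}\le\gamma$, so $\alpha_{i+1}\in[-\gamma,1+\gamma]$.
\end{itemize}
Combined with $\alpha_1\in[0,1]$, these three cases show the iterates never leave $[-\gamma,1+\gamma]$.

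Given the lemma, $|\alpha_{N+1}-\alpha_1|\le\max\{\alpha_1+\gamma,\;1+\gamma-\alpha_1\}=\max\{\alpha_1,1-\alpha_1\}+\gamma$. Substituting into the telescoped identity yields the stated bound. The bound holds for every realization and every $N$, hence with probability one, and letting $N\to\infty$ gives the almost sure convergence. No exchangeability is used, which is consistent with the non-exchangeable setting.

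The main obstacle is the boundary step. One must verify that, for each of the one-sided scores in Table~\ref{tab:onesided_cp_scores}, the conventions $\widehat Q=\pm\infty$ outside $[0,1]$ actually make the one-sided interval empty or all of $\mathbb R$, forcing $\mathrm{err}_i=1$ or $0$. In particular, the sign conventions of the lower-tail score $\hat q-y$ versus the upper-tail score $y-\hat q$ must be matched with the monotonicity of $\widehat Q$. Once that is settled, the rest is the telescoping argument above.
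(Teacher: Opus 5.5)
Your proposal is correct and follows the paper's proof essentially step for step. You telescope the one-sided ACI recursion, bound the iterates in $[-\gamma,1+\gamma]$, and apply the endpoint bound $|\alpha_1-\alpha_{N+1}|\le\max\{\alpha_1+\gamma,\,1+\gamma-\alpha_1\}$. The one difference is that you prove the iterate bound by induction yourself, using the correct orientation that $\alpha_i>1$ makes the one-sided set empty; the paper instead invokes Lemma 4.1 of \citet{gibbs2021adaptive} for this step.
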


\begin{theorem}[Global coverage guarantees under ACI]
\label{thr:aci_total}
Under the same conditions as Proposition~\ref{prop:aci_lower}, let
$\mathrm{err}_i^{\mathrm{tot}} := \mathrm{err}_i^- + \mathrm{err}_i^+$
and $\alpha^{\mathrm{tot}} := \alpha^- + \alpha^+$ denote the observed and target miscoverage levels of the intersection interval $\mathcal{C}_{i,\underline{\alpha}_i}^{\cap}$, obtained from two independent ACI procedures run with the 
%Consider two one-sided ACI procedures run in parallel, one for the lower tail and one for the upper tail. Define $ \mathrm{err}_i^- := \mathbf 1\{Y_i \notin \mathcal C_{i,\alpha_i^-}^{L}\}$ and $ \mathrm{err}_i^+ := \mathbf 1\{Y_i \notin \mathcal C_{i,\alpha_i^+}^{U}\}$, where $\alpha_i^-$ and $\alpha_i^+$ are the adaptive lower- and upper-tail miscoverage levels, respectively, with target levels $\alpha^-$ and $\alpha^+$.
%Assume that both ACI procedures use the 
same step size $\gamma>0$, so that $\alpha_i^\pm \in [-\gamma, 1+\gamma]$ almost surely for all $i$ (Lemma~3.1 in Supplementary Material). Then, with probability one, for every $N\in\mathbb N$,
\[
\left|
\frac{1}{N}\sum_{i=1}^N \mathrm{err}_i^{\mathrm{tot}}
-
\alpha^{\mathrm{tot}}
\right|
\le
\frac{
\max\{\alpha_1^-+\alpha_1^+,\;2-(\alpha_1^-+\alpha_1^+)\}+2\gamma
}{
N\gamma
}.
\]
where $\mathrm{err}_i^{\mathrm{tot}}
:=
\mathrm{err}_i^-+\mathrm{err}_i^+$ and 
$\alpha^{\mathrm{tot}}
:=
\alpha^-+\alpha^+$. In particular, $\frac{1}{N}\sum_{i=1}^N \mathrm{err}_i^{\mathrm{tot}}
\;\xrightarrow[N\to\infty]{\mathrm{a.s.}}\;
\alpha^{\mathrm{tot}}$.
\end{theorem}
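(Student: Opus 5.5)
The plan is to sum the two ACI recursions and telescope, reproducing the Gibbs--Cand\`es argument for the aggregated level $\alpha_i^{\mathrm{tot}} := \alpha_i^- + \alpha_i^+$. Because both procedures share the step size $\gamma$, adding the updates $\alpha_{i+1}^\pm = \alpha_i^\pm + \gamma(\alpha^\pm - \mathrm{err}_i^\pm)$ gives
\[
\alpha_{i+1}^{\mathrm{tot}} = \alpha_i^{\mathrm{tot}} + \gamma\bigl(\alpha^{\mathrm{tot}} - \mathrm{err}_i^{\mathrm{tot}}\bigr).
\]
Summing from $i=1$ to $N$ telescopes to
\[
\frac{1}{N}\sum_{i=1}^N \mathrm{err}_i^{\mathrm{tot}} - \alpha^{\mathrm{tot}} = \frac{\alpha_1^{\mathrm{tot}} - \alpha_{N+1}^{\mathrm{tot}}}{N\gamma}.
\]
The problem therefore reduces to bounding $|\alpha_1^{\mathrm{tot}} - \alpha_{N+1}^{\mathrm{tot}}|$ uniformly in $N$, almost surely.

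For this bound I invoke Lemma~3.1 (the one-sided analogue of Lemma 4.1 in \citet{gibbs2021adaptive}), stated in the theorem as $\alpha_i^\pm \in [-\gamma, 1+\gamma]$ for all $i$. Its proof uses the boundary conventions on $\widehat Q^{L,U}_{\alpha,n_c}$ from Proposition~\ref{prop:aci_lower}:
\begin{itemize}
\item if $\alpha_i^\pm < 0$, the quantile is $+\infty$, the one-sided set is all of $\mathbb R$, so $\mathrm{err}_i^\pm = 0$ and $\alpha^\pm_i$ increases;
\item if $\alpha_i^\pm > 1$, the set is empty, so $\mathrm{err}_i^\pm = 1$ and $\alpha^\pm_i$ decreases;
\item since each step changes $\alpha_i^\pm$ by at most $\gamma$, it can never escape $[-\gamma, 1+\gamma]$.
\end{itemize}
Summing the two tails gives $\alpha_{N+1}^{\mathrm{tot}} \in [-2\gamma, 2+2\gamma]$. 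Hence
\[
|\alpha_1^{\mathrm{tot}} - \alpha_{N+1}^{\mathrm{tot}}| \le \max\{\alpha_1^{\mathrm{tot}} + 2\gamma,\; 2 + 2\gamma - \alpha_1^{\mathrm{tot}}\} = \max\{\alpha_1^{\mathrm{tot}},\, 2 - \alpha_1^{\mathrm{tot}}\} + 2\gamma.
\]
Dividing by $N\gamma$ yields the stated bound. Almost-sure convergence follows because the right-hand side is deterministic and $O(1/N)$.

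The main obstacle is a point of rigor rather than computation: the recursion for $\mathrm{err}_i^{\mathrm{tot}}$ must be the right object. Two things need checking.
\begin{itemize}
\item $\mathrm{err}_i^{\mathrm{tot}}$ must faithfully count miscoverage of the intersection interval, i.e.\ $\mathbf 1\{Y_i\notin\mathcal C^\cap_{i,\underline{\alpha}_i}\} \le \mathrm{err}_i^- + \mathrm{err}_i^+$. Equality holds whenever $L_i \le U_i$, since $Y_i$ cannot lie below $L_i$ and above $U_i$ simultaneously. If the intersection is empty both indicators can fire, and $\mathrm{err}^{\mathrm{tot}}$ is then an upper bound. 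I will remark that the theorem concerns the sum as defined, so no issue arises.
\item The boundedness lemma must hold pathwise for each tail separately. This requires no independence between the two ACI runs, only the deterministic update rule, which is why the result holds with probability one under arbitrary dependence.
\end{itemize}
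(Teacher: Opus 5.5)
Your proposal is correct and follows the paper's proof step for step: sum the two recursions with the common step size $\gamma$, telescope to $(\alpha_1^{\mathrm{tot}}-\alpha_{N+1}^{\mathrm{tot}})/(N\gamma)$, use the one-sided boundedness lemma to get $\alpha_{N+1}^{\mathrm{tot}}\in[-2\gamma,2+2\gamma]$, and apply $|S-x|\le\max\{S-a,b-S\}$. Your two extra remarks go slightly beyond what the paper states, and both are accurate: $\mathrm{err}_i^{\mathrm{tot}}$ only upper-bounds the intersection's miscoverage indicator when $L_i>U_i$, and the argument is pathwise, so it needs no independence between the two runs.
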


%\subsubsection*{Distribution-free guarantees for one-sided DtACI} All the results developed for the DtACI procedure in \citep{gibbs2024conformal} extend directly to the one-sided setting after replacing $\beta_i$ with $\beta_i^\pm$ and $\alpha_i$ with $\alpha_i^\pm$. Below, we restate the main results in their one-sided formulation for the lower tail; completely analogous statements hold for the upper tail. The corresponding proofs follow identically from those in \citep{gibbs2024conformal} and are therefore omitted.

% -------------------------------------------------------------------------
%\paragraph*{Bounds on the long-term one-side coverage} The preceding results establish local coverage control for DtACI. We now consider the asymptotic regime and show that, when $\eta_i^\pm\to 0$ and $\sigma_i^\pm\to 0$ as $i\to\infty$, the long-run average coverage converges to the nominal level $1-\alpha$. 
\begin{proposition}[Tail-specific coverage guarantees under DtACI]
\label{prop:dtaci_onesided_longterm}
Let $\gamma^-_{\min} := \min_j\gamma^-_j$ and $\gamma^-_{\max} := \max_j\gamma^-_j$.
Under the lower-tail DtACI procedure with time-varying parameters $\eta^-_i$
and $\sigma^-_i$, with probability one, for every $N \in \mathbb{N}$,
\[
\left|
\frac{1}{N}\sum_{i=1}^N \mathbb E[\mathrm{err}_i^-]-\alpha^-
\right|
\;\le\;
\frac{1+2\gamma^-_{\max}}{N\gamma^-_{\min}}
+
\frac{(1+2\gamma^-_{\max})^2}{\gamma^-_{\min}}
\cdot \frac1N\sum_{i=1}^N \eta^-_i e^{\eta^-_i(1+2\gamma^-_{\max})}
+
2\frac{1+\gamma^-_{\max}}{\gamma^-_{\min}}\cdot \frac1N\sum_{i=1}^N \sigma^-_i.
\]
where the expectation is over the DtACI aggregation probabilities
$\{p_i^{j,-}\}_{j=1}^k$. In particular, if $\eta_i^{-}\to 0$ and $\sigma_i^{-}\to 0$ as $i\to\infty$, then
$\frac{1}{N}\sum_{i=1}^{N}\mathrm{err}_i^{-}
\xrightarrow[N\to\infty]{\mathrm{a.s.}}\alpha^-$.
\end{proposition}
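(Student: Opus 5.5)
The plan is to transfer Theorem~1 of \citet{gibbs2024conformal} to the lower-tail problem. Concretely, the lower-tail DtACI procedure is exactly DtACI run on the sequence of one-sided prediction sets $\mathcal C^L_{i,\beta}$. In the DtACI analysis the only properties of the sets actually used are monotonicity in $\beta$ and the convention $\mathcal C^L_{i,\beta}=\mathbb R$ for $\beta\le 0$ and $=\emptyset$ for $\beta>1$. Both hold here, because $\widehat Q^{L}_{\beta,n_c}$ is non-increasing in $\beta$ and the one-sided set $[\hat q(X_i)-\widehat Q^L_{\beta,n_c},\infty)$ shrinks as $\beta$ grows. The first step is therefore to define $\beta_i^-:=\sup\{\beta\in[0,1]: Y_i\in\mathcal C^L_{i,\beta}\}$ and check the key identity $\mathrm{err}_i^{j,-}=\mathbf 1\{\alpha_i^{j,-}>\beta_i^-\}$. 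This identity shows that the pinball loss $\ell_{\alpha^-}(\beta_i^-,\cdot)$ has subgradient $\mathrm{err}-\alpha^-$, which is all the original proof requires.

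The argument then reproduces the three-term decomposition. First, each expert update $\alpha^{j,-}_{i+1}=\alpha^{j,-}_i+\gamma^-_j(\alpha^--\mathrm{err}^{j,-}_i)$ keeps $\alpha^{j,-}_i\in[-\gamma^-_{\max},1+\gamma^-_{\max}]$; this is Lemma~3.1, applied per tail. Telescoping gives, for each expert, $|\frac1N\sum_i \mathrm{err}^{j,-}_i-\alpha^-|\le (1+2\gamma^-_{\max})/(N\gamma_j^-)$.

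Second, for the aggregate I will write
\[
\mathbb E[\mathrm{err}_i^-]=\sum_j p_i^{j,-}\mathrm{err}_i^{j,-},
\]
and track the weighted potential $\sum_j p_i^{j,-}\alpha_i^{j,-}/\gamma_j^-$, or equivalently the telescoping of $\alpha_i^{j,-}$ weighted by $p_i^{j,-}$. The change in this potential differs from $\sum_j p^{j,-}_i(\alpha^--\mathrm{err}^{j,-}_i)$ only through changes in the weights, $p^{j,-}_{i+1}-p^{j,-}_i$. These changes arise from two sources, the exponential reweighting and the mixing step with parameter $\sigma_i^-$. For the reweighting, using $|\ell|\le 1+2\gamma^-_{\max}$ and $|e^{-x}-1|\le xe^{x}$, the total-variation change is at most $\eta_i^-(1+2\gamma^-_{\max})e^{\eta_i^-(1+2\gamma^-_{\max})}$. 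For the mixing step, the change is at most $2\sigma_i^-$. Multiplying by the range bound on $\alpha^{j,-}_i/\gamma_j^-$ (at most $(1+2\gamma^-_{\max})/\gamma^-_{\min}$, respectively $(1+\gamma^-_{\max})/\gamma^-_{\min}$ after centering) and averaging over $i$ produces the three displayed terms.

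The final claim follows because the Ces\`aro averages of $\eta_i^-$ and $\sigma_i^-$ vanish. Passing from $\mathbb E[\mathrm{err}^-_i]$, conditional on the past, to the realized $\mathrm{err}^-_i$ uses a martingale strong law: the differences are bounded martingale increments, so their average tends to $0$ almost surely, for instance by Azuma--Hoeffding combined with Borel--Cantelli. The main obstacle is the bookkeeping in the weight-change step, namely getting the exact constants $(1+2\gamma_{\max}^-)^2/\gamma^-_{\min}$ and $2(1+\gamma^-_{\max})/\gamma^-_{\min}$. I would follow \citet{gibbs2024conformal} line by line there, substituting $\beta_i^-$ and $\alpha^-$. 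The upper tail is symmetric.
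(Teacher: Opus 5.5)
Your proposal is correct and takes essentially the same route as the paper. The paper defers to Gibbs and Cand\`es, and its proof of the global DtACI theorem (Steps 1--2, specialised to one tail) uses the same ingredients you do: the potential $\tilde\alpha_i^-=\sum_j p_i^{j,-}\alpha_i^{j,-}/\gamma_j^-$, telescoping, and a remainder split into exponential-reweighting and mixing contributions, giving exactly your constants. Your Azuma--Hoeffding plus Borel--Cantelli argument for the almost-sure claim is more careful than the paper's bare appeal to the law of large numbers, whereas the identity $\mathrm{err}_i^{j,-}=\mathbf 1\{\alpha_i^{j,-}>\beta_i^-\}$ and the subgradient property are not needed for this coverage bound; they matter only for the regret results.
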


%The following theorem provides bounds on the short-term coverage behavior of the intersection interval.

%The following theorem provides bounds on the asymptotic coverage of the intersection interval.

\begin{theorem}[Global coverage under DtACI]
\label{thm:dtaci_total_common_gamma}
Let $\mathrm{err}_i^{\mathrm{tot}} := \mathrm{err}_i^- + \mathrm{err}_i^+$
and $\alpha^{\mathrm{tot}} := \alpha^- + \alpha^+$ denote the observed and target miscoverage level of the intersection interval $\mathcal{C}_{i,\underline{\alpha}_i}^{\cap}$, obtained from two independent DtACI procedures run with the same candidate
grid $\{\gamma_j\}_{j=1}^k$ with $\gamma_{\min} := \min_j \gamma_j$ and
$\gamma_{\max} := \max_j \gamma_j$, but possibly different learning-rate
sequences $\{\eta_i^{\pm}\}_{i \ge 1}$ and mixing sequences
$\{\sigma_i^{\pm}\}_{i \ge 1}$. Assume the conformity score functions are quasi-convex
in $y$. Given target levels $\alpha^-, \alpha^+ \in (0,1)$, define
\[
\tilde{\alpha}_i^{\pm} := \sum_{l=1}^k \frac{p_i^{l,\pm}\alpha_i^{l,\pm}}{\gamma_l},
\qquad
\tilde{\alpha}_i^{\mathrm{tot}} := \tilde{\alpha}_i^- + \tilde{\alpha}_i^+,
\qquad
B_0 := \max\!\left\{
  \tilde{\alpha}_1^{\mathrm{tot}} + \frac{2\gamma_{\max}}{\gamma_{\min}},\;
  \frac{2+2\gamma_{\max}}{\gamma_{\min}} - \tilde{\alpha}_1^{\mathrm{tot}}
\right\}.
\]
Then, for every $N\in\mathbb N$,
\begin{align}
\label{eq:dtaci_total_common_gamma_bound}
\left|
  \frac{1}{N}\sum_{i=1}^{N} \mathbb{E}[\mathrm{err}_i^{\mathrm{tot}}]
  - \alpha^{\mathrm{tot}}
\right|
\;\le\;
\frac{B_0}{N}
&+
\frac{(1+2\gamma_{\max})^2}{\gamma_{\min}}
\cdot\frac{1}{N}\sum_{i=1}^{N}
\Bigl[
  \eta_i^{-}e^{\eta_i^{-}(1+2\gamma_{\max})}
  +
  \eta_i^{+}e^{\eta_i^{+}(1+2\gamma_{\max})}
\Bigr] \nonumber \\
&+
2\,\frac{1+\gamma_{\max}}{\gamma_{\min}}
\cdot\frac{1}{N}\sum_{i=1}^{N}(\sigma_i^{-}+\sigma_i^{+}),
\end{align}
where the expectation is over the DtACI aggregation probabilities
$\{p_i^{j,\pm}\}_{j=1}^k$. In particular, if $\eta_i^{\pm}\to 0$ and $\sigma_i^{\pm}\to 0$ as
$i\to\infty$, then
$\frac{1}{N}\sum_{i=1}^{N}\mathrm{err}_i^{\mathrm{tot}}
\xrightarrow[N\to\infty]{\mathrm{a.s.}}\alpha^{\mathrm{tot}}$.\\ 
When $\eta_i^- = \eta_i^+ =: \eta_i$ and $\sigma_i^- = \sigma_i^+ =: \sigma_i$
for all $i$, Eq.~\eqref{eq:dtaci_total_common_gamma_bound} simplifies to
\[
\left|
  \frac{1}{N}\sum_{i=1}^{N} \mathbb{E}[\mathrm{err}_i^{\mathrm{tot}}]
  - \alpha^{\mathrm{tot}}
\right|
\;\le\;
\frac{B_0}{N}
+
2\,\frac{(1+2\gamma_{\max})^2}{\gamma_{\min}}
\frac{1}{N}\sum_{i=1}^N
\eta_i\,e^{\eta_i(1+2\gamma_{\max})}
+
4\,\frac{1+\gamma_{\max}}{\gamma_{\min}}
\frac{1}{N}\sum_{i=1}^N \sigma_i.
\]
\end{theorem}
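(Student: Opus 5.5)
The plan is to obtain the bound by summing the two one-sided bounds of Proposition~\ref{prop:dtaci_onesided_longterm}, using that the two DtACI procedures share the grid $\{\gamma_j\}_{j=1}^k$. The proof then only has to track the constant $B_0$ carefully.

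\textbf{Step 1: the total error is the sum of the tail errors.} Quasi-convexity of the scores makes $\mathcal C^{L}_{i,\alpha_i^-}=[L_i,\infty)$ and $\mathcal C^{U}_{i,\alpha_i^+}=(-\infty,U_i]$ genuine half-lines. Hence $Y_i\notin\mathcal C^{\cap}_{i,\underline\alpha_i}$ holds iff $Y_i<L_i$ or $Y_i>U_i$. In all cases $\mathrm{err}_i^{\mathrm{tot}}=\mathrm{err}_i^-+\mathrm{err}_i^+$ upper bounds the miscoverage indicator of the intersection, with equality whenever $L_i\le U_i$. Linearity of expectation over the aggregation probabilities then gives
\[
\frac1N\sum_{i=1}^N \mathbb E[\mathrm{err}_i^{\mathrm{tot}}]-\alpha^{\mathrm{tot}}
=\Bigl(\frac1N\sum_{i=1}^N \mathbb E[\mathrm{err}_i^-]-\alpha^-\Bigr)+\Bigl(\frac1N\sum_{i=1}^N \mathbb E[\mathrm{err}_i^+]-\alpha^+\Bigr).
\]

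\textbf{Step 2: bound each tail and add.} For each tail I will reuse the DtACI argument of \citet{gibbs2024conformal} that underlies Proposition~\ref{prop:dtaci_onesided_longterm}, rather than its final bound. The key quantity is the potential $\tilde\alpha_i^{\pm}=\sum_l p_i^{l,\pm}\alpha_i^{l,\pm}/\gamma_l$.
\begin{itemize}
\item Each expert satisfies $\alpha_{i+1}^{l,\pm}/\gamma_l-\alpha_i^{l,\pm}/\gamma_l=\alpha^\pm-\mathrm{err}_i^{l,\pm}$.
\item So $\tilde\alpha^{\pm}_{i+1}-\tilde\alpha^{\pm}_i$ equals $\alpha^\pm-\mathbb E[\mathrm{err}_i^\pm]$ plus two perturbations: the exponential reweighting, of size $\lesssim (1+2\gamma_{\max})^2\eta_i^\pm e^{\eta_i^\pm(1+2\gamma_{\max})}/\gamma_{\min}$, and the $\sigma_i^\pm$-mixing, of size $\lesssim 2(1+\gamma_{\max})\sigma_i^\pm/\gamma_{\min}$.
\item Telescoping from $1$ to $N$ and dividing by $N$ bounds the average tail error by $(\tilde\alpha^\pm_{N+1}-\tilde\alpha^\pm_1)/N$ plus the averaged perturbations.
\end{itemize}
Adding the lower- and upper-tail identities telescopes the combined potential $\tilde\alpha_i^{\mathrm{tot}}$. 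The perturbation terms simply add, which produces the $\eta_i^-$/$\eta_i^+$ and $\sigma_i^-$/$\sigma_i^+$ sums in \eqref{eq:dtaci_total_common_gamma_bound} with the stated constants.

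\textbf{Step 3: the boundary term $B_0$ (main obstacle).} This is where I expect the real work. By the analogue of Lemma~3.1 for a common grid, every $\alpha_i^{l,\pm}\in[-\gamma_{\max},1+\gamma_{\max}]$. Since $p_i^{l,\pm}$ sum to one and $1/\gamma_l\le 1/\gamma_{\min}$, this gives $\tilde\alpha_{N+1}^{\mathrm{tot}}\in[-2\gamma_{\max}/\gamma_{\min},(2+2\gamma_{\max})/\gamma_{\min}]$. Therefore $|\tilde\alpha^{\mathrm{tot}}_{N+1}-\tilde\alpha^{\mathrm{tot}}_1|\le B_0$, matching its definition as a maximum of the two one-sided deviations. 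The delicate points are two:
\begin{itemize}
\item Dividing by the weighted $\gamma_l$ is only sign-safe for the lower end after using $\alpha\ge-\gamma_{\max}$, so I will check both ends separately.
\item The mixing step moves $\tilde\alpha$ by reallocating mass across experts with different $1/\gamma_l$; this must be charged to the $\sigma$ term, not the boundary term.
\end{itemize}

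\textbf{Step 4: consequences.} If $\eta_i^\pm,\sigma_i^\pm\to0$, Cesàro averages of the perturbations vanish. Almost-sure convergence of the realized errors then follows by adding a martingale-difference argument: $\mathrm{err}_i^{\pm}-\mathbb E[\mathrm{err}_i^\pm]$ is bounded, so the strong law for bounded martingale differences applies. Setting $\eta_i^-=\eta_i^+$ and $\sigma_i^-=\sigma_i^+$ collapses the sums, giving the factors $2$ and $4$ in the simplified bound.
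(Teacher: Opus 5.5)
Your proposal is correct and follows essentially the same route as the paper's proof. Both use the per-tail identity $\mathbb{E}[\mathrm{err}_i^\pm]-\alpha^\pm=\tilde\alpha_i^\pm-\tilde\alpha_{i+1}^\pm+R_i^\pm$, the same reweighting and mixing bounds on the remainder $R_i^\pm$, telescoping of the combined potential $\tilde\alpha_i^{\mathrm{tot}}$, and the expert-wise range bound $\alpha_i^{j,\pm}\in[-\gamma_j,1+\gamma_j]$ to obtain $B_0$; your bounded-martingale-difference strong law for the almost-sure claim is, if anything, a cleaner justification than the paper's brief appeal to the law of large numbers.
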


The results established so far provide asymptotic coverage guarantees for the intersection interval under both ACI and DtACI. We now complement these with finite-sample bounds on how closely the adaptive miscoverage levels track their oracle targets, characterising the short-term calibration behaviour of the intersection interval under DtACI.

\begin{proposition}[Tail-specific short-term bounds under DtACI]
\label{prop:dtaci_onesided_regret}
Let $\{\alpha_i^{*,-}\}_{i \in I}$ be any comparator sequence of oracle miscoverage levels, representing the best adaptive strategy in hindsight over the interval $I$. Let $\gamma^-_{\max} := \max_{1 \le j \le k}\gamma^-_j$
and assume $\gamma^-_1 < \gamma^-_2 < \cdots < \gamma^-_k$ with
$\gamma^-_{j+1}/\gamma^-_j \le 2$ for all $1 < j < k$,
$\gamma^-_k \ge \sqrt{1+1/|I|}$, $\sigma = 1/(2|I|)$,
$\gamma^-_1 \le \sqrt{(\sum_{i=r+1}^s
|\alpha_i^{*,-}-\alpha_{i-1}^{*,-}|+1)/|I|}$, and
$\eta^- = \sqrt{(\log(k|I|)+2)/
\sum_{i=r}^s\mathbb{E}[\ell(\beta_i^-,\alpha_i^-)^2]}$.
Then, 
\begin{enumerate}

\item \textbf{(Dynamic regret)} For any interval $I=[r,s]\subset[N]$,
\begin{align*}
\frac{1}{|I|}\sum_{i=r}^s \mathbb{E}[\ell(\beta^-_i,\alpha^-_i)]
-
\frac{1}{|I|}\sum_{i=r}^s \ell(\beta^-_i,\alpha_i^{*,-})
&\le
2\sqrt{\frac{\log(k|I|)+2}{|I|}}
\sqrt{\frac{1}{|I|}\sum_{i=r}^s\mathbb{E}[\ell(\beta^-_i,\alpha^-_i)^2]}\\
&\quad+
4(1+\gamma^-_{\max})^2
\sqrt{\frac{\sum_{i=r+1}^s|\alpha_i^{*,-}-\alpha_{i-1}^{*,-}|+1}{|I|}}\\
&=
O\!\left(\sqrt{\frac{\log|I|}{|I|}}\right)
+
O\!\left(\sqrt{\frac{\sum_{i=r+1}^s|\alpha_i^{*,-}-\alpha_{i-1}^{*,-}|}{|I|}}\right),
\end{align*}
where the expectation is over the DtACI aggregation probabilities
$\{p_i^{j,-}\}_{j=1}^k$.

\item \textbf{(Coverage deviation)} Additionally assume $\beta^-$ has density $p^-(\cdot)$ on $[0,1]$ with $p^-(x) \ge p_- > 0$, and let $\alpha_i^{\star,-}$ satisfy $\mathbb{P}(Y_i \in \mathcal{C}^L_{i,\alpha_i^{\star,-}}
\mid \{\beta_u^-\}_{u<i}) = 1-\alpha^-$. Then, %combining part~(i) with the bound $|\alpha_i^- - \alpha_i^{\star,-}|^2 \le (2/p^-)\bigl(\mathbb{E}[\ell(\beta^-,\alpha_i^-)] - \mathbb{E}[\ell(\beta^-,\alpha_i^{\star,-})]\bigr)$,
\begin{equation}
\label{eq:onesided_mse_from_regret}
\frac{1}{|I|}\sum_{i=r}^s
\frac{p_-}{2}\,\mathbb{E}\!\bigl[(\alpha_i^--\alpha_i^{\star,-})^2\bigr]
\;\le\;
O\!\left(\sqrt{\frac{\log|I|}{|I|}}\right)
+
O\!\left(\sqrt{\frac{1}{|I|}\sum_{i=r+1}^s
\mathbb{E}|\alpha_i^{*,-}-\alpha_{i-1}^{*,-}|}\right),
\end{equation}
where the expectation is over $\{p_i^{j,-}\}_{j=1}^k$ and
$\{\beta^-_i\}_{i \le s}$.
\end{enumerate}
\end{proposition}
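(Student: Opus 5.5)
The statement is a one-sided transcription of the dynamic-regret and coverage-deviation results for DtACI in \citet{gibbs2024conformal}. My plan is to verify that the lower-tail procedure fits their abstract framework exactly, and then import their argument.

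\textbf{Reduction to the two-sided setting.} Set $\beta_i^- := \sup\{\beta\in[0,1]: Y_i\in\mathcal C^L_{i,\beta}\}$. The one-sided quantile $\widehat Q^{L}_{\beta,n_c}$ is non-increasing in $\beta$, so $\mathcal C^L_{i,\beta}$ is nested and shrinks as $\beta$ grows. Hence $\mathrm{err}_i^- = \mathbf 1\{\alpha_i^- > \beta_i^-\}$, up to boundary ties that the convention in Proposition~\ref{prop:aci_lower} handles. The lower-tail DtACI therefore interacts with the data only through the scalar sequence $\{\beta_i^-\}$, the pinball loss $\ell(\beta_i^-,\cdot)$ at level $\alpha^-$, and the gradient-type ACI updates of its experts. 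This is precisely the adversarial online setting of \citet{gibbs2024conformal}. Their analysis makes no use of the two-sided shape of the set, so every step transfers once we replace $(\beta_i,\alpha_i,\gamma_j,\eta,\sigma)$ by $(\beta_i^-,\alpha_i^-,\gamma_j^-,\eta^-,\sigma)$.

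\textbf{Part 1 (dynamic regret).} I would split the regret over $I=[r,s]$ into two pieces.
\begin{itemize}
\item The regret of the exponentially weighted aggregation with mixing $\sigma$ against the best single expert $j$. The standard exponential-weights bound with fixed share gives $(\log k + \log|I| + O(1))/\eta^- + \eta^-\sum_i\mathbb E[\ell^2]$. Choosing $\eta^-$ as in the statement balances these terms and yields the first summand.
\item The dynamic regret of the chosen ACI expert $j$ against $\{\alpha_i^{*,-}\}$. Each expert is online subgradient descent on the pinball loss, with subgradient $\mathrm{err}_i^- - \alpha^-$, which is bounded by $1$. Its iterates stay in $[-\gamma_{\max}^-, 1+\gamma_{\max}^-]$ (Lemma 3.1). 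The usual dynamic OGD bound is therefore $\bigl((1+\gamma^-_{\max})^2 + 2(1+\gamma_{\max}^-)\sum|\alpha_i^{*,-}-\alpha_{i-1}^{*,-}|\bigr)/\gamma_j + \gamma_j|I|$.
\end{itemize}
The ratio condition $\gamma^-_{j+1}/\gamma^-_j\le 2$, together with the stated ranges on $\gamma^-_1$ and $\gamma^-_k$, guarantees that some grid point lies within a factor $2$ of the optimal step $\sqrt{(\sum|\Delta\alpha^*|+1)/|I|}$. Plugging that grid point in gives the $4(1+\gamma^-_{\max})^2\sqrt{\cdot}$ term. Dividing by $|I|$ then gives the displayed $O(\cdot)$ form.

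\textbf{Part 2 (coverage deviation).} I would first establish a strong-convexity lower bound for the conditional expected pinball loss. Conditioning on $\{\beta_u^-\}_{u<i}$, the map $a\mapsto\mathbb E[\ell(\beta_i^-,a)]$ has derivative $\mathbb P(\beta_i^-<a)-\alpha^-$. Its second derivative is the density, which is at least $p_-$. The minimiser is $\alpha_i^{\star,-}$, by the defining property $\mathbb P(Y_i\in\mathcal C^L_{i,\alpha_i^{\star,-}}\mid\cdot)=1-\alpha^-$. Integrating therefore gives
\[
\tfrac{p_-}{2}(a-\alpha_i^{\star,-})^2\le\mathbb E[\ell(\beta_i^-,a)]-\mathbb E[\ell(\beta_i^-,\alpha_i^{\star,-})].
\]
I would then apply this with $a=\alpha_i^-$, average over $I$, and take total expectation. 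Finally I would invoke Part 1 with comparator $\alpha_i^{*,-}=\alpha_i^{\star,-}$, after taking expectations of both sides of the regret bound, which remains valid pathwise. This yields \eqref{eq:onesided_mse_from_regret}.

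\textbf{Main obstacle.} The main obstacle is the measure-theoretic care in Part 2. The regret bound is pathwise, with expectation only over the aggregation randomness. The convexity step, by contrast, needs conditional expectations over $\beta_i^-$ given the past, and $\alpha_i^-$ must be predictable for the argument to work. I would resolve this by noting that $\alpha_i^-$ and the weights $p_i^{j,-}$ are measurable with respect to $\{\beta_u^-\}_{u<i}$ and the internal randomisation, and then applying the tower property term by term, exactly as done in \citet{gibbs2024conformal}.
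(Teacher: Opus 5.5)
Your proposal follows the same route as the paper: Part 1 is imported from Theorem~4 of \citet{gibbs2024conformal} (your exponential-weights-plus-dynamic-OGD sketch just unpacks that result), and Part 2 combines the strong-convexity bound $\tfrac{p_-}{2}(a-\alpha_i^{\star,-})^2\le\mathbb E[\ell(\beta_i^-,a)]-\mathbb E[\ell(\beta_i^-,\alpha_i^{\star,-})]$ (their Proposition~5) with Part 1, which is exactly the paper's argument. One caveat applies to your write-up and to the paper alike: since $\beta_i^-\in[0,1]$, the second derivative of $a\mapsto\mathbb E[\ell(\beta_i^-,a)]$ is at least $p_-$ only for $a\in[0,1]$, whereas $\alpha_i^-$ can lie anywhere in $[-\gamma^-_{\max},1+\gamma^-_{\max}]$, so when $\alpha_i^-\notin[0,1]$ the constant $p_-/2$ must be replaced by a smaller one depending on $\gamma^-_{\max}$ and $\alpha^-$.
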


The variation term $\sum_{i=r+1}^s|\alpha_i^{*,\pm}-\alpha_{i-1}^{*,\pm}|/|I|$ measures the magnitude of distribution shift over $I$; since the bound holds for every interval of fixed length, DtACI responds to nonstationarity in a local-in-time manner across the full sequence.

\begin{theorem}[Global short-term bound under DtACI]
\label{thr:pinball_to_alpha_total_improved}
Let $\alpha_i^{\mathrm{tot}} := \alpha_i^- + \alpha_i^+$ and
$\alpha_i^{*,\mathrm{tot}} := \alpha_i^{*,-} + \alpha_i^{*,+}$ denote the
adaptive and oracle total miscoverage levels for the intersection interval
$\mathcal{C}_{i,\underline{\alpha}_i}^{\cap}$, obtained from two independent
DtACI procedures run in parallel under the same conditions as
Proposition~\ref{prop:dtaci_onesided_regret}, with conformity score functions
quasi-convex in $y$. Assume $\beta^\pm$ has density $p^\pm(\cdot)$ on
$[0,1]$ satisfying $p^\pm(x) \ge p_\pm > 0$, and that there exists
$\alpha^{\star,\pm}$ with $\mathbb{P}(\beta^\pm < \alpha^{\star,\pm}) =
\alpha^\pm$. Define $p_{\min} := \min\{p_-, p_+\}$ and
$G_i^\pm := \mathbb{E}[\ell(\beta^\pm,\alpha_i^\pm)] -
\mathbb{E}[\ell(\beta^\pm,\alpha_i^{*,\pm})]$.
Then, for every $i$,
\begin{equation}
\label{eq:prop_total_improved_simpler}
(\alpha_i^{\mathrm{tot}}-\alpha_i^{*,\mathrm{tot}})^2
\;\le\;
\frac{4}{p_{\min}}\bigl(G_i^-+G_i^+\bigr).
\end{equation}
Furthermore, for any interval $I=[r,s]\subset[N]$,
\begin{align}
\label{eq:short_term_total_like6_improved}
\frac{1}{|I|}\sum_{i=r}^s
\frac{p_{\min}}{4}\,
\mathbb{E}\!\left[(\alpha_i^{\mathrm{tot}}-\alpha_i^{*,\mathrm{tot}})^2
\right]
&\le
O\!\left(\sqrt{\frac{\log|I|}{|I|}}\right) \nonumber \\
&+
O\!\Bigg(
\sqrt{\frac{1}{|I|}\sum_{i=r+1}^s
\mathbb{E}|\alpha_i^{*,-}-\alpha_{i-1}^{*,-}|}
+
\sqrt{\frac{1}{|I|}\sum_{i=r+1}^s
\mathbb{E}|\alpha_i^{*,+}-\alpha_{i-1}^{*,+}|}
\Bigg),
\end{align}
where the expectation is over $\{p_i^{j,\pm}\}_{j=1}^k$ and
$\{\beta_i^\pm\}_{i \le s}$. If additionally the map $\underline{\alpha}
\mapsto \mathbb{P}(Y_i \in \mathcal{C}_{i,\underline{\alpha}}^{\cap} \mid
\{\beta_s^\pm\}_{s<i})$ is $\mathcal{L}_{\cap}$-Lipschitz in
$\alpha^{\mathrm{tot}} = \alpha^- + \alpha^+$, then
\eqref{eq:short_term_total_like6_improved} also bounds the local coverage
deviation of $\mathcal{C}_{i,\underline{\alpha}_i}^{\cap}$, a condition
reasonable whenever the conformity-score distributions and their quantile
estimates vary smoothly with the miscoverage level.
\end{theorem}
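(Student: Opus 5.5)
The plan is to derive both parts of the theorem from the tail-specific statements already available (Proposition~\ref{prop:dtaci_onesided_regret} and its upper-tail analogue), plus a per-tail strong-convexity inequality for the expected pinball loss. For the pointwise bound \eqref{eq:prop_total_improved_simpler}, I would start with the elementary inequality
\[
(\alpha_i^{\mathrm{tot}}-\alpha_i^{*,\mathrm{tot}})^2=\bigl((\alpha_i^--\alpha_i^{*,-})+(\alpha_i^+-\alpha_i^{*,+})\bigr)^2\le 2(\alpha_i^--\alpha_i^{*,-})^2+2(\alpha_i^+-\alpha_i^{*,+})^2 .
\]
This reduces the claim to showing $(\alpha_i^\pm-\alpha_i^{*,\pm})^2\le (2/p_\pm)\,G_i^\pm$ for each tail separately. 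Since $p_\pm\ge p_{\min}$, summing the two tail bounds then gives exactly $\frac{4}{p_{\min}}(G_i^-+G_i^+)$.

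The per-tail inequality is the step I expect to need the most care. Consider $g(a):=\mathbb E[\ell(\beta^\pm,a)]$ with $\ell_{\alpha^\pm}(\beta,a)=\alpha^\pm(\beta-a)-\min\{0,\beta-a\}$. Then $g'(a)=-\alpha^\pm+\mathbb P(\beta^\pm<a)$ and $g''(a)=p^\pm(a)\ge p_\pm$ on $[0,1]$, so $g$ is $p_\pm$-strongly convex there. The condition $\mathbb P(\beta^\pm<\alpha^{\star,\pm})=\alpha^\pm$ says that $\alpha^{\star,\pm}$ is the minimiser, which gives $g(a)-g(\alpha^{\star,\pm})\ge \frac{p_\pm}{2}(a-\alpha^{\star,\pm})^2$. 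This is the same argument used in \citet{gibbs2024conformal} and invoked for part~2 of Proposition~\ref{prop:dtaci_onesided_regret}.

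Two technicalities need handling. First, the iterates $\alpha_i^\pm$ can leave $[0,1]$, since they lie in $[-\gamma,1+\gamma]$ by Lemma~3.1. There I would use that $g$ is linear with slope $-\alpha^\pm$ or $1-\alpha^\pm$ outside $[0,1]$, and either show the quadratic lower bound still holds or restrict to the clipped values. Second, the comparator in $G_i^\pm$ must be the minimiser $\alpha^{\star,\pm}$ (identified with $\alpha_i^{*,\pm}$ as in the statement) so that the first-order term vanishes. I would make this identification explicit; otherwise an extra linear term $g'(\alpha^{*})(a-\alpha^*)$ appears.

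For the interval bound \eqref{eq:short_term_total_like6_improved}, I would take expectations in \eqref{eq:prop_total_improved_simpler} and average over $i\in I$. This gives $\frac{p_{\min}}{4}\frac1{|I|}\sum_i\mathbb E[(\alpha_i^{\mathrm{tot}}-\alpha_i^{*,\mathrm{tot}})^2]\le \frac1{|I|}\sum_i(\mathbb E G_i^-+\mathbb E G_i^+)$. Each tail average is the dynamic regret of that tail's DtACI run, up to replacing realised losses by conditional expected losses. That replacement is justified as in \citet{gibbs2024conformal}, since $\beta_i^\pm$ given the past has the stated law. Applying Proposition~\ref{prop:dtaci_onesided_regret}(1) to each tail then yields $O(\sqrt{\log|I|/|I|})$ plus the two variation terms; the square-root terms combine because expectation is taken inside. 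Independence of the two DtACI runs is not needed, only linearity of expectation. Quasi-convexity enters only to ensure that the intersection is the interval $[L_i,U_i]$, so that the two tail errors are disjoint events.

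Finally, for the coverage deviation claim, I would write $|\mathbb P(Y_i\in\mathcal C^{\cap}_{i,\underline\alpha_i}\mid\cdot)-\mathbb P(Y_i\in\mathcal C^{\cap}_{i,\underline\alpha_i^{*}}\mid\cdot)|\le\mathcal L_\cap|\alpha_i^{\mathrm{tot}}-\alpha_i^{*,\mathrm{tot}}|$. Squaring this and averaging transfers \eqref{eq:short_term_total_like6_improved} to the coverage deviation, with constant $\mathcal L_\cap^2$.
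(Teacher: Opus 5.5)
Your proposal follows the paper's proof: the per-tail strong-convexity bound $(\alpha_i^\pm-\alpha_i^{*,\pm})^2\le \frac{2}{p_\pm}G_i^\pm$ (Proposition~5 of \citet{gibbs2024conformal}) is combined via $(x+y)^2\le 2x^2+2y^2$ (equivalent to the paper's $(\sqrt a+\sqrt b)^2\le 2(a+b)$) and $p_{\min}\le p_\pm$, then averaged over $I$ and fed into the per-tail dynamic regret bound of Proposition~\ref{prop:dtaci_onesided_regret}. The two technicalities you flag are real, and the paper passes over both silently: it identifies the comparator $\alpha_i^{*,\pm}$ with the minimiser $\alpha^{\star,\pm}$, and it applies the quadratic bound to iterates lying in $[-\gamma_{\max},1+\gamma_{\max}]$; for the second issue only your clipping option works, because outside $[0,1]$ the quadratic lower bound genuinely fails (for uniform $\beta^\pm$ and $a=1+\gamma$ the deficit is $\gamma^2/2$).
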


\section{Experiments}\label{sec: sim_studies}

%In this section we present a comprehensive numerical comparison of conformal prediction methods, encompassing both \emph{classical closed conformal prediction} (Classic CP) and the proposed \emph{open (one-sided) conformal prediction} framework.
%The analysis is conducted across multiple iid (hence exchangeable) data generating processes, with the goal of assessing not only global coverage properties but also directional calibration and interval efficiency.

Simulation experiments are now presented to validate the proposed approach. %Each simulated time series has length $T=3000$, and results are averaged over $R=500$ Monte Carlo replications.

\textbf{Scenarios.} We consider the following data generating processes, %chosen to reflect increasing levels of distributional complexity.
each based on a sample of size $N = 3000$, replicated for $R=500$ Monte Carlo (MC) runs. Throughout all scenarios, the location and scale parameters are fixed at $\mu = 0.5$ and $\sigma = 1$.
\vspace*{-0.1cm}
\begin{description}
\item[\textit{IID Gaussian}]
Observations generated independently as
$
Y_{i} \overset{IID}{\sim} \mathcal N(\mu,\sigma^2),\ i = 1,\dots, N
$.
This setting serves as a benchmark for light-tailed and symmetric distributions.

\item[\textit{AR(1) Gaussian}] We generate
$
Y_{i} = \phi Y_{i-1} + \epsilon_i, \epsilon_i \overset{IID}{\sim} \mathcal N(\mu,\sigma^2)
$, with $\phi = 0.9$. 
\item [\textit{IID Student-$t$}]
We generate
$
Y_{i} = \mu + \sigma \tau_i
$, where $\tau_i \overset{IID}{\sim} t_\nu$, with $\nu=5$ degrees of freedom.
This scenario introduces heavy-tailed behavior while preserving symmetry.
\item[\textit{AR(1) Student-$t$}] We generate $
Y_{i} = \phi Y_{i-1} + \tau_i, \tau_i  \overset{IID}{\sim} t_\nu$ with $\nu=5$ and $\phi = 0.9$. 
\item[\textit{IID Skewed Student-$t$}]
We generate
$
Y_{i} = \mu + \sigma S_{\lambda,i}
$, where
$S_{\lambda,i} \overset{IID}{\sim} \mathrm{Skew}\text{-}t_{\nu,\lambda}$ with $\nu=5$ and
$\lambda=-3$. This scenario considers heavy tails and pronounced negative asymmetry.
\item[\textit{AR(1) Skewed Student-$t$}] We generate $
Y_{i} = \phi Y_{i-1} + S_{\lambda,i}, \text{ where } S_{\lambda,i} \overset{IID}{\sim} \mathrm{Skew}\text{-}t_{\nu,\lambda}$ with $\nu=5$,
$\lambda=-3$ and $\phi = 0.9$.
\end{description}

\textbf{Prediction model.} For all methods, we employ an autoregressive model of order one, denoted as AR(1), as the underlying forecasting model; this is given by
%Let $\{Y_i\}_{i=1}^N$ denote the simulated series, for each observation $i$, we estimate the AR(1) model
\[
Y_{i}
=
c + \phi Y_{i-1} + \varepsilon_i,\qquad i = 1,2,\dots
\]
where $c$ and $\phi$, the unconditional mean (or drift) and the autoregressive coefficient, respectively, are estimated from the data, while $\varepsilon_i \overset{IID}{\sim} \mathcal N(0,1)$ is the error component.

The fitted model is then used to produce one-step-ahead predictive quantities. In particular, for each observation $i\geq n+1$, we obtain the conditional mean forecast $\widehat \mu_i$, together with the associated conditional standard deviation $\widehat\sigma_{i}$.
%This yields a one-step-ahead point forecast $\widehat \mu_i := \mathbb E[Y_i \mid \mathcal W_{i-1}]$, as well as a predictive standard deviation $\widehat\sigma_{i} :=\sigma[Y_i \mid \mathcal W_{i-1}]$. 
These two predictors are used in the CP procedures based on residual and standardized residual scores. For CP intervals based on the quantile score, the underlying quantile predictors coincide with those used in the benchmark model.

\textbf{CP specification and compared methods.} In the dependent case, the autoregressive AR(1) structure with
coefficient $\phi=0.9$ introduces strong temporal dependence, violating exchangeability.
Therefore, all CP procedures under dependence scenarios are implemented using ACI and DtACI. In particular, %for AR(1) data-generating processes (DGPs), 
we set $\gamma=0.005$ for ACI to enable adaptive calibration under dependence. For DtACI, we instead consider the following grid of candidate $\gamma$ values:
$\gamma \in \{0.001,\;0.002,\;0.004,\;0.008,\;0.016,\;0.032,\;0.064,\;0.128\}$.

We compare the following approaches when constructing a prediction interval for $Y_{n+1}$: 
\begin{description}
    \item[Proposed CP approach $\mathcal C_{n+1}^{\cap}$:] CP interval obtained as the one-sided intersection, computed with all four scores of interest (Table~\ref{tab:onesided_cp_scores}), and including the two versions of the quantile score.

    \item[Standard CP approach $\mathcal C_{n+1}$:] standard two-sided CP interval, computed with all four scores of interest.

    \item[Asymptotic CLT approach (Benchmark):] As a parametric benchmark, we consider an asymptotic central limit theorem (CLT) based prediction interval, obtained as: %from the same AR(1) model used in the residual- and standardized-based CP methods, defined as follows:
\begin{equation*}\label{naive} \tag{Benchmark}
  \widehat L_{n+1}
  \;=\;
  \widehat \mu_{n+1}
  \;-\;
  z_{1-\alpha/2}\,\widehat\sigma_{n+1},
  \qquad
  \widehat U_{n+1}
  \;=\;
  \widehat \mu_{n+1}
  \;+\;
  z_{1-\alpha/2}\,\widehat\sigma_{n+1},
\end{equation*}
where $z_{1-\alpha/2}$ is the $(1-\alpha/2)$-quantile of the standard
normal distribution. %This benchmark relies entirely on parametric assumptions and does not provide finite-sample coverage guarantees.\\
\end{description}
%Within the class of one-sided intersection CP procedures, we additionally distinguish between two quantile-based constructions, $\mathcal C_{n+1}^{\rm q\text{-}sgn,\cap}$ and $\mathcal C_{n+1}^{\rm q,\cap}$, which are respectively obtained using the one-sided quantile conformity scores reported in Table~\ref{tab:onesided_cp_scores} and their "truncated" counterparts introduced in \eqref{eq: cqr_onesided}.

Throughout, we set the nominal coverage level to $1-\alpha = 0.9$, with tail-specific miscoverage levels given by $\alpha^+ = \alpha^- = \alpha/2 = 0.95$.

\textbf{Performance metrics.} All methods are compared on the basis of: (1) their marginal validity or {\it global marginal coverage} ($\widehat{\mathrm{Cov}}
=
\frac{1}{RN}\sum_{r=1}^R \sum_{i=1}^N \mathbf 1\{Y_{i}^{(r)}\in \mathcal C_{i}^{(r)}\}$); (2) tail-specific coverage, defined as $\widehat{\mathrm{Cov}}_{L}
=
\frac{1}{RN}\sum_{r=1}^R \sum_{i=1}^N \mathbf 1\{Y_{i}^{(r)}\in \mathcal C_{i}^{L,(r)}\}$ and $\widehat{\mathrm{Cov}}_{U}
=
\frac{1}{RN}\sum_{r=1}^R \sum_{i=1}^N \mathbf 1\{Y_{i}^{(r)}\in \mathcal C_{i}^{U,(r)}\}$, respectively; and (3) mean width (efficiency), given by the interval's length.

\textbf{Results.} Results are reported in Tables~\ref{tab:cp_aci_iid}--\ref{tab:cp_aci_ar1} for all compared methods, in both the exchangeable and non-exchangeable AR(1) settings. Due to a high similarity and for the sake of space, we only report results obtained with the ACI framework, deferring those obtained with the DtACI in Supplementary Material D. Those methods exhibit coverage behavior and efficiency patterns similar to those of the ACI method. 

Overall, all CP methods achieve satisfactory global coverage, with slight undercoverage of the naive benchmark in Gaussian settings. The results reveal pronounced asymmetries in tail-specific coverage for standard CP intervals, as well as the benchmark, under the skewed scenario. Specifically, we note undercoverage over the lower tail and overcoverage over the upper tail, reflecting sensitivity to negative skewness. In contrast, the proposed one-sided intersection approach overcomes this limitation, by calibrating each tail separately, but comes with a slightly decreased efficiency (slightly wider intervals). This is consistent with the non-vanishing gap between the upper and lower probability bounds established in Theorem~\ref{_thm:two_sided_coverage}. The residual $s^{\rm res}$ and the quantile $s^{\rm q\text{-}sgn}$ scores consistently yield the narrowest intervals. An important difference emerges between the intervals
$\mathcal C_{n+1,\underline{\alpha}}^{\rm q\text{-}sgn,\cap}$ and
$\mathcal C_{n+1,\underline{\alpha}}^{\rm q,\cap}$ in the skewed Student-$t$ setting: while the latter achieves satisfactory lower-tail coverage, it exhibits extreme upper-tail overcoverage, reaching a value equal to one. %This behavior is explained by the conformity scores \eqref{eq: cqr_onesided} used to construct $\mathcal C_{n+1,\underline{\alpha}}^{q\text{-}max,\cap}$. Indeed these scores generate ties with positive probability and therefore violate the almost-sure distinctness assumption required for the finite-sample upper coverage bounds in Theorem~\ref{_thm:two_sided_coverage}. As a consequence, under strong negative skewness, the resulting upper prediction set become highly conservative.
The extreme upper-tail overcoverage of $\mathcal{C}_{n+1,\underline{\alpha}}^{\rm q,\cap}$ follows directly from the truncated scores in Eq.~\eqref{eq: cqr_onesided}: under strong
negative skewness, many calibration observations fall below $\hat{q}_{1-\alpha,n_{t}}(x)$,
producing ties at zero in the upper-tail score. This violates the almost-sure distinctness
assumption of Theorem~\ref{_thm:two_sided_coverage} and, because the truncation prevents the
calibration quantile from going negative, the upper bound cannot move sufficiently downward,
yielding coverage equal to one. The non-truncated score $s^{{\rm q\text{-}sgn},{\rm U}}$ does not suffer
from this restriction, and can be therefore considered a better choice in practice. Further details are provided in Supplementary material D.

%=========================================================
% TABLE 1 — IID - ACI
%=========================================================
\begin{table}[htbp]
\centering
\caption{
Empirical coverage and width of compared methods (exchangeable setting), reported as mean $\pm$ standard deviation over $R = 500$ MC runs. All CP intervals are constructed using a standard split CP. Global miscoverage is set at $\alpha = 0.1$, with tail-specific levels to $\alpha/2$. Bold font flags global undercoverage, tail undercoverage, or extreme tail overcoverage.
}
\label{tab:cp_aci_iid}
\scriptsize
\setlength{\tabcolsep}{2.7pt}
\renewcommand{\arraystretch}{1.1}

\begin{tabular}{llcccccc}
\hline
\textbf{Scenario} & \textbf{Method}
& $\widehat{\mathrm{Cov}}$
& $\widehat{\mathrm{Cov}}_L$
& $\widehat{\mathrm{Cov}}_U$
& Mean width
& Median width \\
\hline

%=========================================================
% Normal IID
%=========================================================
Normal IID
& Benchmark
& $\bm{0.898 \pm 0.006}$ & $\bm{0.949 \pm 0.004}$ & $\bm{0.949 \pm 0.004}$
& $3.279 \pm 0.060$ & $3.283 \pm 0.052$ \\
& $\mathcal C_{n+1,\alpha}^{\rm res}$
& $0.903 \pm 0.006$ & $0.952 \pm 0.004$ & $0.952 \pm 0.004$
& $3.337 \pm 0.075$ & $3.307 \pm 0.063$ \\
& $\mathcal C_{n+1,\underline{\alpha}}^{\rm res,\cap}$
& $0.904 \pm 0.006$ & $0.952 \pm 0.004$ & $0.952 \pm 0.004$
& $3.353 \pm 0.075$ & $3.314 \pm 0.062$ \\
& $\mathcal C_{n+1,\alpha}^{\rm s\text{-}res}$
& $0.908 \pm 0.005$ & $0.954 \pm 0.004$ & $0.954 \pm 0.004$
& $3.438 \pm 0.098$ & $3.338 \pm 0.076$ \\
& $\mathcal C_{n+1,\underline{\alpha}}^{\rm s\text{-}res,\cap}$
& $0.909 \pm 0.006$ & $0.954 \pm 0.004$ & $0.954 \pm 0.004$
& $3.483 \pm 0.152$ & $3.345 \pm 0.076$ \\
& $\mathcal C_{n+1,\alpha}^{\rm q}$
& $0.907 \pm 0.005$ & $0.954 \pm 0.004$ & $0.954 \pm 0.004$
& $3.390 \pm 0.072$ & $3.335 \pm 0.074$ \\
& $\mathcal C_{n+1,\underline{\alpha}}^{\rm q,\cap}$
& $0.910 \pm 0.005$ & $0.955 \pm 0.003$ & $0.955 \pm 0.003$
& $3.428 \pm 0.067$ & $3.362 \pm 0.065$ \\
& $\mathcal C_{n+1,\underline{\alpha}}^{\rm q\text{-}sgn,\cap}$
& $0.908 \pm 0.005$ & $0.954 \pm 0.004$ & $0.954 \pm 0.004$
& $3.405 \pm 0.072$ & $3.342 \pm 0.075$ \\[4pt]
%=========================================================
% Student IID
%=========================================================
Student-$t$ IID
& Benchmark
& $0.910 \pm 0.007$ & $0.955 \pm 0.004$ & $0.955 \pm 0.004$
& $4.226 \pm 0.148$ & $4.231 \pm 0.133$ \\
& $\mathcal C_{n+1,\alpha}^{\rm res}$
& $0.903 \pm 0.006$ & $0.952 \pm 0.004$ & $0.951 \pm 0.004$
& $4.116 \pm 0.135$ & $4.070 \pm 0.114$ \\
& $\mathcal C_{n+1,\underline{\alpha}}^{\rm res,\cap}$
& $0.903 \pm 0.006$ & $0.952 \pm 0.004$ & $0.952 \pm 0.004$
& $4.146 \pm 0.137$ & $4.081 \pm 0.112$ \\
& $\mathcal C_{n+1,\alpha}^{\rm s\text{-}res}$
& $0.908 \pm 0.007$ & $0.954 \pm 0.004$ & $0.954 \pm 0.004$
& $4.280 \pm 0.161$ & $4.136 \pm 0.148$ \\
& $\mathcal C_{n+1,\underline{\alpha}}^{\rm s\text{-}res,\cap}$
& $0.909 \pm 0.007$ & $0.954 \pm 0.004$ & $0.954 \pm 0.004$
& $4.342 \pm 0.193$ & $4.147 \pm 0.146$ \\
& $\mathcal C_{n+1,\alpha}^{\rm q}$
& $0.907 \pm 0.006$ & $0.954 \pm 0.004$ & $0.954 \pm 0.004$
& $4.216 \pm 0.139$ & $4.133 \pm 0.147$ \\

& $\mathcal C_{n+1,\underline{\alpha}}^{\rm q,\cap}$
& $0.916 \pm 0.006$ & $0.958 \pm 0.004$ & $0.958 \pm 0.004$
& $4.379 \pm 0.148$ & $4.282 \pm 0.127$ \\

& $\mathcal C_{n+1,\underline{\alpha}}^{\rm q\text{-}sgn,\cap}$
& $0.908 \pm 0.006$ & $0.954 \pm 0.004$ & $0.954 \pm 0.004$
& $4.248 \pm 0.137$ & $4.145 \pm 0.146$ \\[4pt]
%=========================================================
% Skew-t IID
%=========================================================
Skew-$t$ IID
& Benchmark
& $0.937 \pm 0.006$ & $\bm{0.937 \pm 0.006}$ & $\bm{1.000 \pm 0.000}$
& $3.021 \pm 0.173$ & $3.023 \pm 0.153$ \\
& $\mathcal C_{n+1,\alpha}^{\rm res}$
& $0.905 \pm 0.005$ & $\bm{0.905 \pm 0.005}$ & $\bm{1.000 \pm 0.000}$
& $2.286 \pm 0.111$ & $2.218 \pm 0.091$ \\
& $\mathcal C_{n+1,\underline{\alpha}}^{\rm res,\cap}$
& $0.911 \pm 0.018$ & $0.952 \pm 0.004$ & $0.959 \pm 0.016$
& $2.633 \pm 0.135$ & $2.570 \pm 0.093$ \\
& $\mathcal C_{n+1,\alpha}^{\rm s\text{-}res}$
& $0.910 \pm 0.007$ & $\bm{0.910 \pm 0.007}$ & $\bm{1.000 \pm 0.001}$
& $2.457 \pm 0.153$ & $2.285 \pm 0.152$ \\
& $\mathcal C_{n+1,\underline{\alpha}}^{\rm s\text{-}res,\cap}$
& $0.920 \pm 0.020$ & $0.955 \pm 0.005$ & $0.966 \pm 0.017$
& $2.822 \pm 0.182$ & $2.627 \pm 0.134$ \\
& $\mathcal C_{n+1,\alpha}^{\rm q}$
& $0.911 \pm 0.010$ & $\bm{0.911 \pm 0.008}$ & $\bm{1.000 \pm 0.003}$
& $2.432 \pm 0.155$ & $2.319 \pm 0.178$ \\
& $\mathcal C_{n+1,\underline{\alpha}}^{\rm q,\cap}$
& $0.954 \pm 0.004$ & $0.954 \pm 0.004$ & $\bm{1.000 \pm 0.000}$
& $3.356 \pm 0.164$ & $3.282 \pm 0.141$ \\
& $\mathcal C_{n+1,\underline{\alpha}}^{\rm q\text{-}sgn,\cap}$
& $0.923 \pm 0.027$ & $0.954 \pm 0.004$ & $0.969 \pm 0.025$
& $2.740 \pm 0.134$ & $2.634 \pm 0.144$ \\

\hline
\end{tabular}
\end{table}

%=========================================================
% TABLE 2 — AR(1) - ACI
%=========================================================
\begin{table}[htbp]
\centering
\caption{
Empirical coverage and width of compared methods (non-exchangeable setting), reported as mean $\pm$ standard deviation over $R = 500$ MC runs. All CP intervals are constructed using the ACI framework. Global miscoverage is set at $\alpha = 0.1$, with tail-specific levels at $\alpha/2$. Bold font flags global undercoverage, tail undercoverage, or extreme tail overcoverage.
}
\label{tab:cp_aci_ar1}
\scriptsize
\setlength{\tabcolsep}{2.7pt}
\renewcommand{\arraystretch}{1.1}

\begin{tabular}{llcccccc}
\hline
\textbf{Scenario} & \textbf{Method}
& $\widehat{\mathrm{Cov}}$
& $\widehat{\mathrm{Cov}}_L$
& $\widehat{\mathrm{Cov}}_U$
& Mean width
& Median width \\
\hline

%=========================================================
% Normal AR(1)
%=========================================================
Normal AR(1)
& Benchmark
& $\bm{0.898 \pm 0.006}$ & $\bm{0.949 \pm 0.004}$ & $\bm{0.949 \pm 0.004}$
& $3.278 \pm 0.060$ & $3.283 \pm 0.052$ \\
& $\mathcal C_{n+1,\alpha}^{\rm res}$
& $0.901 \pm 0.001$ & $0.950 \pm 0.003$ & $0.950 \pm 0.003$
& $3.325 \pm 0.055$ & $3.296 \pm 0.056$ \\
& $\mathcal C_{n+1,\underline{\alpha}}^{\rm res,\cap}$
& $0.901 \pm 0.001$ & $0.951 \pm 0.001$ & $0.951 \pm 0.001$
& $3.351 \pm 0.059$ & $3.312 \pm 0.059$ \\
& $\mathcal C_{n+1,\alpha}^{\rm s\text{-}res}$
& $0.901 \pm 0.001$ & $0.950 \pm 0.003$ & $0.950 \pm 0.003$
& $3.385 \pm 0.088$ & $3.290 \pm 0.057$ \\
& $\mathcal C_{n+1,\underline{\alpha}}^{\rm s\text{-}res,\cap}$
& $0.901 \pm 0.001$ & $0.951 \pm 0.001$ & $0.951 \pm 0.001$
& $3.452 \pm 0.152$ & $3.307 \pm 0.060$ \\
& $\mathcal C_{n+1,\alpha}^{\rm q}$
& $0.901 \pm 0.001$ & $0.950 \pm 0.003$ & $0.950 \pm 0.003$
& $3.336 \pm 0.057$ & $3.293 \pm 0.057$ \\
& $\mathcal C_{n+1,\underline{\alpha}}^{\rm q,\cap}$
& $0.906 \pm 0.004$ & $0.953 \pm 0.003$ & $0.953 \pm 0.003$
& $3.390 \pm 0.061$ & $3.313 \pm 0.053$ \\
& $\mathcal C_{n+1,\underline{\alpha}}^{\rm q\text{-}sgn,\cap}$
& $0.901 \pm 0.001$ & $0.951 \pm 0.001$ & $0.951 \pm 0.001$
& $3.364 \pm 0.059$ & $3.310 \pm 0.059$ \\[4pt]
%=========================================================
% Student AR(1)
%=========================================================
Student-$t$ AR(1)
& Benchmark
& $0.909 \pm 0.007$ & $0.955 \pm 0.005$ & $0.955 \pm 0.004$
& $4.224 \pm 0.148$ & $4.231 \pm 0.133$ \\
& $\mathcal C_{n+1,\alpha}^{\rm res}$
& $0.901 \pm 0.001$ & $0.950 \pm 0.003$ & $0.950 \pm 0.003$
& $4.104 \pm 0.093$ & $4.053 \pm 0.099$ \\
& $\mathcal C_{n+1,\underline{\alpha}}^{\rm res,\cap}$
& $0.901 \pm 0.001$ & $0.951 \pm 0.001$ & $0.951 \pm 0.001$
& $4.167 \pm 0.116$ & $4.089 \pm 0.099$ \\
& $\mathcal C_{n+1,\alpha}^{\rm s\text{-}res}$
& $0.901 \pm 0.001$ & $0.950 \pm 0.003$ & $0.950 \pm 0.003$
& $4.186 \pm 0.126$ & $4.044 \pm 0.100$ \\
& $\mathcal C_{n+1,\underline{\alpha}}^{\rm s\text{-}res,\cap}$
& $0.901 \pm 0.001$ & $0.951 \pm 0.001$ & $0.951 \pm 0.001$
& $4.286 \pm 0.184$ & $4.079 \pm 0.102$ \\
& $\mathcal C_{n+1,\alpha}^{\rm q}$
& $0.901 \pm 0.001$ & $0.950 \pm 0.003$ & $0.950 \pm 0.003$
& $4.127 \pm 0.096$ & $4.051 \pm 0.100$ \\
& $\mathcal C_{n+1,\underline{\alpha}}^{\rm q,\cap}$
& $0.914 \pm 0.006$ & $0.957 \pm 0.004$ & $0.957 \pm 0.004$
& $4.351 \pm 0.165$ & $4.252 \pm 0.129$ \\
& $\mathcal C_{n+1,\underline{\alpha}}^{\rm q\text{-}sgn,\cap}$
& $0.901 \pm 0.001$ & $0.951 \pm 0.001$ & $0.951 \pm 0.001$
& $4.186 \pm 0.117$ & $4.084 \pm 0.102$ \\[4pt]
%=========================================================
% Skew-t AR(1) ACI
%=========================================================
Skew-$t$ AR(1)
& Benchmark
& $0.937 \pm 0.006$ & $\bm{0.937 \pm 0.006}$ & $\bm{1.000 \pm 0.000}$
& $3.020 \pm 0.172$ & $3.023 \pm 0.153$ \\
& $\mathcal C_{n+1,\alpha}^{\rm res}$
& $0.901 \pm 0.001$ & $\bm{0.901 \pm 0.001}$ & $\bm{1.000 \pm 0.001}$
& $2.233 \pm 0.086$ & $2.155 \pm 0.100$ \\
& $\mathcal C_{n+1,\underline{\alpha}}^{\rm res,\cap}$
& $0.902 \pm 0.002$ & $0.951 \pm 0.001$ & $0.951 \pm 0.001$
& $2.634 \pm 0.107$ & $2.559 \pm 0.088$ \\
& $\mathcal C_{n+1,\alpha}^{\rm s\text{-}res}$
& $0.901 \pm 0.001$ & $\bm{0.902 \pm 0.002}$ & $\bm{0.999 \pm 0.001}$
& $2.310 \pm 0.114$ & $2.147 \pm 0.099$ \\
& $\mathcal C_{n+1,\underline{\alpha}}^{\rm s\text{-}res,\cap}$
& $0.902 \pm 0.002$ & $0.951 \pm 0.001$ & $0.951 \pm 0.002$
& $2.747 \pm 0.167$ & $2.554 \pm 0.090$ \\
& $\mathcal C_{n+1,\alpha}^{\rm q}$
& $0.901 \pm 0.002$ & $\bm{0.902 \pm 0.002}$ & $\bm{0.999 \pm 0.002}$
& $2.272 \pm 0.091$ & $2.163 \pm 0.099$ \\
& $\mathcal C_{n+1,\underline{\alpha}}^{\rm q,\cap}$
& $0.951 \pm 0.001$ & $0.951 \pm 0.001$ & $\bm{1.000 \pm 0.000}$
& $3.311 \pm 0.150$ & $3.219 \pm 0.126$\\
& $\mathcal C_{n+1,\underline{\alpha}}^{\rm q\text{-}sgn,\cap}$
& $0.903 \pm 0.003$ & $0.951 \pm 0.001$ & $0.952 \pm 0.003$
& $2.669 \pm 0.123$ & $2.564 \pm 0.091$ \\
\hline
\end{tabular}
\end{table}

\section{An Application to Financial Forecasting} \label{sec: app}

As motivated in {\bf Example 1}, financial return forecasting presents a canonical problem of two-sided asymmetry: an investor simultaneously cares about downside risk (extreme losses) and upside potential (large gains), yet the left tail carries far more severe consequences and demands stricter probabilistic control. 
%By specifying separate miscoverage rates for each tail, the proposed framework yields a tail-calibrated interval with pre-specified control guarantees on both extremes of the return distribution.
Value-at-Risk (VaR) formalizes the left-tail objective, quantifying the maximum loss that an investment  can experience with a specified probability over a specific period of time~\citep{mcneil2015quantitative}. Its accurate estimation is critical for regulatory compliance, capital allocation, and financial risk management. Despite being central, a framework focused exclusively on VaR foregoes characterization of the upper tail, which carries its own financial relevance, giving explicit information on potential gains and guiding portfolio optimization~\citep{chen2008two, farinelli2008beyond}.

Formally, let $\{Y_n\}_{n \geq 1}$ denote the sequence of daily log-returns of a financial instrument, where $Y_n = \log\!\left(\frac{P_n}{P_{n-1}}\right)$, with $P_n$ and $P_{n-1}$ being the price at times $n$ and $n-1$, respectively. Define $\mathcal{F}_{n-1} = \{Y_1,\dots,Y_{n-1}\}$ as the information set available at time $t-1$. For return series, the VaR at confidence level $\alpha^- \in (0,1)$
is defined as the lower conditional quantile of the
return distribution, namely
\begin{equation*}
\mathrm{VaR}_{\alpha^-}(Y_n)
:=
\inf\bigl\{x \in \mathbb{R} :
\mathbb{P}(Y_n \le x \mid \mathcal{F}_{n-1}) \ge \alpha^-
\bigr\},\quad n \geq 1.
\end{equation*}

%Equivalently, when the conditional return distribution is continuous, we have $\mathbb{P}(y_t \le \mathrm{VaR}_{t,\alpha}\mid \mathcal{F}_{t-1})=\alpha$.
Typical approaches for VaR estimation rely on parametric/semiparametric assumptions or use historical and Monte Carlo simulations. These approaches often rely on strong distributional assumptions and provide no finite-sample guarantees. Even asymptotic validity may fail under model misspecification, nonstationarity,
or time-varying tail behavior.

Conformal methods are especially well suited to this task for two key reasons. First, their distribution-free nature allows valid risk guarantees to be obtained
without strong assumptions on return dynamics. Second, adaptive
conformal procedures (e.g., based on ACI) naturally accommodate non-stationarity, enabling the
resulting VaR estimates to adapt and remain reliable in evolving market conditions. \cite{fantazzini2023adaptive} explored this direction by constructing a classical \emph{two-sided} CQR prediction interval for the returns and interpreting its lower bound as a VaR estimate. While this approach guarantees global marginal coverage, it provides no explicit guarantee for the left tail in isolation, and, symmetrically, none for the right tail. The framework proposed here closes this gap.

In this work, once a tail-calibrated two-sided CP interval for the next-period log-return $Y_{n+1}$ has been constructed following Eq.~\eqref{eq:intersection_interval}, we define the conformal VaR estimator as 
\begin{align*}
    \widehat{\mathrm{VaR}}_{\alpha^-}(Y_{n+1}) = \widehat{\mathrm{VaR}}_{n+1,\alpha^-} := L_{n+1}.
\end{align*}
In virtue of the tail-specific validity provided in Section~\ref{sec: theory}, this construction guarantees $\mathbb{P}\!\left(Y_{n+1} < \widehat{\mathrm{VaR}}_{n+1,\alpha^-}\right) \leq \alpha^-$, resulting in a valid distribution-free approach for VaR estimation. Notably, traditional VaR backtests, including Kupiec or Christoffersen tests~\citep{kupiec1995techniques,christoffersen1998evaluation}, assess
{\it ex post} whether a VaR model attains the desired violation or miscoverage frequency. In contrast, with the proposed CP framework, this property is achieved theoretically {\it ex ante} by design. An application to a set of financial stocks will now enforce the theoretical validity of the proposed tail-calibrated CP approach in comparison to classical two-sided CP intervals. 
%through an online feedback mechanism, effectively embedding backtesting into the construction of the VaR itself.

% \paragraph*{Benchmark Volatility Model for Daily Returns}
\textbf{Prediction model.} As a benchmark model specification for the one-step-ahead VaR forecasting, we consider a GARCH(1,1) model with constant conditional mean $\mu$ and time-varying conditional volatility $\sigma_n$, defined as
\begin{align*}
Y_n &= \mu + \varepsilon_n, \qquad
\varepsilon_n = \sigma_n z_n, \qquad
z_n \sim t_\nu,\qquad n = 1,2,\dots \\
\sigma_n^2 &= \omega + \alpha \varepsilon_{n-1}^2 + \beta \sigma_{n-1}^2,
\end{align*}
where $\{z_n\}_{n \geq 1}$ is an i.i.d. sequence of standardized Student's $t_\nu$ random variables with $\nu$ degrees of freedom. This model specification captures salient features of financial return series, including volatility clustering, conditional heteroskedasticity, and heavy tails, through the interaction of dynamic conditional variance and non-Gaussian innovations. 

Beyond being employed as the underlying predictive model in all CP procedures, this model also serves as a standard parametric benchmark. %In the latter case, a two-sided conformal prediction interval for returns is first constructed, and its lower endpoint is subsequently interpreted as the VaR estimate.
The one-step-ahead VaR at confidence level $\alpha^-$ can be obtained as
\begin{equation*}
\mathrm{VaR}_{n+1,\alpha^-}
=
\hat{\mu}_{n+1}
+
t^{-1}_{\alpha^-,\hat{\nu}}
\sqrt{\frac{\hat{\nu}-2}{\hat{\nu}}}\,
\hat{\sigma}_{n+1},
\end{equation*}
where $\hat{\mu}_{n+1}$ and $\hat{\sigma}_{n+1}^2$ denote the one-step-ahead forecasts of the conditional mean and variance, respectively, while $t^{-1}_{\alpha^-,\hat{\nu}}$ is the $\alpha^-$-quantile of the Student's $t$ distribution with estimated degrees of freedom $\hat{\nu}$.

\textbf{Financial data.} We consider three exchange-traded funds (ETFs) with different risk characteristics: (a) SPY, which tracks the S\&P 500 and represents the broad U.S. equity market; (b) TQQQ, a leveraged ETF linked to the Nasdaq-100 and characterized by high volatility; and (c) XLE, which tracks the U.S. energy sector and is strongly exposed to commodity-price shocks. TQQQ and XLE are analyzed over the 2019-2021 period, selected to assess the COVID-19 market shock, with abrupt volatility changes, severe downside movements, and negatively skewed return distributions. SPY is evaluated over a longer time horizon, 2018-2025, to evaluate multiple market regimes, including turbulent and more stable periods.

\textbf{CP specification.} Since financial return time series are generally nonexchangeable, the more robust DtACI framework is employed, using all three conformal scores of interest. The following grid of learning-rate parameters for DtACI is considered: $\gamma \in \{0.005,\;0.008,\;0.010,\;0.015,\;0.020\}$. For the proposed CP approach, the target lower-tail miscoverage level is set to $\alpha^- = \alpha/2 = 0.10$; for the classical two-sided CP intervals, %corresponding to a nominal lower-tail coverage of $90\%$ for the one-sided conformal procedures. For the two-sided procedures, we set 
the overall miscoverage level is set to $\alpha = 0.20$. %, with the total error probability allocated symmetrically across the two tails, i.e., $\alpha^{+}=\alpha^{-}=0.10$.
This ensures that the lower-tail coverage remains directly comparable across one-sided and two-sided methods. 

\textbf{Results.} Figure \ref{fig:coverage_prediction_intervals} summarizes the empirical performance of the compared methods across the considered assets. The left panels report the average empirical lower-tail coverage over time, reflecting the risk-management performance using VaR.
Across all assets, both the GARCH-based VaR forecasts and the classical two-sided CP procedures systematically exhibit undercoverage relative to the nominal target $\alpha^-$. This is accompanied by simultaneous overcoverage with respect to the nominal upper-tail target level $\alpha^+$ (see Figure 1 in the Supplementary material E). By contrast, the one-sided procedures consistently achieve the closest alignment with the desired coverage level, confirming their theoretical validity.

The right panels of Figure~\ref{fig:coverage_prediction_intervals} compare the resulting log-return forecast intervals, along with the observed time series. %All methods, except those based on classical residual scores $s^{\text{res}}$, react sharply during periods of market stress, most notably during the 2020 COVID-19 crisis, when VaR estimates increase substantially.  %Compared to GARCH forecasts, all CP procedures adapt more rapidly to changing volatility conditions. 
The one-sided CP estimates are slightly more conservative in stressed periods, which is desirable from a downside risk management perspective. % When comparing the different scores, we observe that the standardized residual and the quantile scores, as well as the benchmark, tend to react more sharply to changes in market conditions than the residual-based version.
When comparing the different methods, we observe that the standardized residual $s^{\text{s-res}}$ and the signed quantile residual $s^{\text{q-sgn}}$, as well as the GARCH model, react more sharply to changes in market conditions (especially during the 2020 COVID-19 crisis) than the residual $s^{\text{res}}$. This behavior is expected in heteroskedastic environments such as financial returns, since these methods explicitly account for the conditional variance.
%Overall, the empirical evidence supports the use of one-sided adaptive conformal prediction for VaR estimation. By directly controlling lower-tail exceedances in an online manner, these methods provide more reliable risk forecasts than standard parametric approaches while preserving flexibility under nonstationary market conditions.

%=========================================================
% MULTIPANEL — LOWER COVERAGE + PI
%=========================================================
\begin{figure}[htbp]
\centering

\includegraphics[
width=\textwidth
]{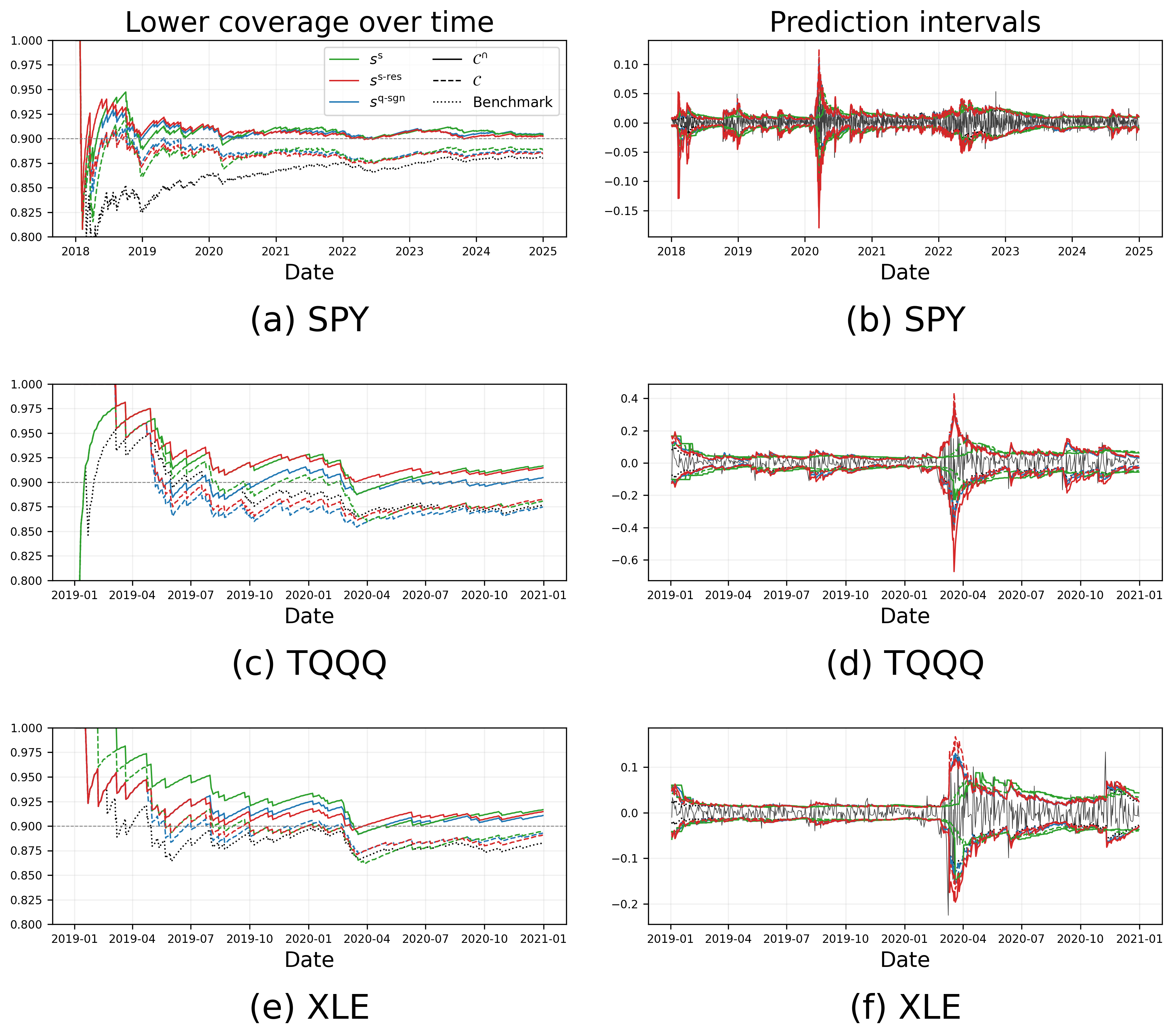}

\caption{
Comparison of lower-tail coverage (left panels) and prediction intervals (right panels) over time for SPY, TQQQ, and XLE obtained using the benchmark forecast model GARCH-$t$ in isolation and in combination with the CP procedures under the residual $s^{\text{res}}$ (res), s-residual $s^{\text{s-res}}$ (s-res), and quantile scores $s^{\text{q-sgn}}$ (q-sgn).}
\label{fig:coverage_prediction_intervals}

\end{figure}

\section{Conclusion}
This paper develops a conformal prediction methodology designed to control lower- and upper-tail prediction errors separately, rather than only enforcing the usual global marginal coverage requirement at level $1-\alpha$. %Within the split CP framework, we construct separate lower and upper one-sided prediction intervals and combine them through intersection to obtain a two-sided prediction set. 
We establish coverage guarantees for each tail and derive the corresponding global coverage properties of the resulting interval. The analysis covers both the exchangeable case, where the guarantees are finite-sample, and the non-exchangeable case, where adaptive procedures yield asymptotic validity, as well as short-term regret bounds. The simulation results indicate that the proposed construction substantially improves tail-specific calibration compared with standard two-sided conformal intervals, particularly in skewed and heavy-tailed environments. A financial application further confirms these findings: only the intersection-based procedures are able to deliver reliable tail-specific coverage, at the cost of slightly less efficient prediction intervals.

Several directions for future research emerge naturally from the present work. First, while the proposed intersection-based construction provides explicit tail-specific coverage guarantees, it does not recover the tight finite-sample global validity achieved by classical two-sided conformal prediction. %Developing joint calibration mechanisms capable of simultaneously preserving directional validity and optimal global coverage efficiency therefore represents an important open problem. Such 
Extensions may consider coupled-tail calibration procedures beyond the independent treatment of the two tails adopted here. Second, extending the framework toward weighted conformal inference schemes \citep{tibshirani2019conformal,schmitt2024taming} appears particularly promising in non-stationary settings, where adaptive weighting mechanisms may allow the calibration procedure to respond more effectively to evolving distributional regimes and covariate shifts over time. Third, integrating the proposed tail-specific construction within a Bayesian conformal prediction framework~\citep{deliu2026interplay} represents another promising direction. By incorporating prior and posterior uncertainty into the calibration mechanism, such an extension may lead to more informative prediction intervals and potential efficiency gains.

\bibliography{main}

\newpage
\appendix
\begin{center}  
{\bf \Large Supplementary Material to\\
\textbf{\textit{ Conformal Prediction Intervals with Tail-Specific Guarantees}}}
\end{center}

\section{Technical background on CQR}

In quantile regression, given a feature $X_{n+1}=x$, one can construct $1-\alpha$ prediction intervals for $Y_{n+1}$, by considering the conditional distribution $F(y \mid X_{n+1} = x)$, with $F$ denoting the cumulative density function, and modeling its \textit{conditional quantiles}~\citep{koenker2005quantile}. For a given level $\tau \in (0,1)$, the {\it conditional quantile function} at $X_{n+1}=x$ is defined as
\begin{equation*}
q_\tau(x) = \inf\{ y \in \mathbb{R} : F(y \mid X_{n+1} = x) \ge \tau \}.
\label{eq:quantile_function}
\end{equation*}

A quantile regression estimator, say $\widehat{q}_{\tau}(x)$, is obtained as the solution of the problem
\begin{equation}
\widehat{q}_\tau(x)=f(x;\widehat{\theta}_\tau),
\qquad
\widehat{\theta}_\tau
=
\arg\min_{\theta}
\frac{1}{n}\sum_{i=1}^{n}
\ell_\tau\!\bigl(Y_i, f(X_i;\theta)\bigr)
+
\mathcal{R}(\theta),
\label{eq:quantile_regression}
\end{equation}
where $f(x;\theta)$ is a parametric or nonparametric model (e.g., linear, tree-based, or neural net) posited for the quantile function, $\mathcal{R}(\theta)\geq 0$ denotes a regularization penalty (set to zero in the unpenalized case), and $\ell_\tau(y,q)$ is the ``pinball'' loss function defined as
\begin{equation*}
\ell_\tau(y,q)
:=
\tau (y-q)-\min\{0,y-q\}
=
\begin{cases}
(1-\tau)(q-y), & y<q,\\[2pt]
\tau (y-q), & y\ge q,
\end{cases}
\label{eq:pinball_loss_qr}
\end{equation*}

On this basis, a $1-\alpha$ conditional prediction interval can be obtained as
\begin{equation*}
\mathcal{}{C}(x) = \big[\widehat{q}_{\alpha/2}(x),\ \widehat{q}_{1-\alpha/2}(x)\big],
\label{eq:conditional_interval}
\end{equation*}
which, nonetheless, since the estimated quantiles $\widehat{q}_\tau(x)$ are subject to model misspecification and finite-sample errors, is not ensured to have nominal $1-\alpha$ coverage.

CP provides a correction mechanism to quantile regression to achieve valid coverage without assumptions on the distribution of $(X,Y)$, except exchangeability. Following the split CP rationale, first, a lower and an upper quantile regression model is fitted on $\mathcal{D}_{n_{t}}$ to get:
\[
\widehat{q}_{\alpha/2, n_{t}}(x) = f(x; \widehat{\theta}_{\alpha/2}),
\quad
\widehat{q}_{1-\alpha/2, n_{t}}(x) = f(x; \widehat{\theta}_{1-\alpha/2}),
\]
each minimizing its respective optimization problem as in Eq.~\eqref{eq:quantile_regression}. Then, the calibration set is used to compute quantile conformity scores
\begin{equation*}
s^{\text{q}}_i = \max\big\{
\widehat{q}_{\alpha/2, n_{t}}(X_i) - Y_i,\;
Y_i - \widehat{q}_{1-\alpha/2, n_{t}}(X_i)
\big\},\qquad i = 1,\dots, n_{c},
\label{eq:cqr_scores}
\end{equation*}
and the CQR interval for $X_{n+1}$ is obtained as
\begin{equation*}
\mathcal{C}^q_{n+1, \alpha}=
\big[
\widehat{q}_{\alpha/2, n_{t}}(X_{n+1}) - Q^{\text{q}}_{\alpha, n_{c}},\;
\widehat{q}_{1-\alpha/2, n_{t}}(X_{n+1}) + Q^{\text{q}}_{\alpha, n_{c}}
\big],
\label{eq:cqr_interval}
\end{equation*}
with $Q^{\text{q}}_{\alpha, n_{c}}$ the empirical $(1-\alpha)(1 + 1/n_{c})$-quantile of the calibration scores $S^{\text{q}}_1,\dots, S^{\text{q}}_{n_{c}}$, as per standard split CP.

\section{Implementation Details and Hyperparameter Choices} \label{sec:implementation}

This section collects implementation details for the DtACI procedure that
are needed for reproducibility but would interrupt the flow of the main text.
We describe the hyperparameter choices adopted throughout the simulations and
the empirical application. We also provide the pseudocode for the tail-calibrated DtACI in Algorithm \ref{alg:onesidedtaci}.

\paragraph*{Weight regularization.}
To prevent weight degeneracy and ensure persistent exploration across all
candidates, the updated weights are mixed with the uniform distribution as
\[
w_{i+1}^j
=
(1-\sigma)\,\tilde{w}_{i+1}^j
+
\sigma\,\frac{1}{k}\sum_{l=1}^k \tilde{w}_{i+1}^l,\qquad i\ge n+1,
\]
where $\sigma\in(0,1)$ controls the amount of exploration.

\paragraph*{Tuning parameters.}
Decaying choices of $\eta_i^\pm$ and $\sigma_i^\pm$ are appropriate only
when the degree of nonstationarity is itself stable over time, an assumption
that is often unrealistic. We therefore use constant tuning parameters,
following \citet{gibbs2024conformal}, namely
\[
\eta^\pm
=
\sqrt{\frac{3}{|I|}}
\sqrt{
\frac{\log(k|I|)+2}
{(1-\alpha^\pm)^2(\alpha^\pm)^2}
},
\qquad
\sigma^\pm=\frac{1}{2|I|},
\]
with $|I|=500$. Although constant or slowly varying choices may induce an
asymptotic bias in the average miscoverage rate, this effect has been found
negligible in practice \citep{gibbs2024conformal}.

\paragraph*{Deterministic aggregation.}
To avoid the additional randomness induced by sampling $\alpha_i^\pm$, we
use the deterministic aggregated level $\bar{\alpha}_i^\pm := \sum_{l=1}^k
p_i^{l,\pm}\alpha_i^{l,\pm}$ in place of drawing from $\sum_{l=1}^k
p_i^{l,\pm}\delta_{\alpha_i^{l,\pm}}$. This variant satisfies the same
regret guarantee and behaves almost identically in practice
\citep{gibbs2024conformal}. Accordingly, we adopt $\bar{\alpha}_i^\pm$
throughout both the simulations and the empirical application.

\begin{algorithm}[t]
\caption{DtACI intervals with tail-specific guarantees}
\label{alg:onesidedtaci}
\begin{algorithmic}[1]
\REQUIRE Data $\mathcal D_n$; tail miscoverage levels $\alpha^-,\alpha^+$; candidate learning rates $\{\gamma_j^\pm\}_{j=1}^k$; initial adaptive levels $\{\alpha_{n+1}^{j,\pm}\}_{j=1}^k$; exploration parameters $\sigma^\pm$; exponential weighting rates $\eta^\pm$.
\STATE Initialize expert weights
$
w_{n+1}^{j,\pm} \leftarrow 1,
\quad \forall j=1,\dots,k.
$

\FOR{$i = n+1,n+2,\ldots,N$}

    \STATE Define aggregation probabilities $p_i^{j,\pm}
    :=
    \frac{w_i^{j,\pm}}
    {\sum_{l=1}^{k} w_i^{l,\pm}},
    \qquad j=1,\dots,k.$
    
    \STATE Set
    $
    \alpha_i^\pm
    $
    by sampling from
    $
    \{\alpha_i^{j,\pm}\}_{j=1}^k
    $
    with probabilities
    $
    \{p_i^{j,\pm}\}_{j=1}^k
    $,
    or equivalently take
    $
    \alpha_i^\pm
    =
    \sum_{j=1}^k p_i^{j,\pm}\alpha_i^{j,\pm}.
    $
    \STATE Construct intervals
    $
    \mathcal{C}_{i,\alpha_i^-}^{L}
    $
    and
    $
    \mathcal{C}_{i,\alpha_i^+}^{U}
    $
    according to Table~\ref{tab:onesided_cp_scores}.

    \STATE Construct the intersection interval $\mathcal{C}_{i,\underline{\alpha}_{i}}^{\cap}
    :=
    \mathcal{C}_{i,\alpha_i^-}^{L}
    \cap
    \mathcal{C}_{i,\alpha_i^+}^{U}$, with $\underline{\alpha}_{i}=(\alpha_i^-,\alpha_i^+)$

    \STATE Define $\beta_i^-:=
    \sup\Bigl\{
    \beta \in [0,1]:
    Y_i \in \mathcal{C}_{i,\beta}^{L}
    \Bigr\}$, $\beta_i^+
    :=
    \sup\Bigl\{
    \beta \in [0,1]:
    Y_i \in \mathcal{C}_{i,\beta}^{U}
    \Bigr\}.$

    \STATE $\tilde{w}_i^{j,\pm}
    \leftarrow
    w_i^{j,\pm}
    \exp\!\bigl(
    -\eta^\pm
    \ell(\beta_i^\pm,\alpha_i^{j,\pm})
    \bigr),
    \qquad j=1,\dots,k.$

    \STATE $\tilde{W}_i^\pm
    :=
    \sum_{j=1}^{k}
    \tilde{w}_i^{j,\pm}.$

    \STATE $w_{i+1}^{j,\pm}
    \leftarrow
    (1-\sigma^\pm)\tilde{w}_i^{j,\pm}
    +
    \sigma^\pm \tilde{W}_i^\pm/k,
    \qquad j=1,\dots,k.$

    \STATE $\mathrm{err}_i^{j,-}:=\mathbf{1}\{Y_i \notin \mathcal{C}_{i,\alpha_i^{j,-}}^{L}\}, \quad
    \mathrm{err}_i^{j,+}
    :=
    \mathbf{1}\{Y_i \notin \mathcal{C}_{i,\alpha_i^{j,+}}^{U}\},
    \qquad j=1,\dots,k.$

    \STATE $\alpha_{i+1}^{j,\pm}
    \leftarrow
    \alpha_i^{j,\pm}
    +
    \gamma_j^\pm
    \bigl(
    \alpha^\pm
    -
    \mathrm{err}_i^{j,\pm}
    \bigr),
    \qquad j=1,\dots,k.$

\ENDFOR
\end{algorithmic}
\end{algorithm}

\section{Proofs for the Results in Section~4}

For clarity and completeness, this section collects the formal statements of all theorems and propositions presented in Section~4 of the main text, together with their proofs.

\begin{proposition}[Tail-specific coverage guarantees]
\label{_thm:onesided_validity_three_families}
Under exchangeability, the lower/upper one-sided split CP intervals in Table~\ref{tab:onesided_cp_scores} satisfy tail-specific $1-\alpha^-$ / $1-\alpha^+$ coverage guarantees:
\begin{equation}
\mathbb P\!\left(Y_{n+1}\in\mathcal C_{n+1,\alpha^-}^L\right)\ge 1-\alpha^-,
\qquad
\mathbb P\!\left(Y_{n+1}\in\mathcal C_{n+1,\alpha^+}^{U}\right)\ge 1-\alpha^+.
\label{_eq:onesided_lower_bounds_general}
\end{equation}
Moreover, if the corresponding scores are almost surely distinct, then
\begin{equation}
\mathbb P\!\left(Y_{n+1}\in \mathcal C_{n+1,\alpha^-}^L\right)\le 1-\alpha^-+\frac{1}{n_{c}+1},
\quad 
\mathbb P\!\left(Y_{n+1}\in\mathcal C_{n+1,\alpha^+}^{U}\right)\le 1-\alpha^+ + \frac{1}{n_{c}+1}.
\label{_eq:onesided_upper_bounds_general}
\end{equation}
\end{proposition}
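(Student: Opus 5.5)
The plan is to reduce each one-sided statement to the standard split-conformal quantile lemma, applied to the scores $S_1,\dots,S_{n_c},S_{n+1}$ with $S_{n+1}:=s(X_{n+1},Y_{n+1})$. I treat the lower interval; the upper case is symmetric, with $\alpha^+$ and the $U$-scores in place of $\alpha^-$ and the $L$-scores. Fix one of the lower scores in Table~\ref{tab:onesided_cp_scores}, say $s^{\rm L}\in\{s^{\rm res,L},s^{\rm s\text{-}res,L},s^{\rm q\text{-}sgn,L}\}$ (or the truncated $s^{\rm q,L}$ of Eq.~\eqref{eq: cqr_onesided}). Condition on the proper training set $\mathcal D_{n_t}$. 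Then $\hat f_{n_t},\hat\sigma_{n_t},\hat q_{\alpha^-,n_t}$ are fixed functions, so $s^{\rm L}$ is a fixed measurable map. By exchangeability of the calibration pairs together with the test pair, the $n_c+1$ scores are exchangeable.

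The first step is to identify the coverage event with a score inequality. For every score in the table, $s^{\rm L}(x,y)$ is strictly decreasing in $y$: it is affine with slope $-1$, or $-1/\hat\sigma_{n_t}(x)$ with $\hat\sigma_{n_t}>0$. Hence
\[
\{Y_{n+1}\in\mathcal C^L_{n+1,\alpha^-}\}=\{S_{n+1}\le Q_{\alpha^-,n_c}\},
\]
where $Q_{\alpha^-,n_c}$ is the $\lceil(1-\alpha^-)(n_c+1)\rceil$-th smallest of $S_1,\dots,S_{n_c}$, set to $+\infty$ if this index exceeds $n_c$. This is exactly the rearrangement the paper performed for $s^{\rm q,L}$. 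The endpoints listed in the table are simply the solutions of this inequality. For the truncated score, quasi-convexity gives the same set identity, and the bound $Q\ge0$ handles the flat region.

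The second step is the lower bound. Write $k=\lceil(1-\alpha^-)(n_c+1)\rceil$. The key observation is that $S_{n+1}\le S_{(k)}$ (order statistic among the calibration scores) holds if and only if $S_{n+1}$ is among the $k$ smallest of all $n_c+1$ scores, with ties broken in its favour. By exchangeability, the rank of $S_{n+1}$ is stochastically dominated by a uniform variable on $\{1,\dots,n_c+1\}$, so
\[
\mathbb P(S_{n+1}\le Q)\ge \frac{k}{n_c+1}\ge 1-\alpha^- .
\]
Integrating over $\mathcal D_{n_t}$ then removes the conditioning.

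The third step is the upper bound. If the scores are almost surely distinct, the rank is exactly uniform, so
\[
\mathbb P(S_{n+1}\le Q)=\frac{k}{n_c+1}\le\frac{(1-\alpha^-)(n_c+1)+1}{n_c+1}=1-\alpha^-+\frac{1}{n_c+1}.
\]
This is precisely the argument of \cite{lei2018distribution}, Theorem 2.1 and Lemma 2, which I would invoke directly.

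The only point needing care is the set identity in the first step. It requires monotonicity, or at least quasi-convexity, in $y$, so that the one-sided sets really are half-lines equal to $\{S_{n+1}\le Q\}$; it also requires $\hat\sigma_{n_t}>0$. The truncated score $s^{\rm q,L}$ has ties at $0$ with positive probability, so the upper bound is claimed only under distinctness, which $s^{\rm q,L}$ may violate. The lower bound still holds for it.
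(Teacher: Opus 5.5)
Your proposal is correct and takes the paper's route. The paper's proof just defers to the split-conformal quantile-lemma argument of \cite{lei2018distribution}, and you write that argument out: condition on $\mathcal D_{n_t}$, use exchangeability of the $n_c+1$ scores, and bound the rank of $S_{n+1}$ to get probability at least $k/(n_c+1)$ with $k=\lceil(1-\alpha^-)(n_c+1)\rceil$, with equality when the scores are almost surely distinct.
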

\begin{proof}
The proof follows verbatim the theorems in \cite{lei2018distribution}
\end{proof}
\begin{theorem}[Global coverage guarantees]
\label{_thm:two_sided_coverage}
Assume all conformity score functions are quasi-convex in $y$. Let $L_{n+1}$ and $U_{n+1}$ be the lower and upper one-sided CP interval bounds for $n+1$, with miscoverage levels $\alpha^{\pm}\in(0,1)$. Let $\underline{\alpha}
:=
(\alpha^-,\alpha^+)$, and define %the associated CP interval obtained as the intersection of the two one-sided CP intervals, that is,
\[
\mathcal C_{n+1,\underline{\alpha}}^{\cap}
:=
[L_{n+1},U_{n+1}]
=
\mathcal C_{n+1,\alpha^-}^{L}
\cap
\mathcal C_{n+1,\alpha^+}^{U}.
\]
Under exchangeability, if the one-sided marginal guarantees  in Eq.~\eqref{_eq:onesided_lower_bounds_general} hold, then
\begin{equation*}
1-(\alpha^{-}+\alpha^{+})
\;\le\;
\mathbb P\!\left(Y_{n+1}\in \mathcal C_{n+1,\underline{\alpha}}^{\cap}\right).
\end{equation*}
Moreover, if the corresponding scores are almost surely distinct, so that \eqref{_eq:onesided_upper_bounds_general} holds, then
\begin{equation*}
1-(\alpha^{-}+\alpha^{+})
\;\le\;
\mathbb P\!\left(Y_{n+1}\in \mathcal C_{n+1,\underline{\alpha}}^{\cap}\right)
\;\le\;
1-\max\{\alpha^{-},\alpha^{+}\}
+\frac{1}{n_{c}+1}.
\end{equation*}
In particular, if $\alpha^{-}=\alpha^{+}=\alpha/2$, then $1-\alpha
\;\le\;
\mathbb P\!\left(Y_{n+1}\in \mathcal C_{n+1,\underline{\alpha}}^{\cap}\right)
\;\le\;
1-\frac{\alpha}{2}
+\frac{1}{n_{c}+1}$.
\end{theorem}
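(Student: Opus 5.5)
The plan is to reduce the statement to elementary inequalities for the events $A^- := \{Y_{n+1} < L_{n+1}\}$ and $A^+ := \{Y_{n+1} > U_{n+1}\}$. By quasi-convexity of the scores, each one-sided set is a half-line: $\mathcal C^{L}_{n+1,\alpha^-}=[L_{n+1},\infty)$ and $\mathcal C^{U}_{n+1,\alpha^+}=(-\infty,U_{n+1}]$. Hence the intersection is $[L_{n+1},U_{n+1}]$, with the convention that it is empty if $L_{n+1}>U_{n+1}$. Its complement is exactly $A^-\cup A^+$, and $\{Y_{n+1}\notin\mathcal C^L\}=A^-$, $\{Y_{n+1}\notin\mathcal C^U\}=A^+$. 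The one-sided guarantees of Proposition~\ref{_thm:onesided_validity_three_families} then read $\mathbb P(A^\pm)\le\alpha^\pm$, and, under distinct scores, $\mathbb P(A^\pm)\ge \alpha^\pm-1/(n_c+1)$.

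For the lower bound, we apply a union bound:
\[
\mathbb P\big(Y_{n+1}\notin\mathcal C^{\cap}_{n+1,\underline{\alpha}}\big)=\mathbb P(A^-\cup A^+)\le \mathbb P(A^-)+\mathbb P(A^+)\le \alpha^-+\alpha^+ .
\]
This gives $1-(\alpha^-+\alpha^+)\le \mathbb P(Y_{n+1}\in\mathcal C^{\cap})$. The step uses only \eqref{_eq:onesided_lower_bounds_general} and needs no assumption on how the two calibrations depend on each other. This matters because the lower and upper scores are computed on the same calibration set, so $A^-$ and $A^+$ are generally dependent.

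For the upper bound, we use monotonicity of the intersection, $\mathcal C^{\cap}\subseteq\mathcal C^{L}$ and $\mathcal C^{\cap}\subseteq\mathcal C^{U}$. This yields $\mathbb P(Y_{n+1}\in\mathcal C^{\cap})\le\min\{\mathbb P(Y_{n+1}\in\mathcal C^L),\mathbb P(Y_{n+1}\in\mathcal C^U)\}$. Applying \eqref{_eq:onesided_upper_bounds_general} to each term gives $\min\{1-\alpha^-,1-\alpha^+\}+1/(n_c+1)=1-\max\{\alpha^-,\alpha^+\}+1/(n_c+1)$. Substituting $\alpha^-=\alpha^+=\alpha/2$ gives the final special case.

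The argument is short; the only delicate point is the first step. We must check that quasi-convexity really makes each one-sided set a half-line with the stated endpoint, as in the explicit forms of Table~\ref{tab:onesided_cp_scores}. We also need the case $L_{n+1}>U_{n+1}$ to be handled consistently: there the intersection is empty and $Y_{n+1}$ necessarily lies in $A^-\cup A^+$, so both bounds remain valid. I would also remark that the union bound cannot be tightened to equality without a disjointness argument. That argument fails exactly when the interval is empty, which is why the gap between the two bounds does not vanish as $n_c\to\infty$.
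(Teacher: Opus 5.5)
Your proposal is correct and matches the paper's proof step for step. For the lower bound it applies Boole's inequality to $\{Y_{n+1}<L_{n+1}\}\cup\{Y_{n+1}>U_{n+1}\}$ together with \eqref{_eq:onesided_lower_bounds_general}, and for the upper bound it uses $\mathcal C^{\cap}_{n+1,\underline{\alpha}}\subseteq\mathcal C^{L}_{n+1,\alpha^-}$, $\mathcal C^{\cap}_{n+1,\underline{\alpha}}\subseteq\mathcal C^{U}_{n+1,\alpha^+}$ together with \eqref{_eq:onesided_upper_bounds_general}; your extra remarks are sound refinements the paper does not make, namely the explicit treatment of the empty case $L_{n+1}>U_{n+1}$ and the observation that the $\min$-bound gives up the disjointness of the two tail events available whenever $L_{n+1}\le U_{n+1}$.
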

\begin{proof}
We establish the lower (A) and upper (B) bounds separately. 

(A) We consider the complement event of $\{Y_{n+1}\in C_{n+1,\underline{\alpha}}^{\cap}\}$, that is, $\{Y_{n+1}\notin \mathcal C_{n+1,\underline{\alpha}}^{\cap}\}
=
\{Y_{n+1}<L_{n+1}\}\cup\{Y_{n+1}>U_{n+1}\}$. %That is, $Y_{n+1}$ falls outside the interval whenever it lies below the lower threshold or above the upper threshold. 
Applying Boole’s inequality we get $\mathbb P(Y_{n+1}\notin \mathcal C_{n+1,\underline{\alpha}}^{\cap})
\le
\mathbb P(Y_{n+1}<L_{n+1})
+
\mathbb P(Y_{n+1}>U_{n+1})$. By Proposition~\ref{_thm:onesided_validity_three_families}-\eqref{_eq:onesided_lower_bounds_general}, these two probabilities are bounded by
$\alpha^{-}$ and $\alpha^{+}$, respectively. Hence, $\mathbb P(Y_{n+1}\notin \mathcal C_{n+1,\underline{\alpha}}^{\cap})
\le
\alpha^{-}+\alpha^{+}$. Taking complements yields $\mathbb P(Y_{n+1}\in \mathcal C_{n+1,\underline{\alpha}}^{\cap})
\ge
1-(\alpha^{-}+\alpha^{+})$.

(B) By construction, $\{Y_{n+1}\in C_{n+1,\underline{\alpha}}^{\cap}\}
=
\{Y_{n+1}\ge L_{n+1}\}\cap\{Y_{n+1}\le U_{n+1}\}$. Since the intersection of two events is contained
in each of them, we have $\{Y_{n+1}\in C_{n+1,\underline{\alpha}}^{\cap}\}
\subseteq
\{Y_{n+1}\ge L_{n+1}\}$ and $\{Y_{n+1}\in C_{n+1,\underline{\alpha}}^{\cap}\}
\subseteq
\{Y_{n+1}\le U_{n+1}\}$. By monotonicity of probability measures, it follows that $\mathbb P(Y_{n+1}\in C_{n+1,\underline{\alpha}}^{\cap})
\le
\min\bigl\{
\mathbb P(Y_{n+1}\ge L_{n+1}),\,
\mathbb P(Y_{n+1}\le U_{n+1})
\bigr\}$. %Substituting the finite-sample one-sided upper bounds yields
By Proposition~\ref{_thm:onesided_validity_three_families}-\eqref{_eq:onesided_upper_bounds_general}, we get
\begin{align*}
\mathbb P\!\left(Y_{n+1}\in C_{n+1,\underline{\alpha}}^{\cap}\right)
&\le
\min\!\Big\{
1-\alpha^{-}+\frac{1}{n_{c}+1},\,
1-\alpha^{+}+\frac{1}{n_{c}+1}
\Big\} \\
&=
1-\max\{\alpha^{-},\alpha^{+}\}
+\frac{1}{n_{c}+1}.
\end{align*}
This concludes the proof.
\end{proof}

\begin{lemma}
\label{lem:aci_lower}
Under the same conditions as \citet{gibbs2021adaptive}, namely, that the empirical quantile functions $\widehat{Q}_{\alpha,n_{c}}^{L,U}$ are non-decreasing with $\widehat{Q}_{\alpha,n_{c}}^{L,U}(x)=-\infty$ for $x<0$ and $\widehat{Q}_{\alpha,n_{c}}^{L,U}(x)=+\infty$ for $x>1$, with probability one, for all $i \in \mathbb N$,
\[
\alpha_i^{\pm} \in [-\gamma,\,1+\gamma].
\]
\end{lemma}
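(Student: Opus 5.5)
The argument is an induction on $i$, following Lemma~4.1 of \citet{gibbs2021adaptive}, run separately for the lower recursion $\alpha_{i+1}^- = \alpha_i^- + \gamma(\alpha^- - \mathrm{err}_i^-)$ and the upper recursion for $\alpha_i^+$. The two cases are identical, since each recursion sees only its own one-sided interval. So I fix one sign and write $\alpha_i$, $\alpha^\star\in(0,1)$ and $\mathrm{err}_i$.

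\textbf{Key observation.} The boundary conventions on $\widehat Q^{L,U}_{\alpha,n_c}$ force the error indicator at the extremes.

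If $\alpha_i<0$, the calibrated threshold is $+\infty$. The one-sided interval is then all of $\mathbb R$, for example $[\hat q(X_i)-\infty,+\infty)$ in the lower case. Hence $\mathrm{err}_i=0$.

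If $\alpha_i>1$, the threshold is $-\infty$. The interval is empty, so $\mathrm{err}_i=1$.

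To make this precise, I will fix the convention used in the paper. The interval is indexed by $\alpha$ through $\widehat Q(1-\alpha)$, so monotonicity in $1-\alpha$ is what matters. I will also spell out that the sign of the threshold enters each score in Table~\ref{tab:onesided_cp_scores} with the correct orientation.

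\textbf{Base case.} I take $\alpha_1^\pm\in[0,1]$, as assumed in Proposition~\ref{prop:aci_lower}, so $\alpha_1\in[-\gamma,1+\gamma]$.

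\textbf{Inductive step.} Assume $\alpha_i\in[-\gamma,1+\gamma]$. There are three cases.
\begin{enumerate}
\item If $\alpha_i\in[0,1]$, then $|\alpha^\star-\mathrm{err}_i|\le 1$ gives $\alpha_{i+1}\in[\alpha_i-\gamma(1-\alpha^\star),\,\alpha_i+\gamma\alpha^\star]$. Since $\alpha^\star\in(0,1)$, this lies inside $[-\gamma,1+\gamma]$.
\item If $\alpha_i\in[-\gamma,0)$, then $\mathrm{err}_i=0$, so $\alpha_{i+1}=\alpha_i+\gamma\alpha^\star$. This value exceeds $\alpha_i\ge-\gamma$, and it is below $\gamma\alpha^\star<1+\gamma$.
\item If $\alpha_i\in(1,1+\gamma]$, then $\mathrm{err}_i=1$, so $\alpha_{i+1}=\alpha_i-\gamma(1-\alpha^\star)$. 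This value is below $\alpha_i\le1+\gamma$, and it is above $1-\gamma(1-\alpha^\star)>-\gamma$.
\end{enumerate}

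Because the argument is pathwise, the bound holds for every realization, and hence with probability one. The step needing the most care is the key observation, namely checking that the boundary conventions really make the one-sided sets $\mathbb R$ and $\emptyset$ for each score. The arithmetic in the three cases is routine.
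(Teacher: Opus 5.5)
Your proof is correct and is essentially the argument the paper invokes by citing Lemma~4.1 of \citet{gibbs2021adaptive}: the boundary conventions force $\mathrm{err}_i=0$ when $\alpha_i<0$ and $\mathrm{err}_i=1$ when $\alpha_i>1$, and this confines the recursion to $[-\gamma,1+\gamma]$. The only difference is presentational: you run a forward induction and state the base case $\alpha_1^\pm\in[0,1]$ explicitly, whereas Gibbs and Cand\`es argue by contradiction from a first exit time. The key observation is also immediate for any finite-valued score from the set definition $\{y: s(X_i,y)\le \widehat Q(1-\alpha_i)\}$, and monotonicity of $\widehat Q$ is not actually used.
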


\begin{proof}
The proof follows verbatim the argument of Lemma 4.1 in
\citep{gibbs2021adaptive}.
\end{proof}

\begin{proposition}[Tail-specific coverage guarantees under ACI]
\label{_prop:aci_lower}
Under the same conditions as \citet{gibbs2021adaptive}, namely, that the empirical quantile functions $\widehat{Q}_{\alpha,n_{c}}^{L,U}$ are non-decreasing with $\widehat{Q}_{\alpha,n_{c}}^{L,U}(x)=-\infty$ for $x<0$ and $\widehat{Q}_{\alpha,n_{c}}^{L,U}(x)=+\infty$ for $x>1$, for a given $\alpha_1^\pm\in[0,1]$, with probability one, for all $N \in \mathbb N$,
\[
\left|
\frac{1}{N}\sum_{i=1}^N \mathrm{err}_i^{\pm}
-
\alpha^{\pm}
\right|
\le
\frac{\max\{\alpha_1^{\pm},\,1-\alpha_1^{\pm}\} + \gamma}{N\gamma}.
\]
In particular, $\frac{1}{N}\sum_{i=1}^N \mathrm{err}_i^{\pm}
\;\xrightarrow[N\to\infty]{\mathrm{a.s.}}\;
\alpha^{\pm}$.
\end{proposition}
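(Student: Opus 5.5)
The plan is to rerun the telescoping argument of \citet{gibbs2021adaptive} separately for each tail. The lower and upper recursions are fully decoupled, so it suffices to treat one tail, say the lower one; the upper tail follows by replacing $(\alpha^-,\alpha_i^-,\mathrm{err}_i^-)$ with $(\alpha^+,\alpha_i^+,\mathrm{err}_i^+)$. The only tail-specific input is that $\mathrm{err}_i^-=\mathbf 1\{Y_i\notin\mathcal C^L_{i,\alpha_i^-}\}$ is computed from the one-sided set. The one-sided set is built from a non-decreasing empirical quantile map $\widehat Q^{L}_{\alpha,n_c}$ with the stated boundary conventions. Hence $\mathcal C^L_{i,\beta}=\mathbb R$ whenever $\beta<0$, and $\mathcal C^L_{i,\beta}=\emptyset$ whenever $\beta>1$. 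These are exactly the hypotheses under which Lemma~\ref{lem:aci_lower} applies, so we may take $\alpha_i^-\in[-\gamma,1+\gamma]$ almost surely for all $i$ as given.

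Next, I will unroll the recursion $\alpha_{i+1}^-=\alpha_i^-+\gamma(\alpha^--\mathrm{err}_i^-)$ over $i=1,\dots,N$. This gives the telescoping identity
\[
\alpha_{N+1}^- - \alpha_1^- \;=\; \gamma\sum_{i=1}^N\bigl(\alpha^- - \mathrm{err}_i^-\bigr),
\qquad\text{hence}\qquad
\frac1N\sum_{i=1}^N \mathrm{err}_i^- - \alpha^- \;=\; \frac{\alpha_1^- - \alpha_{N+1}^-}{N\gamma}.
\]
It then remains to bound $|\alpha_1^- - \alpha_{N+1}^-|$. Since $\alpha_{N+1}^-\in[-\gamma,1+\gamma]$ and $\alpha_1^-\in[0,1]$, the difference is at most $\max\{\alpha_1^-+\gamma,\;1+\gamma-\alpha_1^-\}=\max\{\alpha_1^-,1-\alpha_1^-\}+\gamma$. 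This yields the stated bound, which holds pathwise on the probability-one event of Lemma~\ref{lem:aci_lower}. Letting $N\to\infty$, the right-hand side is $O(1/N)$, which gives the almost sure convergence.

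The only step needing care is confirming that Lemma~\ref{lem:aci_lower} genuinely transfers to the one-sided sets. Concretely, one must check that when $\alpha_i^->1$ the interval $\mathcal C^L_{i,\alpha_i^-}$ is empty, so that $\mathrm{err}_i^-=1$ and $\alpha^-_{i+1}$ decreases. Likewise, when $\alpha_i^-<0$ the interval is all of $\mathbb R$, so that $\mathrm{err}_i^-=0$ and $\alpha^-_{i+1}$ increases. For the scores in Table~\ref{tab:onesided_cp_scores} this follows directly from $Q=\pm\infty$ and the explicit forms $[\,\cdot - Q,\infty)$ and $(-\infty,\cdot+Q]$. Once this is checked, the remainder of the argument is the two-line telescoping computation above.
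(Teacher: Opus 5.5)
Your proposal is correct and follows essentially the same route as the paper's proof: telescope the one-sided ACI recursion to get $\frac1N\sum_{i=1}^N \mathrm{err}_i^- - \alpha^- = (\alpha_1^- - \alpha_{N+1}^-)/(N\gamma)$, then bound $|\alpha_1^- - \alpha_{N+1}^-|$ by $\max\{\alpha_1^-,1-\alpha_1^-\}+\gamma$ using $\alpha_{N+1}^-\in[-\gamma,1+\gamma]$ from the lemma. Your explicit check that the one-sided sets are empty for $\beta>1$ and equal to $\mathbb R$ for $\beta<0$ is a useful addition; the paper leaves this point implicit when it invokes the lemma.
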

\begin{proof}
The proof follows verbatim the argument of Proposition 4.1 in
\citep{gibbs2021adaptive}.  For completeness, it is reported below.\\
We present the proof for the lower-tail case; the upper-tail case follows analogously. We start from the adaptive update defining the lower-tail ACI recursion:
\[
\alpha_{i+1}^-
=
\alpha_i^- + \gamma\bigl(\alpha^- - \mathrm{err}_i^-\bigr),
\qquad i\in\mathbb N.
\]

Summing both sides from $i=1$ to $N$ yields
\[
\sum_{t=1}^N \bigl(\alpha_{t+1}^- - \alpha_t^-\bigr)
=
\sum_{t=1}^N \gamma\bigl(\alpha^- - \mathrm{err}_t^-\bigr).
\]
The left-hand side telescopes, giving
\[
\alpha_{N+1}^- - \alpha_1^-
=
\gamma \sum_{t=1}^N
\bigl(\alpha^- - \mathrm{err}_t^-\bigr).
\]

Rearranging terms, we obtain
\[
\frac{1}{T}\sum_{t=1}^N \mathrm{err}_t^-
-
\alpha^-
=
\frac{\alpha_1^- - \alpha_{T+1}^-}{N\gamma}.
\]

We now bound the numerator on the right-hand side.
By Lemma~\ref{lem:aci_lower} we have with probability one that
\[
\alpha_{N+1}^- \in [-\gamma,\,1+\gamma].
\]
 
For any $x \in [a,\,b]$ and $S\in\mathbb{R}$, we have the elementary inequality
\[
|S - x|
\le
\max\bigl\{ S - a,\ b - S \bigr\}.
\]
Applying this inequality with $x = \alpha_{N+1}^{-},  a=-\gamma, b=1+\gamma$ and $S=\alpha_1^{-}\in [0,1]$ yields

\[
\bigl|\alpha_1^- - \alpha_{N+1}^-\bigr|
\le
\max\bigl\{ \alpha_1^- + \gamma,\ (1+\gamma) - \alpha_1^- \bigr\}
=
\max\bigl\{ \alpha_1^-,\,1-\alpha_1^- \bigr\} + \gamma.
\]

Dividing both sides of previous equation by $N\gamma$ therefore gives
\[
\left|
\frac{1}{N}\sum_{t=1}^N \mathrm{err}_t^-
-
\alpha^-
\right|
\le
\frac{\max\{\alpha_1^-,\,1-\alpha_1^-\} + \gamma}{N\gamma},
\]
with probability one. This establishes the stated finite-sample bound.
Since the right-hand side converges to zero as $N\to\infty$, the almost sure
convergence follows immediately.
\end{proof}

\begin{theorem}[Global coverage guarantees under ACI]
\label{_thr:aci_total}
Under the same conditions as Proposition~\ref{_prop:aci_lower}, let
$\mathrm{err}_i^{\mathrm{tot}} := \mathrm{err}_i^- + \mathrm{err}_i^+$
and $\alpha^{\mathrm{tot}} := \alpha^- + \alpha^+$ denote the observed and target miscoverage levels of the intersection interval $\mathcal{C}_{i,\underline{\alpha}_i}^{\cap}$, obtained from two independent ACI procedures run with the 
%Consider two one-sided ACI procedures run in parallel, one for the lower tail and one for the upper tail. Define $ \mathrm{err}_i^- := \mathbf 1\{Y_i \notin \mathcal C_{i,\alpha_i^-}^{L}\}$ and $ \mathrm{err}_i^+ := \mathbf 1\{Y_i \notin \mathcal C_{i,\alpha_i^+}^{U}\}$, where $\alpha_i^-$ and $\alpha_i^+$ are the adaptive lower- and upper-tail miscoverage levels, respectively, with target levels $\alpha^-$ and $\alpha^+$.
%Assume that both ACI procedures use the 
same step size $\gamma>0$, so that $\alpha_i^\pm \in [-\gamma, 1+\gamma]$ almost surely for all $i$ (Lemma~B.1). Then, with probability one, for every $N\in\mathbb N$,
\[
\left|
\frac{1}{N}\sum_{i=1}^N \mathrm{err}_i^{\mathrm{tot}}
-
\alpha^{\mathrm{tot}}
\right|
\le
\frac{
\max\{\alpha_1^-+\alpha_1^+,\;2-(\alpha_1^-+\alpha_1^+)\}+2\gamma
}{
N\gamma
}.
\]
where $\mathrm{err}_i^{\mathrm{tot}}
:=
\mathrm{err}_i^-+\mathrm{err}_i^+$ and 
$\alpha^{\mathrm{tot}}
:=
\alpha^-+\alpha^+$. In particular, $\frac{1}{N}\sum_{i=1}^N \mathrm{err}_i^{\mathrm{tot}}
\;\xrightarrow[N\to\infty]{\mathrm{a.s.}}\;
\alpha^{\mathrm{tot}}$.
\end{theorem}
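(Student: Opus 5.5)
The plan is to sum the two one-sided ACI recursions and telescope, mirroring the proof of Proposition~\ref{_prop:aci_lower} but applied to the aggregate sequence $\alpha_i^{\mathrm{tot}} := \alpha_i^- + \alpha_i^+$. Since both procedures use the same step size $\gamma$, adding
\[
\alpha_{i+1}^- = \alpha_i^- + \gamma(\alpha^- - \mathrm{err}_i^-)
\quad\text{and}\quad
\alpha_{i+1}^+ = \alpha_i^+ + \gamma(\alpha^+ - \mathrm{err}_i^+)
\]
gives a single recursion,
\[
\alpha_{i+1}^{\mathrm{tot}} = \alpha_i^{\mathrm{tot}} + \gamma\bigl(\alpha^{\mathrm{tot}} - \mathrm{err}_i^{\mathrm{tot}}\bigr).
\]
This is exactly why the common step size is needed. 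With different $\gamma^\pm$ one would instead divide each recursion by its own step size before summing.

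Summing from $i=1$ to $N$ and telescoping yields
\[
\frac{1}{N}\sum_{i=1}^N \mathrm{err}_i^{\mathrm{tot}} - \alpha^{\mathrm{tot}} = \frac{\alpha_1^{\mathrm{tot}} - \alpha_{N+1}^{\mathrm{tot}}}{N\gamma}.
\]
It then suffices to bound $|\alpha_1^{\mathrm{tot}} - \alpha_{N+1}^{\mathrm{tot}}|$ almost surely.

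Lemma~\ref{lem:aci_lower} gives $\alpha_{N+1}^\pm \in [-\gamma, 1+\gamma]$ with probability one, so $\alpha_{N+1}^{\mathrm{tot}} \in [-2\gamma, 2+2\gamma]$. Since $\alpha_1^\pm \in [0,1]$, we have $S := \alpha_1^{\mathrm{tot}} \in [0,2]$. Applying the same elementary inequality as in the proof of Proposition~\ref{_prop:aci_lower}, namely $|S - x| \le \max\{S - a,\, b - S\}$ for $x \in [a,b]$, with $a = -2\gamma$ and $b = 2 + 2\gamma$, gives
\[
|\alpha_1^{\mathrm{tot}} - \alpha_{N+1}^{\mathrm{tot}}| \le \max\{\alpha_1^- + \alpha_1^+,\; 2 - (\alpha_1^- + \alpha_1^+)\} + 2\gamma.
\]
Dividing by $N\gamma$ produces the stated bound. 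The right-hand side is deterministic and $O(1/N)$, so almost sure convergence of the average total error to $\alpha^{\mathrm{tot}}$ follows immediately.

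There is no real obstacle; the only point needing care is interpretive. The event $\{Y_i \notin \mathcal C^{\cap}_{i,\underline{\alpha}_i}\}$ has indicator at most $\mathrm{err}_i^- + \mathrm{err}_i^+$, with equality when the two one-sided violations are disjoint, which is the case whenever $L_i \le U_i$. If the one-sided bounds cross, $\mathrm{err}_i^{\mathrm{tot}}$ can equal $2$. The theorem is stated for $\mathrm{err}_i^{\mathrm{tot}}$ as defined, so the proof needs no disjointness assumption. I would add a remark that this quantity upper-bounds the actual miscoverage of the intersection interval, so the long-run miscoverage of $\mathcal C^{\cap}$ is at most $\alpha^{\mathrm{tot}}$ asymptotically.
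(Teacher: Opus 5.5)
Your proposal is correct and follows essentially the same route as the paper's proof. Both sum the two recursions under the common step size and telescope to $(\alpha_1^{\mathrm{tot}}-\alpha_{N+1}^{\mathrm{tot}})/(N\gamma)$, then bound the numerator using Lemma~\ref{lem:aci_lower} and the elementary inequality with $a=-2\gamma$, $b=2+2\gamma$. Your closing remark is a useful clarification that the paper leaves implicit: $\mathrm{err}_i^{\mathrm{tot}}$ only upper-bounds the miscoverage indicator of $\mathcal C^{\cap}_{i,\underline{\alpha}_i}$, with equality when $L_i\le U_i$.
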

\begin{proof}
We start from the two adaptive recursions defining the one-sided ACI updates:
\[
\alpha_{i+1}^-
=
\alpha_i^- + \gamma\bigl(\alpha^- - \mathrm{err}_i^-\bigr),
\qquad
\alpha_{i+1}^+
=
\alpha_i^+ + \gamma\bigl(\alpha^+ - \mathrm{err}_i^+\bigr).
\]

Summing the two equations yields the recursion for the total adaptive level
$\alpha_i^{\mathrm{tot}} := \alpha_i^- + \alpha_i^+$:
\[
\alpha_{i+1}^{\mathrm{tot}}
=
\alpha_i^{\mathrm{tot}}
+
\gamma\Bigl(
\alpha^- + \alpha^+
-
(\mathrm{err}_i^- + \mathrm{err}_i^+)
\Bigr).
\]

Define
\[
\alpha^{\mathrm{tot}}
:=
\alpha^- + \alpha^+,
\qquad
\mathrm{err}_i^{\mathrm{toi}}
:=
\mathrm{err}_i^- + \mathrm{err}_i^+ .
\]
Then the recursion can be written compactly as
\[
\alpha_{i+1}^{\mathrm{tot}}
=
\alpha_i^{\mathrm{tot}}
+
\gamma\bigl(
\alpha^{\mathrm{tot}}
-
\mathrm{err}_i^{\mathrm{toi}}
\bigr).
\]

Summing both sides from $i=1$ to $N$ gives
\[
\sum_{i=1}^N \alpha_{i+1}^{\mathrm{tot}}
-
\alpha_i^{\mathrm{tot}}
=
\gamma \sum_{i=1}^N
\bigl(
\alpha^{\mathrm{tot}}
-
\mathrm{err}_i^{\mathrm{tot}}
\bigr).
\]
hence
\[
\alpha_{N+1}^{\mathrm{tot}} - \alpha_1^{\mathrm{tot}}
=
\gamma \sum_{i=1}^N
\bigl(
\alpha^{\mathrm{tot}}
-
\mathrm{err}_i^{\mathrm{tot}}
\bigr).
\]
Rearranging terms yields
\[
\frac{1}{N}\sum_{i=1}^N \mathrm{err}_i^{\mathrm{tot}}
-
\alpha^{\mathrm{tot}}
=
\frac{\alpha_1^{\mathrm{tot}} - \alpha_{N+1}^{\mathrm{tot}}}{N\gamma}.
\]

We now bound the numerator on the right-hand side.
By Lemma~\ref{lem:aci_lower}, we have with probability one
\[
\alpha_{N+1}^- \in [-\gamma,\,1+\gamma],
\qquad
\alpha_{N+1}^+ \in [-\gamma,\,1+\gamma].
\]
Summing these bounds gives
\[
\alpha_{N+1}^{\mathrm{tot}}
=
\alpha_{N+1}^- + \alpha_{N+1}^+
\in [-2\gamma,\,2+2\gamma]
\]
with probability one.
Let $\alpha_1^{\mathrm{tot}} = \alpha_1^- + \alpha_1^+$.
Since $\alpha_1^\pm \in [0,1]$, it follows that $\alpha_1^{\mathrm{tot}} \in [0,2]$.
For any $x \in [a,\,b]$ and $S\in\mathbb{R}$, we have
\[
|S - x|
\le
\max\bigl\{ S - a,\ b - S \bigr\}.
\]
Applying this inequality with $x = \alpha_{N+1}^{\mathrm{tot}},  a=-2\gamma, b=2+2\gamma$ and $S=\alpha_1^{\mathrm{tot}}$ yields
\[
\bigl|
\alpha_1^{\mathrm{tot}} - \alpha_{N+1}^{\mathrm{tot}}
\bigr|
\le
\max\Bigl\{
(\alpha_1^- + \alpha_1^+) + 2\gamma,\;
(2+2\gamma) - (\alpha_1^- + \alpha_1^+)
\Bigr\}
\]

Finally, dividing both sides of the previous equation by $N\gamma$ gives
\[
\left|
\frac{1}{N}\sum_{i=1}^N \mathrm{err}_i^{\mathrm{tot}}
-
\alpha^{\mathrm{tot}}
\right|
\le
\frac{
\max\Bigl\{
(\alpha_1^- + \alpha_1^+) + 2\gamma,\;
(2+2\gamma) - (\alpha_1^- + \alpha_1^+)
\Bigr\}
}{N\gamma},
\]
with probability one. This establishes the stated finite-sample bound. Since the right-hand side
vanishes as $N\to\infty$, the almost sure convergence follows immediately.
\end{proof}

\begin{proposition}[Tail-specific coverage guarantees under DtACI]
\label{_prop:dtaci_onesided_longterm}
Let $\gamma^-_{\min} := \min_j\gamma^-_j$ and $\gamma^-_{\max} := \max_j\gamma^-_j$.
Under the lower-tail DtACI procedure with time-varying parameters $\eta^-_i$
and $\sigma^-_i$, with probability one, for every $N \in \mathbb{N}$,
\[
\left|
\frac{1}{N}\sum_{i=1}^N \mathbb E[\mathrm{err}_i^-]-\alpha^-
\right|
\;\le\;
\frac{1+2\gamma^-_{\max}}{N\gamma^-_{\min}}
+
\frac{(1+2\gamma^-_{\max})^2}{\gamma^-_{\min}}
\cdot \frac1N\sum_{i=1}^N \eta^-_i e^{\eta^-_i(1+2\gamma^-_{\max})}
+
2\frac{1+\gamma^-_{\max}}{\gamma^-_{\min}}\cdot \frac1N\sum_{i=1}^N \sigma^-_i.
\]
where the expectation is over the DtACI aggregation probabilities
$\{p_i^{j,-}\}_{j=1}^k$. In particular, if $\eta_i^{-}\to 0$ and $\sigma_i^{-}\to 0$ as $i\to\infty$, then
$\frac{1}{N}\sum_{i=1}^{N}\mathrm{err}_i^{-}
\xrightarrow[N\to\infty]{\mathrm{a.s.}}\alpha^-$.
\end{proposition}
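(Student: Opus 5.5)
The plan is to transfer the long-term coverage argument of \citet{gibbs2024conformal} (their Theorem on DtACI's long-run coverage) to the lower-tail procedure. The lower-tail DtACI depends on the data only through the sequence $\beta_i^-$ and the indicators $\mathrm{err}_i^{j,-}=\mathbf 1\{\beta_i^-<\alpha_i^{j,-}\}$, so it is formally identical to the two-sided DtACI with $(\beta_i,\alpha_i^j,\gamma_j,\eta_i,\sigma_i)$ replaced by $(\beta_i^-,\alpha_i^{j,-},\gamma_j^-,\eta_i^-,\sigma_i^-)$.

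The first step is a bounded-range lemma for each expert, as in Lemma~\ref{lem:aci_lower}: $\alpha_i^{j,-}\in[-\gamma_j^-,1+\gamma_j^-]\subseteq[-\gamma^-_{\max},1+\gamma^-_{\max}]$ for all $i$. This implies that the pinball losses $\ell(\beta_i^-,\alpha_i^{j,-})$ are bounded by $1+2\gamma^-_{\max}$.

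Next I would introduce the potential $\tilde\alpha_i^-:=\sum_j p_i^{j,-}\alpha_i^{j,-}/\gamma_j^-$ and compute $\tilde\alpha_{i+1}^--\tilde\alpha_i^-$. The change has two parts. The first is the ACI part $\sum_j p_i^{j,-}(\alpha^--\mathrm{err}_i^{j,-})=\alpha^--\mathbb E[\mathrm{err}_i^-]$, where the expectation is taken over the aggregation draw. The second part comes from the change in the weights, $\sum_j(p_{i+1}^{j,-}-p_i^{j,-})\alpha_{i+1}^{j,-}/\gamma_j^-$. Summing over $i=1,\dots,N$ telescopes, so that
\[
\frac1N\sum_{i=1}^N\mathbb E[\mathrm{err}_i^-]-\alpha^-=\frac{\tilde\alpha_1^--\tilde\alpha_{N+1}^-}{N}+\frac1N\sum_{i=1}^N\sum_j(p_{i+1}^{j,-}-p_i^{j,-})\frac{\alpha_{i+1}^{j,-}}{\gamma_j^-}.
\]
The boundary term is bounded by $(1+2\gamma^-_{\max})/(N\gamma^-_{\min})$ using the range lemma.

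The main obstacle is controlling the weight-drift term, and I would again follow \citet{gibbs2024conformal}. I would split the weight update into the exponential-reweighting step and the mixing step. For the reweighting step, the bound $|e^{-\eta x}-1|\le\eta x e^{\eta x}$ with $x\le 1+2\gamma^-_{\max}$ gives $\|\tilde p-p\|_1\le 2\eta_i^-(1+2\gamma^-_{\max})e^{\eta_i^-(1+2\gamma^-_{\max})}$. After centering, each $\alpha/\gamma$ coefficient has magnitude at most $(1+2\gamma^-_{\max})/(2\gamma^-_{\min})$, so this step contributes the $\eta$-term. For the mixing step, $\|p_{i+1}-\tilde p\|_1\le 2\sigma_i^-$, and with $|\alpha/\gamma|\le(1+\gamma^-_{\max})/\gamma^-_{\min}$ this yields the $\sigma$-term. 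Keeping the constants exactly as stated will require care with the centering, for which I would mirror their bookkeeping.

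Finally, if $\eta_i^-,\sigma_i^-\to0$, the Cesàro averages vanish, so the bound goes to zero. The almost-sure statement for the realized $\mathrm{err}_i^-$ then follows from a martingale strong law, since $\mathrm{err}_i^--\mathbb E[\mathrm{err}_i^-]$ is a bounded martingale difference sequence with respect to the aggregation randomness. Under deterministic aggregation this step is not needed at all.
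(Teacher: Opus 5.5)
Your proposal is correct and follows essentially the paper's route. The paper defers to Theorem~6 of \citet{gibbs2024conformal}, and its proof of the global DtACI theorem spells out the same ingredients you use: the potential $\tilde\alpha_i^-=\sum_j p_i^{j,-}\alpha_i^{j,-}/\gamma_j^-$, the telescoping identity, and the split of the weight-drift term into a reweighting part and a mixing part. Your centering and $\ell^1$ bookkeeping reproduce the stated constants, and your martingale strong law justifies the final almost-sure claim more carefully than the paper's appeal to the law of large numbers.

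One correction to your last sentence. Under deterministic aggregation, the realized indicator $\mathbf 1\{\beta_i^-<\bar\alpha_i^-\}$ is not equal to $\sum_j p_i^{j,-}\mathrm{err}_i^{j,-}$, because the indicator is not linear in the level. So that variant is not covered by your argument at all; it is not a case where the extra step is simply unnecessary.
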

\begin{proof}
The proof follows verbatim the argument of Theorem 6 in \citep{gibbs2024conformal}.
\end{proof}

\begin{theorem}[Global coverage under DtACI]
\label{_thm:dtaci_total_common_gamma}
Let $\mathrm{err}_i^{\mathrm{tot}} := \mathrm{err}_i^- + \mathrm{err}_i^+$
and $\alpha^{\mathrm{tot}} := \alpha^- + \alpha^+$ denote the observed and target miscoverage level of the intersection interval $\mathcal{C}_{i,\underline{\alpha}_i}^{\cap}$, obtained from two independent DtACI procedures run with the same candidate
grid $\{\gamma_j\}_{j=1}^k$ with $\gamma_{\min} := \min_j \gamma_j$ and
$\gamma_{\max} := \max_j \gamma_j$, but possibly different learning-rate
sequences $\{\eta_i^{\pm}\}_{i \ge 1}$ and mixing sequences
$\{\sigma_i^{\pm}\}_{i \ge 1}$. Assume the conformity score functions are quasi-convex
in $y$. Given target levels $\alpha^-, \alpha^+ \in (0,1)$, define
\[
\tilde{\alpha}_i^{\pm} := \sum_{l=1}^k \frac{p_i^{l,\pm}\alpha_i^{l,\pm}}{\gamma_l},
\qquad
\tilde{\alpha}_i^{\mathrm{tot}} := \tilde{\alpha}_i^- + \tilde{\alpha}_i^+,
\qquad
B_0 := \max\!\left\{
  \tilde{\alpha}_1^{\mathrm{tot}} + \frac{2\gamma_{\max}}{\gamma_{\min}},\;
  \frac{2+2\gamma_{\max}}{\gamma_{\min}} - \tilde{\alpha}_1^{\mathrm{tot}}
\right\}.
\]
Then, for every $N\in\mathbb N$,
\begin{align}
\label{_eq:dtaci_total_common_gamma_bound}
\left|
  \frac{1}{N}\sum_{i=1}^{N} \mathbb{E}[\mathrm{err}_i^{\mathrm{tot}}]
  - \alpha^{\mathrm{tot}}
\right|
\;\le\;
\frac{B_0}{N}
&+
\frac{(1+2\gamma_{\max})^2}{\gamma_{\min}}
\cdot\frac{1}{N}\sum_{i=1}^{N}
\Bigl[
  \eta_i^{-}e^{\eta_i^{-}(1+2\gamma_{\max})}
  +
  \eta_i^{+}e^{\eta_i^{+}(1+2\gamma_{\max})}
\Bigr] \nonumber \\
&+
2\,\frac{1+\gamma_{\max}}{\gamma_{\min}}
\cdot\frac{1}{N}\sum_{i=1}^{N}(\sigma_i^{-}+\sigma_i^{+}),
\end{align}
where the expectation is over the DtACI aggregation probabilities
$\{p_i^{j,\pm}\}_{j=1}^k$. In particular, if $\eta_i^{\pm}\to 0$ and $\sigma_i^{\pm}\to 0$ as
$i\to\infty$, then
$\frac{1}{N}\sum_{i=1}^{N}\mathrm{err}_i^{\mathrm{tot}}
\xrightarrow[N\to\infty]{\mathrm{a.s.}}\alpha^{\mathrm{tot}}$.\\ 
When $\eta_i^- = \eta_i^+ =: \eta_i$ and $\sigma_i^- = \sigma_i^+ =: \sigma_i$
for all $i$, Eq.~\eqref{_eq:dtaci_total_common_gamma_bound} simplifies to
\[
\left|
  \frac{1}{N}\sum_{i=1}^{N} \mathbb{E}[\mathrm{err}_i^{\mathrm{tot}}]
  - \alpha^{\mathrm{tot}}
\right|
\;\le\;
\frac{B_0}{N}
+
2\,\frac{(1+2\gamma_{\max})^2}{\gamma_{\min}}
\frac{1}{N}\sum_{i=1}^N
\eta_i\,e^{\eta_i(1+2\gamma_{\max})}
+
4\,\frac{1+\gamma_{\max}}{\gamma_{\min}}
\frac{1}{N}\sum_{i=1}^N \sigma_i.
\]
\end{theorem}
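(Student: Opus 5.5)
The plan is to reduce the statement to two applications of the one-sided DtACI guarantee and add them up. For each tail, I would reproduce the key identity behind Proposition~\ref{_prop:dtaci_onesided_longterm}, i.e.\ the argument of Theorem~6 in \citet{gibbs2024conformal}, rather than just its final bound. That identity tracks the weighted potential $\tilde{\alpha}_i^{\pm}=\sum_l p_i^{l,\pm}\alpha_i^{l,\pm}/\gamma_l$. By the ACI update $\alpha_{i+1}^{l,\pm}=\alpha_i^{l,\pm}+\gamma_l(\alpha^\pm-\mathrm{err}_i^{l,\pm})$, dividing by $\gamma_l$ and averaging under $p_i^{\cdot,\pm}$ gives
\[
\tilde{\alpha}_{i+1}^{\pm}-\tilde{\alpha}_i^{\pm}=\alpha^{\pm}-\mathbb E[\mathrm{err}_i^{\pm}]+R_i^{\pm}.
\]
Here $R_i^{\pm}=\sum_l (p_{i+1}^{l,\pm}-p_i^{l,\pm})\alpha_{i+1}^{l,\pm}/\gamma_l$ is the reweighting drift. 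Gibbs et al.\ bound it by $\frac{(1+2\gamma_{\max})^2}{\gamma_{\min}}\eta_i^{\pm}e^{\eta_i^{\pm}(1+2\gamma_{\max})}+2\frac{1+\gamma_{\max}}{\gamma_{\min}}\sigma_i^{\pm}$. This uses $\alpha_i^{l,\pm}\in[-\gamma_{\max},1+\gamma_{\max}]$ (Lemma~\ref{lem:aci_lower}) and the fact that the pinball loss is bounded by $1+2\gamma_{\max}$ on that range. Since the two tails share the grid $\{\gamma_j\}$, the constants are common. The only tail-specific quantities are $\eta_i^\pm$ and $\sigma_i^\pm$.

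Next I would add the two identities. The error indicators and targets combine linearly, and $\mathbb E[\mathrm{err}_i^{\mathrm{tot}}]=\mathbb E[\mathrm{err}_i^-]+\mathbb E[\mathrm{err}_i^+]$. Telescoping from $1$ to $N$ and dividing by $N$ gives
\[
\frac1N\sum_{i=1}^N\mathbb E[\mathrm{err}_i^{\mathrm{tot}}]-\alpha^{\mathrm{tot}}=\frac{\tilde{\alpha}_1^{\mathrm{tot}}-\tilde{\alpha}_{N+1}^{\mathrm{tot}}}{N}+\frac1N\sum_{i=1}^N(R_i^-+R_i^+).
\]
The remainder terms are bounded by the sum of the two per-tail bounds, which produces the $\eta$ and $\sigma$ terms in \eqref{_eq:dtaci_total_common_gamma_bound}.

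For the boundary term, each $\tilde{\alpha}_{N+1}^{\pm}$ is a convex combination of $\alpha^{l,\pm}_{N+1}/\gamma_l$. Using the same range as above, $\alpha^{l,\pm}_{N+1}/\gamma_l\in[-\gamma_{\max}/\gamma_{\min},(1+\gamma_{\max})/\gamma_{\min}]$. Hence $\tilde{\alpha}_{N+1}^{\mathrm{tot}}\in[-2\gamma_{\max}/\gamma_{\min},(2+2\gamma_{\max})/\gamma_{\min}]$. Applying the elementary inequality $|S-x|\le\max\{S-a,b-S\}$ from the proof of Theorem~\ref{_thr:aci_total}, with $S=\tilde{\alpha}_1^{\mathrm{tot}}$, gives exactly $B_0/N$.

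If the conditions of \citet{gibbs2024conformal} are met, the almost-sure statement follows because Ces\`aro averages of $\eta_i^\pm e^{\eta_i^\pm c}\to0$ and $\sigma_i^\pm\to0$ vanish. The specialization with $\eta_i^-=\eta_i^+$ and $\sigma_i^-=\sigma_i^+$ is then immediate by doubling the terms.

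The main obstacle is bounding the drift $R_i^\pm$. I would import the bound from Gibbs et al.\ directly rather than rederive it. I would also need to check that it holds per tail with the common $\gamma_{\min},\gamma_{\max}$, which requires the range of $\alpha_i^{l,\pm}$ to be uniform across both tails.

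There is a further gap at the almost-sure convergence step. The bound controls averaged expectations over the aggregation randomness, not the realized errors. Passing to the realized errors needs a martingale strong law, since $\mathrm{err}_i^{\pm}-\mathbb E[\mathrm{err}_i^{\pm}\mid\text{past}]$ are bounded martingale differences. Alternatively, with deterministic aggregation the realized and expected errors coincide.
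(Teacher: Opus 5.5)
Your proposal follows the paper's proof essentially step for step: the per-tail identity for the potential $\tilde\alpha_i^{\pm}$ with drift $R_i^{\pm}$, the Gibbs et al.\ bound on $|R_i^{\pm}|$ (which the paper rederives rather than imports), summation over the two tails with telescoping, and the range $[-2\gamma_{\max}/\gamma_{\min},(2+2\gamma_{\max})/\gamma_{\min}]$ of $\tilde\alpha_{N+1}^{\mathrm{tot}}$ fed into $|S-x|\le\max\{S-a,b-S\}$ to get $B_0$. Your strong-law argument for the almost-sure step is sound and more careful than the paper's bare appeal to the law of large numbers; in fact, given the data the sampled indicators are independent, since the experts and weights never depend on the draw. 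However, drop the deterministic-aggregation alternative: there the realized error $\mathbf{1}\{Y_i\notin\mathcal C_{i,\bar\alpha_i}\}\in\{0,1\}$ generally differs from $\sum_j p_i^{j}\mathrm{err}_i^{j}\in[0,1]$, so the bound says nothing directly about it.
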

\begin{proof}
To make the proof easy-to-follow, we proceed by steps.
\paragraph*{Step 1}
Let $\alpha_i^{j,\pm}$ denote the level of expert
$j$ at time $i$, and let $p_i^{j,\pm}$ denote the corresponding aggregation
weights.
Let $\tilde{\alpha}^\pm_i := \sum_j \frac{p_i^{j,\pm} \alpha_i^{j,\pm}}{\gamma_j}$ and observe that

\[
\tilde{\alpha}_i^\pm
=
\sum_j
\frac{p_i^{j,\pm}\left(\alpha_{i+1}^{j,\pm} - \gamma_j(\alpha^\pm - \mathrm{err}_i^{j,\pm})\right)}{\gamma_j}
=
\sum_j \frac{p_i^{j,\pm} \alpha_{i+1}^{j,\pm}}{\gamma_j}
+
\sum_j p_i^{j,\pm}(\mathrm{err}_i^{j,\pm} - \alpha^\pm)
\]

\[
=
\tilde{\alpha}_{i+1}
+
\sum_j
\frac{(p_i^{j,\pm} - p_{i+1}^{j,\pm})\alpha_{i+1}^{j,\pm}}{\gamma_j}
+
\sum_j p_i^{j,\pm}(\mathrm{err}_i^{j,\pm} - \alpha^\pm).
\]

Thus,

\begin{equation}\label{eq_exp}
   \mathbb{E}[\mathrm{err}^\pm_i] - \alpha^\pm
=\sum_{j=1}^k p_i^{j,\pm}\mathrm{err}_i^{j,\pm}- \alpha^\pm
=
\tilde{\alpha}^\pm_i - \tilde{\alpha}^\pm_{i+1}
+
\sum_j
\frac{(\tilde{p}_{i+1}^{j,\pm} - p_i^{j,\pm})\alpha_{i+1}^{j,\pm}}{\gamma_j}. 
\end{equation}

For brevity, let's define
\begin{equation*}
\label{eq:R_side_def}
R_i^\pm
:=
\sum_{j=1}^k
\frac{(p_{i+1}^{j,\pm}-p_i^{j,\pm})\alpha_{i+1}^{j,\pm}}{\gamma_j}.
\end{equation*}
\paragraph*{Step 2}
Now, for ease of notation let

\[
W^\pm_i := \sum_j w_i^{j,\pm} \quad \text{and} \quad \tilde{p}_{i+1}^{j,\pm}
=
\frac{p_i^{j,\pm} \exp(-\eta^\pm_i \ell^\pm(\beta^\pm_i,\alpha_i^{j,\pm}))}
{\sum_j p_i^{j,\pm} \exp(-\eta^\pm_i \ell^\pm(\beta^\pm_i,\alpha_i^{j,\pm}))}.
\]

Recall that by definition,

\[
p_{i+1}^{j,\pm}
=
\frac{w_{i+1}^{j,\pm}}{\sum_j w_{i+1}^{j,\pm}}
=
(1-\sigma^\pm_i)\tilde{p}_{i+1}^{j,\pm} + \frac{\sigma^\pm_i}{k}.
\]

Moreover, by a direct computation

\[
\tilde{p}_{i+1}^{j,\pm} - p_i^{j,\pm}
=
\frac{p_i^{j,\pm} \exp(-\eta^\pm_i \ell^\pm(\beta^\pm_i,\alpha_i^{j,\pm}))}
{\sum_j p_i^{j,\pm} \exp(-\eta^\pm_i \ell^\pm(\beta^\pm_i,\alpha_i^{j,\pm}))}
-
p_i^{j,\pm}
\]

\[
=
p_i^{j,\pm}
\frac{
\exp(-\eta^\pm_i \ell^\pm(\beta^\pm_i,\alpha_i^{j,\pm}))
-
\sum_{j'} p_i^{{j',\pm}} \exp(-\eta^\pm_i \ell^\pm(\beta^\pm_i,\alpha_i^{j',\pm}))
}{
\sum_j p_i^{j,\pm} \exp(-\eta^\pm_i \ell^\pm(\beta^\pm_i,\alpha_i^{j,\pm}))
}
\]

\[
=
p_i^{j,\pm}
\frac{
\sum_{j'} p_i^{j',\pm}
\left(
\exp(-\eta^\pm_i \ell^\pm(\beta^\pm_i,\alpha_i^{j,\pm}))
-
\exp(-\eta^\pm_i \ell^\pm(\beta^\pm_i,\alpha_i^{{j',\pm}}))
\right)
}{
\sum_j p_i^{j,\pm} \exp(-\eta^\pm_i \ell^\pm(\beta^\pm_i,\alpha_i^{j,\pm}))
}
\]

\[
=
p_i^{j,\pm}
\frac{
\sum_{j'} p_i^{j',\pm}
\exp(-\eta^\pm_i \ell^\pm(\beta^\pm_i,\alpha_i^{j',\pm}))
\left(
\exp(\eta^\pm_i(\ell^\pm(\beta^\pm_i,\alpha_i^{j',\pm}) - \ell^\pm(\beta^\pm_i,\alpha_i^{i,\pm}))) - 1
\right)
}{
\sum_j p_i^{j,\pm} \exp(-\eta^\pm_i \ell^\pm(\beta^\pm_i,\alpha_i^{j,\pm}))
}
\]

\[
=
p_i^{j,\pm}
\sum_{j'} \tilde{p}_{i+1}^{j',\pm}
\left(
\exp(\eta^\pm_i(\ell^\pm(\beta^\pm_i,\alpha_i^{j',\pm}) - \ell^\pm(\beta^\pm_i,\alpha_i^{j,\pm}))) - 1
\right).
\]

By Lemma~\ref{lem:aci_lower} we know that $\alpha_i^{j,\pm} \in [-\gamma_j,1+\gamma_j]$ and thus that

\[
|\ell^\pm(\beta^\pm_i,\alpha_i^{j',\pm}) - \ell^\pm(\beta^\pm_i,\alpha_i^{j,\pm})|
\le
\max\{\alpha,1-\alpha\} |\alpha_i^{j',\pm} - \alpha_i^{j,\pm}|
\le
1 + 2\gamma_{\max}.
\]

Hence, by the mean value theorem,

\[
\left|
\exp\left(\eta^\pm_i(\ell^\pm(\beta^\pm_i,\alpha_i^{j',\pm}) - \ell^\pm(\beta^\pm_i,\alpha_i^{j,\pm}))\right) - 1
\right|
\le
\eta^\pm_i(1+2\gamma_{\max})
\exp(\eta^\pm_i(1+2\gamma_{\max})).
\]

and thus also

\[
|\tilde{p}_{i+1}^{j,\pm} - p_i^{j,\pm}|
\le
p_i^{j,\pm} \eta^\pm_i(1+2\gamma_{\max})\exp(\eta^\pm_i(1+2\gamma_{\max})).
\]

Applying the triangular inequality to $R_i^\pm$ we get:
\begin{equation}\label{tri_in}
|R_i^\pm|
:=\sum_j
\left|
\frac{(p_{i+1}^{j,\pm} - p_i^{j,\pm})\alpha_{i+1}^{j,\pm}}{\gamma_j}
\right|
\le
(1-\sigma^\pm_i)
\sum_j
\left|
\frac{(\tilde{p}_{i+1}^{j,\pm} - p_i^{j,\pm})\alpha_{i+1}^{j,\pm}}{\gamma_j}
\right|
+
\sigma_i
\sum_j
\left|
\frac{(1/k - p_i^{j,\pm})\alpha_{i+1}^{j,\pm}}{\gamma_j}
\right|
\end{equation}

Applying Lemma~\ref{lem:aci_lower} we have $\alpha_{i+1}^{j,\pm}\in[-\gamma_j,1+\gamma_j]$, which implies that
\[
\left|\frac{\alpha_{i+1}^{j,\pm}}{\gamma_j}\right|
\le
\frac{1+\gamma_j}{\gamma_j}
\le
\frac{1+\gamma_{\max}}{\gamma_{\min}}.
\]
hence, considering that $\sum_j p_i^{j,\pm}=1$ and that $(1-\sigma^\pm_i)<1$ $\forall i$ , we get a bound for the first term of the RHS of eq.\eqref{tri_in}
$$(1-\sigma^\pm_i)
\sum_j
\left|
\frac{(\tilde{p}_{i+1}^{j,\pm} - p_i^{j,\pm})\alpha_{i+1}^{j,\pm}}{\gamma_j}
\right|\le \frac{1+\gamma_{\max}}{\gamma_{\min}}\sum_i\left|
\tilde{p}_{i+1}^{j,\pm} - p_i^{j,\pm}
\right|\le \frac{\eta^\pm_i (1+2\gamma_{\max})^2}{\gamma_{\min}}
\exp(\eta^\pm_i(1+2\gamma_{\max}))$$
Considering now that both \((1/k,\dots,1/k)\) and \((p_i^{1,\pm},\dots,p_i^{k,\pm})\) are probability
vectors, their \(\ell^1\)-distance is at most \(2\):
\[
\sum_{j=1}^k \left|\frac1k-p_i^{j,\pm}\right| \le 2.
\]
hence
\[
\sum_{j=1}^k
\left|
\frac{(\frac1k-p_i^{j,\pm})\alpha_{i+1}^{j,\pm}}{\gamma_j}
\right|
\le
\frac{1+\gamma_{\max}}{\gamma_{\min}}
\sum_{j=1}^k
\left|\frac1k-p_i^{j,\pm}\right|
\le
2\frac{1+\gamma_{\max}}{\gamma_{\min}}.
\]
putting all together we can rewrite get
\begin{equation}\label{eq:dtaci_side_remainder_final}
|R_i^\pm|
:=\sum_j
\left|
\frac{(p_{i+1}^{j,\pm} - p_i^{j,\pm})\alpha_{i+1}^{j,\pm}}{\gamma_j}
\right|
\le
\frac{\eta^\pm_i (1+2\gamma_{\max})^2}{\gamma_{\min}}
\exp(\eta^\pm_i(1+2\gamma_{\max}))
+
2\sigma^\pm_i
\frac{1+\gamma_{\max}}{\gamma_{\min}}.
\end{equation}

\paragraph*{Step 3}
Summing \eqref{eq_exp} over \(s\in\{-,+\}\), and using
\[
\mathrm{err}_i^{tot}=\mathrm{err}_i^-+\mathrm{err}_i^+,
\qquad
\alpha^{tot}=\alpha^-+\alpha^+,
\qquad
\tilde\alpha_i^{tot}=\tilde\alpha_i^-+\tilde\alpha_i^+,
\]
we obtain
$$E[\mathrm{err}_i^{tot}] - \alpha^{tot}
=
\tilde\alpha_i^{tot}-\tilde\alpha_{i+1}^{tot}
+
(R_i^-+R_i^+).$$
Summing over \(i=1,\dots,N\) and noting that $\sum_{i=1}^N\tilde\alpha_i^{tot}-\tilde\alpha_{i+1}^{tot}=\tilde\alpha_1^{tot}-\tilde\alpha_{N+1}^{tot}$ we get
\[
\sum_{i=1}^N \bigl(E[\mathrm{err}_i^{tot}] - \alpha^{tot}\bigr)
=
\tilde\alpha_1^{tot}-\tilde\alpha_{N+1}^{tot}
+
\sum_{i=1}^N(R_i^-+R_i^+).
\]
Dividing by \(N\), taking absolute values and apply triangular inequality gives
\begin{equation}
\label{eq:total_modulus}
\left|
\frac1N\sum_{i=1}^N E[\mathrm{err}_i^{tot}]
-
\alpha^{tot}
\right|
\le
\frac{|\tilde\alpha_1^{tot}-\tilde\alpha_{N+1}^{tot}|}{N}
+
\frac1N\sum_{i=1}^N |R_i^-+R_i^+|.
\end{equation}

\paragraph*{Step 4}
By Lemma~\ref{lem:aci_lower} we have
\[
\tilde\alpha_i^\pm\in
\left[
-\frac{\gamma_{\max}}{\gamma_{\min}},
\frac{1+\gamma_{\max}}{\gamma_{\min}}
\right], \qquad \forall i\ge1.
\]
Which implies that
\[
\tilde\alpha_i^{tot}\in
\left[
-\frac{2\gamma_{\max}}{\gamma_{\min}},
\frac{2+2\gamma_{\max}}{\gamma_{\min}}
\right],
\qquad \forall i\ge1.
\]
In particular,
\[
\tilde\alpha_{N+1}^{tot}\in
\left[
-\frac{2\gamma_{\max}}{\gamma_{\min}},
\frac{2+2\gamma_{\max}}{\gamma_{\min}}
\right].
\]
Now recall the elementary inequality: if \(x\in[a,b]\), then for any \(S\in\mathbb R\),
\[
|S-x|\le \max\{S-a,\; b-S\}.
\]
Applying it with
\[
x=\tilde\alpha_{N+1}^{tot},
\qquad
S=\tilde\alpha_1^{tot},
\qquad
a=-\frac{2\gamma_{\max}}{\gamma_{\min}},
\qquad
b=\frac{2+2\gamma_{\max}}{\gamma_{\min}},
\]
we obtain
\begin{equation}
\label{eq:refined_telescope_bound}
|\tilde\alpha_1^{tot}-\tilde\alpha_{N+1}^{tot}|
\le
\max\left\{
\tilde\alpha_1^{tot}+\frac{2\gamma_{\max}}{\gamma_{\min}},
\;
\frac{2+2\gamma_{\max}}{\gamma_{\min}}-\tilde\alpha_1^{tot}
\right\}
=:B_0.
\end{equation}

\paragraph*{Step 5: bound on the sum of the remainders.}
By the triangle inequality,
\[
|R_i^-+R_i^+|
\le
|R_i^-|+|R_i^+|
\]
using \eqref{eq:dtaci_side_remainder_final} we get
\[
|R_i^-+R_i^+|
\le
\frac{(1+2\gamma_{\max})^2}{\gamma_{\min}}
\Bigl[
\eta_i^{-}e^{\eta_i^{-}(1+2\gamma_{\max})}
+
\eta_i^{+}e^{\eta_i^{+}(1+2\gamma_{\max})}
\Bigr]
\]
\[
\hspace{2.6cm}
+
2\,\frac{1+\gamma_{\max}}{\gamma_{\min}}(\sigma_i^-+\sigma_i^+).
\]
Averaging over \(i=1,\dots,N\), we obtain
\begin{equation}
\label{eq:remainder_average_bound}
\frac1N\sum_{i=1}^N |R_i^-+R_i^+|
\le
\frac{(1+2\gamma_{\max})^2}{\gamma_{\min}}
\frac1N\sum_{i=1}^N
\Bigl[
\eta_i^{-}e^{\eta_i^{-}(1+2\gamma_{\max})}
+
\eta_i^{+}e^{\eta_i^{+}(1+2\gamma_{\max})}
\Bigr]
\end{equation}
\[
\hspace{2.6cm}
+
2\,\frac{1+\gamma_{\max}}{\gamma_{\min}}
\frac1N\sum_{i=1}^N(\sigma_i^-+\sigma_i^+).
\]

\paragraph*{Step 6}
Combining \eqref{eq:total_modulus}, \eqref{eq:refined_telescope_bound}, and
\eqref{eq:remainder_average_bound}, we obtain
\[
\left|
\frac1N\sum_{i=1}^N E[\mathrm{err}_i^{tot}]
-
\alpha^{tot}
\right|
\le
\frac{B_0}{N}
+
\frac{(1+2\gamma_{\max})^2}{\gamma_{\min}}
\frac1N\sum_{i=1}^N
\Bigl[
\eta_i^{-}e^{\eta_i^{-}(1+2\gamma_{\max})}
+
\eta_i^{+}e^{\eta_i^{+}(1+2\gamma_{\max})}
\Bigr]
\]
\[
\hspace{2.6cm}
+
2\,\frac{1+\gamma_{\max}}{\gamma_{\min}}
\frac1N\sum_{i=1}^N(\sigma_i^-+\sigma_i^+),
\]
which is exactly \eqref{_eq:dtaci_total_common_gamma_bound}.
\paragraph*{Step 7: conclusion.}
Finally, if $\eta_i^{\pm}\to 0$ and $\sigma_i^{\pm}\to 0$ as $i \to \infty$, then
\[
\frac{1}{N}\sum_{i=1}^N \eta_i^{\pm}\exp\!\bigl(\eta_i^{\pm}(1+2\gamma_{\max})\bigr)\xrightarrow[N\to\infty]{}0,
\qquad
\frac{1}{N}\sum_{i=1}^N \sigma_i^{\pm}\xrightarrow[N\to\infty]{}0,
\]
which implies that
\[
\lim_{N\to0}\frac{1}{N}\sum_{i=1}^N|\mathrm{err}_i^{\mathrm{tot}}- (\alpha^-+\alpha^+)|=\lim_{N\to0}\frac{1}{N}\sum_{i=1}^N |\mathbb{E}[\mathrm{err}_i^{\mathrm{tot}}]- (\alpha^-+\alpha^+)|=0
\]
where the first equality follows from the law of large numbers.
This completes the proof.
\end{proof}

\begin{proposition}[Tail-specific short-term bounds under DtACI]
\label{_prop:dtaci_onesided_regret}
Let $\{\alpha_i^{*,-}\}_{i \in I}$ be any comparator sequence of oracle miscoverage levels, representing the best adaptive strategy in hindsight over the interval $I$. Let $\gamma^-_{\max} := \max_{1 \le j \le k}\gamma^-_j$
and assume $\gamma^-_1 < \gamma^-_2 < \cdots < \gamma^-_k$ with
$\gamma^-_{j+1}/\gamma^-_j \le 2$ for all $1 < j < k$,
$\gamma^-_k \ge \sqrt{1+1/|I|}$, $\sigma = 1/(2|I|)$,
$\gamma^-_1 \le \sqrt{(\sum_{i=r+1}^s
|\alpha_i^{*,-}-\alpha_{i-1}^{*,-}|+1)/|I|}$, and
$\eta^- = \sqrt{(\log(k|I|)+2)/
\sum_{i=r}^s\mathbb{E}[\ell(\beta_i^-,\alpha_i^-)^2]}$.
Then, 
\begin{enumerate}

\item \textbf{(Dynamic regret)} For any interval $I=[r,s]\subset[N]$,
\begin{align*}
\frac{1}{|I|}\sum_{i=r}^s \mathbb{E}[\ell(\beta^-_i,\alpha^-_i)]
-
\frac{1}{|I|}\sum_{i=r}^s \ell(\beta^-_i,\alpha_i^{*,-})
&\le
2\sqrt{\frac{\log(k|I|)+2}{|I|}}
\sqrt{\frac{1}{|I|}\sum_{i=r}^s\mathbb{E}[\ell(\beta^-_i,\alpha^-_i)^2]}\\
&\quad+
4(1+\gamma^-_{\max})^2
\sqrt{\frac{\sum_{i=r+1}^s|\alpha_i^{*,-}-\alpha_{i-1}^{*,-}|+1}{|I|}}\\
&=
O\!\left(\sqrt{\frac{\log|I|}{|I|}}\right)
+
O\!\left(\sqrt{\frac{\sum_{i=r+1}^s|\alpha_i^{*,-}-\alpha_{i-1}^{*,-}|}{|I|}}\right),
\end{align*}
where the expectation is over the DtACI aggregation probabilities
$\{p_i^{j,-}\}_{j=1}^k$.

\item \textbf{(Coverage deviation)} Additionally assume $\beta^-$ has density $p^-(\cdot)$ on $[0,1]$ with $p^-(x) \ge p_- > 0$, and let $\alpha_i^{\star,-}$ satisfy $\mathbb{P}(Y_i \in \mathcal{C}^L_{i,\alpha_i^{\star,-}}
\mid \{\beta_u^-\}_{u<i}) = 1-\alpha^-$. Then, %combining part~(i) with the bound $|\alpha_i^- - \alpha_i^{\star,-}|^2 \le (2/p^-)\bigl(\mathbb{E}[\ell(\beta^-,\alpha_i^-)] - \mathbb{E}[\ell(\beta^-,\alpha_i^{\star,-})]\bigr)$,
\begin{equation}
\label{_eq:onesided_mse_from_regret}
\frac{1}{|I|}\sum_{i=r}^s
\frac{p_-}{2}\,\mathbb{E}\!\bigl[(\alpha_i^--\alpha_i^{\star,-})^2\bigr]
\;\le\;
O\!\left(\sqrt{\frac{\log|I|}{|I|}}\right)
+
O\!\left(\sqrt{\frac{1}{|I|}\sum_{i=r+1}^s
\mathbb{E}|\alpha_i^{*,-}-\alpha_{i-1}^{*,-}|}\right),
\end{equation}
where the expectation is over $\{p_i^{j,-}\}_{j=1}^k$ and
$\{\beta^-_i\}_{i \le s}$.
\end{enumerate}
\end{proposition}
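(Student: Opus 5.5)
The plan is to reduce both parts to the dynamic-regret and coverage-deviation results for DtACI in \citet{gibbs2024conformal} (their Theorem~3 and the corollary following it). The lower-tail procedure is exactly DtACI run on the one-sided family $\{\mathcal C^L_{i,\beta}\}_{\beta}$ with the scalar feedback $\beta_i^-$. The only structural inputs used there are three. First, the expert levels stay bounded, $\alpha_i^{j,-}\in[-\gamma_j,1+\gamma_j]$, which is Lemma~\ref{lem:aci_lower} applied to each expert. Second, the pinball loss $\ell(\beta,\cdot)$ is convex and Lipschitz with constant $\max\{\alpha^-,1-\alpha^-\}\le 1$. Third, the miscoverage indicator satisfies $\mathrm{err}_i^{j,-}=\mathbf 1\{\beta_i^-<\alpha_i^{j,-}\}$. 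I will verify this last identity first. Because the one-sided sets are nested and monotone in the level (the empirical quantile $\widehat Q^{L}$ is non-decreasing in $1-\beta$), we have $Y_i\in\mathcal C^L_{i,\beta}$ if and only if $\beta\le\beta_i^-$. Once this is checked, the problem is formally identical to the two-sided one, with $\beta_i$ replaced by $\beta_i^-$.

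\textbf{Part 1.} I would follow the standard two-layer argument. The first layer is the exponential-weights step with fixed share. With $\sigma=1/(2|I|)$, fixed-share guarantees regret against the best single expert $j^\star$ on any interval $I$ of order
\[
\sqrt{\tfrac{\log(k|I|)+2}{|I|}}\sqrt{\tfrac{1}{|I|}\textstyle\sum\mathbb E[\ell^2]}.
\]
The stated choice of $\eta^-$ balances the $\log$ term against the second-moment term. The second layer bounds the dynamic regret of a single ACI expert, which is online gradient descent on the pinball loss with step $\gamma_j$. Against any comparator sequence $\alpha^{*,-}_i$, this regret is bounded by
\[
\frac{(1+\gamma_j)^2}{\gamma_j|I|}\Bigl(1+\textstyle\sum|\alpha_i^{*,-}-\alpha_{i-1}^{*,-}|\Bigr)+\gamma_j,
\]
using the boundedness from Lemma~\ref{lem:aci_lower}. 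The grid conditions ($\gamma_1$ small, $\gamma_k\ge\sqrt{1+1/|I|}$, ratio at most $2$) guarantee an expert whose $\gamma_j$ lies within a factor $2$ of the optimizer $\sqrt{(\text{path length}+1)/|I|}$. This yields the $4(1+\gamma_{\max})^2\sqrt{\cdot}$ term. Adding the two layers gives the stated bound, and the $O(\cdot)$ form follows because $\ell$ is bounded.

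\textbf{Part 2.} I would take the comparator to be $\alpha_i^{*,-}=\alpha_i^{\star,-}$ and condition on $\{\beta_u^-\}_{u<i}$. The conditional expected loss $g(a)=\mathbb E[\ell(\beta^-,a)\mid\cdot]$ has derivative $\mathbb P(\beta^-<a\mid\cdot)-\alpha^-$. This derivative vanishes at $\alpha_i^{\star,-}$ by the defining coverage condition. The density lower bound $p_-$ then makes $g$ strongly convex with modulus $p_-$ on $[0,1]$, so
\[
g(\alpha_i^-)-g(\alpha_i^{\star,-})\ge\tfrac{p_-}{2}(\alpha_i^--\alpha_i^{\star,-})^2.
\]
Taking expectations, averaging over $I$ and applying Part~1 gives \eqref{_eq:onesided_mse_from_regret}.

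\textbf{Main obstacle.} I expect the hardest step to be handling $\alpha_i^-$ outside $[0,1]$, where the density assumption gives no curvature. I would first try to show that on $[-\gamma,0)\cup(1,1+\gamma]$ the function $g$ is linear with slope of magnitude at least $\min\{\alpha^-,1-\alpha^-\}$. Such excursions would then still be controlled up to constants, which can be absorbed into the $O(\cdot)$ terms.
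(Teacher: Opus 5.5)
Your proposal is correct and follows essentially the same route as the paper. For Part~1 the paper simply cites the DtACI dynamic-regret theorem of \citet{gibbs2024conformal}; you additionally check the reduction via $\mathrm{err}_i^{j,-}=\mathbf 1\{\beta_i^-<\alpha_i^{j,-}\}$ and sketch the fixed-share plus online-gradient-descent argument. For Part~2 both you and the paper combine Part~1 with the density-induced quadratic lower bound $\mathbb E[\ell(\beta^-,\tau)]-\mathbb E[\ell(\beta^-,\alpha^{\star,-})]\ge\frac{p_-}{2}(\tau-\alpha^{\star,-})^2$, which the paper takes from Proposition~5 of \citet{gibbs2024conformal}.

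Your extra treatment of levels $\alpha_i^-\in[-\gamma,0)\cup(1,1+\gamma]$ is a legitimate refinement that the paper does not address. There the inequality with constant $p_-/2$ can genuinely fail (e.g.\ for uniform $\beta^-$). Your linear-slope argument repairs this at the cost of constants that are absorbed by the $O(\cdot)$ terms.
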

\begin{proof}
Result~(1) follows from Theorem~4 of \citep{gibbs2024conformal}. We now prove result~(2). By Proposition~5 of \citep{gibbs2024conformal}, we have
\begin{equation}
\mathbb E[\ell(\beta^-,\tau^-)]
-
\mathbb E[\ell(\beta^-,\alpha^{\star,-})]
\;\ge\;
\frac{p_-}{2}(\tau^--\alpha^{\star,-})^2.
\label{one_side_alpha_ineq}
\end{equation}
Combining \eqref{one_side_alpha_ineq} with result~(1) yields:
\begin{equation}
\frac{1}{|I|}
\sum_{i=r}^s
\frac{p_-}{2}
\mathbb E\!\left[
(\alpha_i^- - \alpha_i^{\star,-})^2
\right]
\;\le\;
O\!\left(\sqrt{\frac{\log(|I|)}{|I|}}\right)
+
O\!\left(
\sqrt{
\frac{1}{|I|}
\sum_{i=r+1}^s
\mathbb{E}\!\left[
\left|
\alpha_i^{*,-}-\alpha_{i-1}^{*,-}
\right|
\right]}
\right)
\end{equation}
where the expectation is over $\{p_i^{j,-}\}_{j=1}^k$ and
$\{\beta^-_i\}_{i \le s}$.
\end{proof}

\begin{theorem}[Global short-term bound under DtACI]
\label{_thr:pinball_to_alpha_total_improved}
Let $\alpha_i^{\mathrm{tot}} := \alpha_i^- + \alpha_i^+$ and
$\alpha_i^{*,\mathrm{tot}} := \alpha_i^{*,-} + \alpha_i^{*,+}$ denote the
adaptive and oracle total miscoverage levels for the intersection interval
$\mathcal{C}_{i,\underline{\alpha}_i}^{\cap}$, obtained from two independent
DtACI procedures run in parallel under the same conditions as
Proposition~\ref{_prop:dtaci_onesided_regret}, with conformity score functions
quasi-convex in $y$. Assume $\beta^\pm$ has density $p^\pm(\cdot)$ on
$[0,1]$ satisfying $p^\pm(x) \ge p_\pm > 0$, and that there exists
$\alpha^{\star,\pm}$ with $\mathbb{P}(\beta^\pm < \alpha^{\star,\pm}) =
\alpha^\pm$. Define $p_{\min} := \min\{p_-, p_+\}$ and
$G_i^\pm := \mathbb{E}[\ell(\beta^\pm,\alpha_i^\pm)] -
\mathbb{E}[\ell(\beta^\pm,\alpha_i^{*,\pm})]$.
Then, for every $i$,
\begin{equation}
\label{_eq:prop_total_improved_simpler}
(\alpha_i^{\mathrm{tot}}-\alpha_i^{*,\mathrm{tot}})^2
\;\le\;
\frac{4}{p_{\min}}\bigl(G_i^-+G_i^+\bigr).
\end{equation}
Furthermore, for any interval $I=[r,s]\subset[N]$,
\begin{align}
\label{_eq:short_term_total_like6_improved}
\frac{1}{|I|}\sum_{i=r}^s
\frac{p_{\min}}{4}\,
\mathbb{E}\!\left[(\alpha_i^{\mathrm{tot}}-\alpha_i^{*,\mathrm{tot}})^2
\right]
&\le
O\!\left(\sqrt{\frac{\log|I|}{|I|}}\right) \nonumber \\
&+
O\!\Bigg(
\sqrt{\frac{1}{|I|}\sum_{i=r+1}^s
\mathbb{E}|\alpha_i^{*,-}-\alpha_{i-1}^{*,-}|}
+
\sqrt{\frac{1}{|I|}\sum_{i=r+1}^s
\mathbb{E}|\alpha_i^{*,+}-\alpha_{i-1}^{*,+}|}
\Bigg),
\end{align}
where the expectation is over $\{p_i^{j,\pm}\}_{j=1}^k$ and
$\{\beta_i^\pm\}_{i \le s}$. If additionally the map $\underline{\alpha}
\mapsto \mathbb{P}(Y_i \in \mathcal{C}_{i,\underline{\alpha}}^{\cap} \mid
\{\beta_s^\pm\}_{s<i})$ is $\mathcal{L}_{\cap}$-Lipschitz in
$\alpha^{\mathrm{tot}} = \alpha^- + \alpha^+$, then
\eqref{_eq:short_term_total_like6_improved} also bounds the local coverage
deviation of $\mathcal{C}_{i,\underline{\alpha}_i}^{\cap}$, a condition
reasonable whenever the conformity-score distributions and their quantile
estimates vary smoothly with the miscoverage level.
\end{theorem}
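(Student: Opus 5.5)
The plan is to reduce the statement to the one-sided results already proved. The pointwise inequality \eqref{_eq:prop_total_improved_simpler} comes from a Cauchy--Schwarz step combined with the quadratic-growth bound used in the proof of Proposition~\ref{_prop:dtaci_onesided_regret}. The interval bound \eqref{_eq:short_term_total_like6_improved} then follows by averaging and invoking the dynamic-regret part of that proposition once for each tail.

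\textbf{Step 1 (one-sided quadratic growth).} For each tail, apply Proposition~5 of \citet{gibbs2024conformal}, exactly as in \eqref{one_side_alpha_ineq}. The pinball loss satisfies
\[
\mathbb E[\ell(\beta^\pm,\tau)]-\mathbb E[\ell(\beta^\pm,\alpha^{\star,\pm})]\ge \tfrac{p_\pm}{2}(\tau-\alpha^{\star,\pm})^2 ,
\]
because the expected pinball loss is convex with derivative $\mathbb P(\beta^\pm<\tau)-\alpha^\pm$. By the density lower bound, this derivative grows at rate at least $p_\pm$ away from its zero $\alpha^{\star,\pm}$. This is the step where the comparator must be identified with the population minimizer $\alpha^{\star,\pm}$ (equivalently, $\alpha_i^{*,\pm}$ is taken to be the oracle level). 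The inequality holds with $\alpha_i^{*,\pm}$ in place of $\alpha^{\star,\pm}$ only under that identification, so I would state explicitly that $G_i^\pm$ is measured against the minimizer. It follows that $(\alpha_i^\pm-\alpha_i^{*,\pm})^2\le \frac{2}{p_{\min}}G_i^\pm$.

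\textbf{Step 2 (combine tails).} Apply $(a+b)^2\le 2a^2+2b^2$ with $a=\alpha_i^--\alpha_i^{*,-}$ and $b=\alpha_i^+-\alpha_i^{*,+}$. This gives
\[
(\alpha_i^{\mathrm{tot}}-\alpha_i^{*,\mathrm{tot}})^2\le 2\cdot\tfrac{2}{p_{\min}}(G_i^-+G_i^+),
\]
which is \eqref{_eq:prop_total_improved_simpler}. No independence between the two DtACI runs is needed here. Quasi-convexity only serves to guarantee that the intersection is an interval, so that $\alpha_i^{\mathrm{tot}}$ is the meaningful total level.

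\textbf{Step 3 (interval average).} Take expectations over $\{p_i^{j,\pm}\}$ and $\{\beta_i^\pm\}_{i\le s}$ and average over $i\in I$. The left side becomes $\frac{1}{|I|}\sum_i\frac{p_{\min}}{4}\mathbb E[(\alpha_i^{\mathrm{tot}}-\alpha_i^{*,\mathrm{tot}})^2]$, and the right side becomes $\frac1{|I|}\sum_i(G_i^-+G_i^+)$. Each tail's average is bounded by part~(1) of Proposition~\ref{_prop:dtaci_onesided_regret}, after replacing the conditional losses with their expectations via the tower property. The two $O(\sqrt{\log|I|/|I|})$ terms merge into one, and the two variation terms remain separate. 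Concavity of the square root (Jensen) moves the expectation inside the variation term.

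\textbf{Step 4 (coverage deviation).} The final clause is a Lipschitz transfer. Under the stated assumption,
\[
|\mathbb P(Y_i\in\mathcal C^{\cap}_{i,\underline\alpha_i}\mid\cdot)-\mathbb P(Y_i\in\mathcal C^{\cap}_{i,\underline\alpha_i^*}\mid\cdot)|\le\mathcal L_\cap|\alpha_i^{\mathrm{tot}}-\alpha_i^{*,\mathrm{tot}}|.
\]
Squaring and averaging then reuses Step~3 with the constant multiplied by $\mathcal L_\cap^2$.

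The main obstacle is Step~1. Relating the regret comparator $\alpha_i^{*,\pm}$, which is an arbitrary sequence, to the minimizer of the expected loss is what the quadratic-growth lemma requires. I expect to resolve it by specializing the comparator to the conditional oracle levels, as Proposition~\ref{_prop:dtaci_onesided_regret}(2) implicitly does.
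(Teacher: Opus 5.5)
Your proposal is correct and follows essentially the paper's route: per-tail quadratic growth from Proposition~5 of \citet{gibbs2024conformal}, then combining the two tails (your $(a+b)^2\le 2a^2+2b^2$ is the same inequality as the paper's triangle inequality followed by $(\sqrt{a}+\sqrt{b})^2\le 2(a+b)$), then the one-sided dynamic-regret bound applied to each tail. Your explicit identification of the comparator $\alpha_i^{*,\pm}$ with the oracle level is the same identification the paper makes implicitly when it invokes \eqref{one_side_alpha_ineq} with $\alpha_i^{*,\pm}$, and your Lipschitz-transfer step supplies an argument for the final clause, which the paper states without proof.
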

\begin{proof}
Let's start from the identity $\alpha_i^{tot}-\alpha_i^{*,tot}
=
(\alpha_i^- - \alpha_i^{*,-})
+
(\alpha_i^+ - \alpha_i^{*,+})$. Taking absolute values gives $|\alpha_i^{tot}-\alpha_i^{*,tot}|
\le
|\alpha_i^- - \alpha_i^{*,-}|
+
|\alpha_i^+ - \alpha_i^{*,+}|$. From \eqref{one_side_alpha_ineq}, we have for each side
\[
(\alpha_i^\pm-\alpha_i^{*,\pm})^2
\le
\frac{2}{p_\pm}
\left(
\mathbb{E}[\ell(\beta^\pm,\alpha_i^\pm)]
-
\mathbb{E}[\ell(\beta^\pm,\alpha_i^{*,\pm})]
\right)
=
\frac{2G_i^\pm}{p_\pm}.
\]
Hence $|\alpha_i^\pm-\alpha_i^{*,\pm}|
\le
\sqrt{\frac{2G_i^\pm}{p_\pm}}$. Applying the triangle inequality yields $|\alpha_i^{tot}-\alpha_i^{*,tot}|
\le
\sqrt{\frac{2G_i^-}{p_-}}
+
\sqrt{\frac{2G_i^+}{p_+}}$. Squaring both sides, we obtain 
\begin{equation*}
\label{eq:prop_total_improved}
(\alpha_i^{tot}-\alpha_i^{*,tot})^2
\le
\left(
\sqrt{\frac{2G_i^-}{p_-}}
+
\sqrt{\frac{2G_i^+}{p_+}}
\right)^2 .
\end{equation*} 
Finally, the bound \eqref{_eq:prop_total_improved_simpler}
follows by observing that if $a,b \geq0$, then $\left(\sqrt{a}+\sqrt{b}\right)^2
\le
2(a+b)$\footnote{
Indeed,
\(
(\sqrt{a}+\sqrt{b})^2 = a+b+2\sqrt{ab}
\le a+b+(a+b)=2(a+b),
\)
since \(2\sqrt{ab}\le a+b\) (which can be proved by expanding the inequality $\left(\sqrt{a}-\sqrt{b}\right)^2 \geq 0$).
} and using $p_{\min}\le p_-,p_+$.

Combining \eqref{_eq:prop_total_improved_simpler} with result~(1) of Proposition~\ref{_prop:dtaci_onesided_regret}, applied separately to the two tails, yields, for any interval $I=[r,s]\subset[N]$,
\begin{align*}
\frac{1}{|I|}
\sum_{i=r}^s
\frac{p_{\min}}{4}
\mathbb{E}\!\left[
(\alpha_i^{tot}-\alpha_i^{*,tot})^2
\right]
&\le
O\!\left(
\sqrt{\frac{\log(|I|)}{|I|}}
\right)
\nonumber\\
&+
O\!\Bigg(
\sqrt{
\frac{1}{|I|}
\sum_{i=r+1}^s
\mathbb{E}
\bigl|
\alpha_i^{*,-}-\alpha_{i-1}^{*,-}
\bigr|
}+
\sqrt{
\frac{1}{|I|}
\sum_{i=r+1}^s
\mathbb{E}
\bigl|
\alpha_i^{*,+}-\alpha_{i-1}^{*,+}
\bigr|
}
\Bigg).
\end{align*}
where the expectation is over $\{p_i^{j,\pm}\}_{j=1}^k$ and
$\{\beta_i^\pm\}_{i \le s}$.
\end{proof}

\section{Additional Simulation experiments}

\begin{table}[htbp]
\centering
\caption{
Empirical coverage and width of compared methods (non-exchangeable setting),
reported as mean $\pm$ standard deviation over $R = 500$ MC runs. All CP intervals are constructed using the DtACI framework. Global miscoverage level is set at $\alpha = 0.1$, with tail-specific levels equal to $\alpha/2$. Bold font flags global undercoverage, tail undercoverage, or extreme tail overcoverage.
}
\label{tab:cp_faci_ar1}
\scriptsize
\setlength{\tabcolsep}{2.7pt}
\renewcommand{\arraystretch}{1.1}

\begin{tabular}{llcccccc}
\hline
\textbf{Scenario} & \textbf{Method} &
$\widehat{\mathrm{Cov}}$ &
$\widehat{\mathrm{Cov}}_L$ &
$\widehat{\mathrm{Cov}}_U$ &
Mean width &
Median width \\
\hline

%=========================================================
% Normal AR(1)
%=========================================================
Normal AR(1)
& Benchmark
& $\bm{0.898 \pm 0.006}$ & $\bm{0.949 \pm 0.004}$ & $\bm{0.949 \pm 0.004}$
& $3.278 \pm 0.060$ & $3.283 \pm 0.052$ \\
& $\mathcal C_{n+1,\alpha}^{\rm res}$
& $0.902 \pm 0.002$ & $0.951 \pm 0.003$ & $0.951 \pm 0.003$
& $3.336 \pm 0.054$ & $3.304 \pm 0.056$ \\
& $\mathcal C_{n+1,\underline{\alpha}}^{\rm res,\cap}$
& $0.903 \pm 0.002$ & $0.952 \pm 0.001$ & $0.952 \pm 0.001$
& $3.377 \pm 0.059$ & $3.331 \pm 0.057$ \\
& $\mathcal C_{n+1,\alpha}^{\rm s\text{-}res}$
& $0.901 \pm 0.002$ & $0.951 \pm 0.003$ & $0.951 \pm 0.003$
& $3.380 \pm 0.078$ & $3.297 \pm 0.057$ \\
& $\mathcal C_{n+1,\underline{\alpha}}^{\rm s\text{-}res,\cap}$
& $0.903 \pm 0.002$ & $0.952 \pm 0.001$ & $0.952 \pm 0.001$
& $3.463 \pm 0.151$ & $3.324 \pm 0.059$ \\
& $\mathcal C_{n+1,\alpha}^{\rm q}$
& $0.902 \pm 0.002$ & $0.951 \pm 0.003$ & $0.951 \pm 0.003$
& $3.343 \pm 0.055$ & $3.301 \pm 0.057$ \\
& $\mathcal C_{n+1,\underline{\alpha}}^{\rm q,\cap}$
& $0.905 \pm 0.004$ & $0.953 \pm 0.003$ & $0.953 \pm 0.003$
& $3.377 \pm 0.060$ & $3.305 \pm 0.051$ \\
& $\mathcal C_{n+1,\underline{\alpha}}^{\rm q\text{-}sgn,\cap}$
& $0.903 \pm 0.002$ & $0.952 \pm 0.001$ & $0.952 \pm 0.001$
& $3.384 \pm 0.059$ & $3.326 \pm 0.058$ \\[4pt]

%=========================================================
% Student-t AR(1)
%=========================================================
Student-$t$ AR(1)
& Benchmark
& $0.909 \pm 0.007$ & $0.955 \pm 0.005$ & $0.955 \pm 0.004$
& $4.224 \pm 0.148$ & $4.231 \pm 0.133$ \\
& $\mathcal C_{n+1,\alpha}^{\rm res}$
& $0.902 \pm 0.002$ & $0.951 \pm 0.003$ & $0.951 \pm 0.003$
& $4.125 \pm 0.094$ & $4.071 \pm 0.098$ \\
& $\mathcal C_{n+1,\underline{\alpha}}^{\rm res,\cap}$
& $0.903 \pm 0.002$ & $0.952 \pm 0.001$ & $0.952 \pm 0.001$
& $4.213 \pm 0.123$ & $4.121 \pm 0.096$ \\
& $\mathcal C_{n+1,\alpha}^{\rm s\text{-}res}$
& $0.901 \pm 0.002$ & $0.951 \pm 0.003$ & $0.951 \pm 0.003$
& $4.185 \pm 0.113$ & $4.058 \pm 0.097$ \\
& $\mathcal C_{n+1,\underline{\alpha}}^{\rm s\text{-}res,\cap}$
& $0.903 \pm 0.002$ & $0.952 \pm 0.001$ & $0.952 \pm 0.001$
& $4.312 \pm 0.182$ & $4.110 \pm 0.097$ \\
& $\mathcal C_{n+1,\alpha}^{\rm q}$
& $0.902 \pm 0.002$ & $0.951 \pm 0.003$ & $0.951 \pm 0.003$
& $4.139 \pm 0.094$ & $4.066 \pm 0.098$ \\
& $\mathcal C_{n+1,\underline{\alpha}}^{\rm q,\cap}$
& $0.914 \pm 0.006$ & $0.957 \pm 0.004$ & $0.957 \pm 0.004$
& $4.338 \pm 0.158$ & $4.250 \pm 0.129$ \\
& $\mathcal C_{n+1,\underline{\alpha}}^{\rm q\text{-}sgn,\cap}$
& $0.903 \pm 0.002$ & $0.952 \pm 0.001$ & $0.952 \pm 0.001$
& $4.222 \pm 0.121$ & $4.115 \pm 0.097$ \\[4pt]

%=========================================================
% Skew-t AR(1) DtACI
%=========================================================
Skew-$t$ AR(1)
& Benchmark
& $\bm{0.937 \pm 0.006}$ & $\bm{0.937 \pm 0.006}$ & $\bm{1.000 \pm 0.000}$
& $3.020 \pm 0.172$ & $3.023 \pm 0.153$ \\
& $\mathcal C_{n+1,\alpha}^{\rm res}$
& $0.902 \pm 0.002$ & $\bm{0.902 \pm 0.002}$ & $\bm{1.000 \pm 0.001}$
& $2.251 \pm 0.083$ & $2.168 \pm 0.098$ \\
& $\mathcal C_{n+1,\underline{\alpha}}^{\rm res,\cap}$
& $0.903 \pm 0.003$ & $0.952 \pm 0.001$ & $0.952 \pm 0.002$
& $2.662 \pm 0.103$ & $2.580 \pm 0.084$ \\
& $\mathcal C_{n+1,\alpha}^{\rm s\text{-}res}$
& $0.901 \pm 0.002$ & $\bm{0.902 \pm 0.002}$ & $\bm{0.999 \pm 0.001}$
& $2.308 \pm 0.104$ & $2.158 \pm 0.092$ \\
& $\mathcal C_{n+1,\underline{\alpha}}^{\rm s\text{-}res,\cap}$
& $0.903 \pm 0.003$ & $0.952 \pm 0.001$ & $0.952 \pm 0.002$
& $2.755 \pm 0.150$ & $2.573 \pm 0.086$ \\
& $\mathcal C_{n+1,\alpha}^{\rm q}$
& $0.902 \pm 0.002$ & $\bm{0.903 \pm 0.002}$ & $\bm{0.999 \pm 0.001}$
& $2.281 \pm 0.089$ & $2.174 \pm 0.091$ \\
& $\mathcal C_{n+1,\underline{\alpha}}^{\rm q,\cap}$
& $0.949 \pm 0.002$ & $0.950 \pm 0.002$ & $\bm{1.000 \pm 0.000}$
& $3.285 \pm 0.146$ & $3.194 \pm 0.125$ \\
& $\mathcal C_{n+1,\underline{\alpha}}^{\rm q\text{-}sgn,\cap}$
& $0.904 \pm 0.003$ & $0.952 \pm 0.001$ & $0.952 \pm 0.003$
& $2.686 \pm 0.200$ & $2.580 \pm 0.086$ \\

\hline
\end{tabular}
\end{table}

Tables~\ref{tab:cp_faci_ar1} reports the results obtained under the DtACI framework in the non-exchangeable AR(1) setting. Overall, these results confirm the patterns observed for ACI in the main text. Both adaptive aggregation procedures achieve satisfactory global coverage across the Gaussian and Student-$t$ scenarios, while preserving accurate tail-specific calibration for the one-sided intersection intervals. Relative to ACI, DtACI produces very similar intervals, with only a mild increase in width in most scenarios.

The skewed Student-$t$ setting confirms the main conclusions drawn from the simulation experiments based on the ACI procedure. Standard two-sided CP intervals and the benchmark remain miscalibrated at the tail level, displaying lower-tail undercoverage and upper-tail overcoverage. In contrast, the proposed one-sided intersection procedures restore tail-specific calibration by treating the two tails separately. This pattern is stable across ACI and DtACI. As in the results obtained for the ACI procedure, the residual-based intersection method provides a favorable compromise between coverage and efficiency, whereas the quantile-max construction remains highly conservative on the upper tail, with upper-tail coverage equal to one.

\subsection{Technical details on the overcoverage of the $s^{{\rm q}}$ score}

This section complements the results discussion in
 Section~\ref{sec: sim_studies}, giving a technical explanation of the extreme upper-tail overcoverage observed for $\mathcal{C}_{n+1,\underline{\alpha}}^{\rm q,\cap}$ under skewed data. 

The observed extreme overcoverage (equal to $1$) is a direct consequence of the truncation specified by the one-sided quantile scores used to construct $\mathcal C_{n+1,\underline{\alpha}}^{\rm q,\cap}$. Indeed, these
scores are defined as
\[
s^{{\rm q},{\rm L}}(x,y)
=
\max\!\left\{
\hat q_{\alpha,n_{t}}(x)-y,0
\right\},
\qquad
s^{{\rm q},{\rm U}}(x,y)
=
\max\!\left\{
y-\hat q_{1-\alpha,n_{t}}(x),0
\right\}.
\]
Hence, the corresponding calibration quantiles
$Q_{\alpha,n_{c}}^{{\rm q},{\rm L}}$ and
$Q_{\alpha,n_{c}}^{{\rm q},{\rm U}}$ are always non-negative. As shown in Section~\ref{sec:onesided_split_cp}, the induced one-sided bounds are
\begin{equation}\label{LUMAX}
L^{\rm q}(x)
=
\hat q_{\alpha,n_{t}}(x)
-
Q_{\alpha,n_{c}}^{{\rm q},{\rm L}},
\qquad
U^{\rm q}(x)
=
\hat q_{1-\alpha,n_{t}}(x)
+
Q_{\alpha,n_{c}}^{{\rm q},{\rm U}}.
\end{equation}

In negatively skewed distributions, observations are much more frequently located
below the upper conditional quantile $\hat q_{1-\alpha,n_{t}}(x)$. Consequently, for the upper-tail score, many calibration observations satisfy
$y \le \hat q_{1-\alpha,n_{t}}(x)$ and therefore produce many
$s^{{\rm q},{\rm U}}(x,y)=0$. 

The presence of many zeroes upper-tail scores generates ties. Hence, the almost-sure distinctness assumption required for the finite-sample upper coverage bounds in Theorem~\ref{_thm:two_sided_coverage} is no longer satisfied. This makes extreme upper-tail overcoverage possible and is consistent with the empirical behavior of $\mathcal C_{n+1,\underline{\alpha}}^{\rm q,\cap}$, whose upper-tail coverage is equal to one. We now explain why this overcoverage is so severe.

The concentration of upper-tail scores at zero lowers the empirical calibration
quantile $Q_{\alpha,n_{c}}^{{\rm q},{\rm U}}$. However, because of the
truncation induced by the maximum operator, this quantile can decrease at most
to zero and can never become negative. As a result, the upper endpoint in \eqref{LUMAX} is bounded from below by the upper conditional quantile
$\hat q_{1-\alpha,n_{t}}(x)$. This restriction prevents the upper bound $U^{\rm q}(x)$ from moving sufficiently downward in settings where observations rarely exceed the upper conditional quantile. The truncated construction therefore yields an upper bound that
is systematically too conservative under strong negative skewness, which explains
the empirical upper-tail coverage equal to one. In contrast, the non-truncated
score $s^{{\rm q\text{-}sgn},{\rm U}}(x,y)
=
y-\hat q_{1-\alpha,n_{t}}(x)$ can take negative values when
$y<\hat q_{1-\alpha,n_{t}}(x)$, allowing its calibration quantile to be
negative and, consequently, allowing the upper endpoint of $\mathcal C_{n+1,\underline{\alpha}}^{\rm q\text{-}sgn,\cap}$ to move below $\hat q_{1-\alpha,n_{t}}(x)$. 

\section{Additional Results on Upper-Tail Coverage Dynamics}
Figure~\ref{fig:upper_coverage} reports the average empirical upper-tail coverage over time for SPY, TQQQ, and XLE. Consistently with the lower-tail analysis presented in the main text, both the benchmark GARCH-$t$ model and the classical two-sided conformal procedures exhibit systematic directional miscalibration, producing persistent overcoverage relative to the nominal upper-tail target level $\alpha^+$. By contrast, the proposed one-sided conformal procedures achieve substantially improved alignment with the desired upper-tail coverage level across all considered assets. Overall, these results further support the importance of independently calibrating the two tails, especially in asymmetric and nonstationary financial environments where lower- and upper-tail risks may evolve differently over time.

%=========================================================
% MULTIPANEL — UPPER COVERAGE
%=========================================================
\begin{figure}[htbp]
\centering

\includegraphics[
width=0.72\textwidth
]{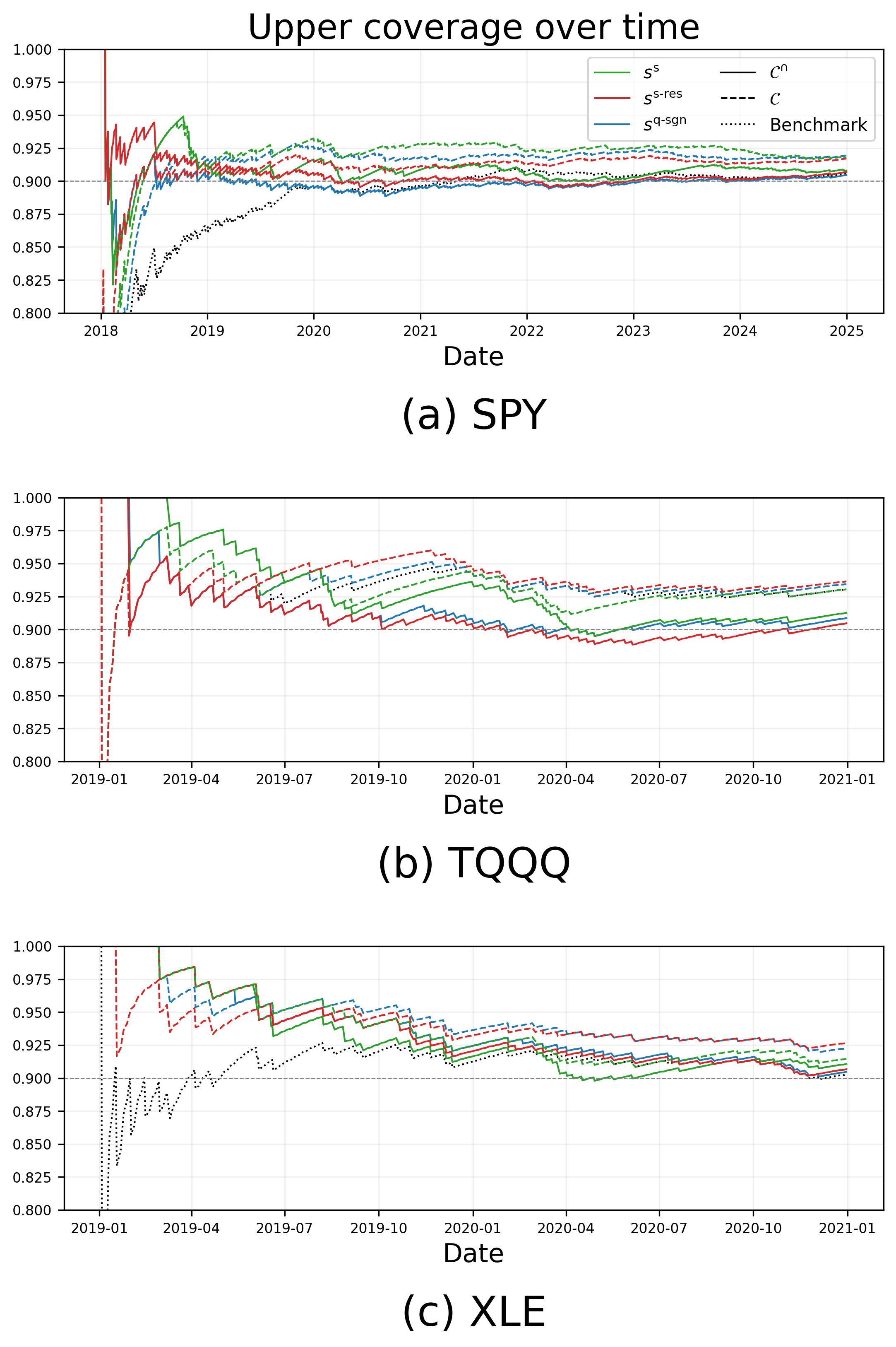}

\caption{
Comparison of upper-tail coverage over time for SPY, TQQQ, and XLE obtained using the benchmark forecast model GARCH-$t$ in isolation and in combination with the CP procedures under the residual $s^{\text{res}}$ (res), s-residual $s^{\text{s-res}}$ (s-res), and quantile scores $s^{\text{q-sgn}}$ (q-sgn).}
\label{fig:upper_coverage}

\end{figure}

\end{document}